\definecolor{jaune}{RGB}{0,0,0}
\definecolor{violet2}{RGB}{0,0,0}
\definecolor{vert}{RGB}{0,0,0}
\definecolor{rouge}{RGB}{0,0,0}
\definecolor{bleu}{RGB}{0,0,0}
\definecolor{orange2}{RGB}{0,0,0}
\definecolor{violet}{RGB}{0,0,0}
\definecolor{menthe}{RGB}{0,0,0}
\numberwithin{equation}{section}
\newtheorem{theoreme}{Theorem}[section]
\newtheorem{proposition}[theoreme]{Proposition}
\newtheorem{corollary}[theoreme]{Corollary}
\newtheorem{remarque}[theoreme]{Remark}
\newtheorem{lemme}[theoreme]{Lemma}
\newtheorem{definition}[theoreme]{Definition}
\newtheorem{notation}[theoreme]{Notation}
\newenvironment{proof}[1][Proof]{\noindent \textbf{#1.}~ }
{\hfill\rule{2mm}{2mm} \vspace{\parskip} }
\newcommand{\RR}{\ensuremath{\mathbb R}}
\newcommand{\PP}{\ensuremath{\mathbb P}}
\newcommand{\ZZ}{\ensuremath{\mathbb Z}}
\newcommand{\EE}{\ensuremath{\mathbb E}}
\newcommand{\NN}{\ensuremath{\mathbb N}}
\newcommand{\CL}{\ensuremath{\mathcal L}}
\newcommand{\T}{\ensuremath{\mathcal T}}
\newcommand{\F}{\ensuremath{\mathcal F}}
\newcommand{\G}{\ensuremath{\mathcal G}}
\newcommand{\xdashrightarrow}[2][]{\ext@arrow 0359\rightarrowfill@@{#1}{#2}}
\newcommand{\wT}{\ensuremath{\widetilde{\mathcal T}}}
\newcommand{\wSigma}{\ensuremath{\widetilde{\Sigma}}}
\newcommand{\wtau}{\ensuremath{\widetilde{\tau}}}
\newcommand{\A}{\ensuremath{\mathcal{A}}}
\newcommand{\1}{\mathds{1}}
\newcommand{\De}{\Delta}
\newcommand{\de}{\delta}
\newcommand{\si}{\sigma}
\tikzset{snake it/.style={decorate, decoration=snake}}
\title{Recursive games: Uniform value, Tauberian theorem and the Mertens conjecture ``$Maxmin=\lim v_n=\lim v_\lambda$''}
\author{Xiaoxi LI \thanks{CNRS, IMJ-PRG, UMR 7586, Sorbonne Universit\'es, UPMC Univ. Paris 06, Univ. Paris Diderot, Sorbonne Paris Cit\'e, Paris, France. Email: xxleewhu@gmail.com.},
\  Xavier VENEL \thanks{CES, Universit\'e Paris 1 Panth\'eon Sorbonne, Paris. France. Email: xavier.venel@univ-paris1.fr}}
\date{May 30, 2015}
\begin{document}
\maketitle
\tableofcontents

\newpage

\begin{abstract} 
We study two-player zero-sum recursive games with a countable state space and finite action spaces at each state. When the family of $n$-stage values $\{v_n,n\geq 1\}$ is totally bounded for the uniform norm, we prove the existence of the uniform value. Together with a result in Rosenberg and Vieille \cite{Rosenberg_2000}, we obtain a uniform Tauberian theorem for recursive game: $(v_n)$ converges uniformly if and only if $(v_\lambda)$ converges uniformly.

We apply our main result to finite recursive games with signals (where players observe only signals on the state and on past actions). When the maximizer is more informed than the minimizer, we prove the Mertens conjecture $Maxmin=\lim_{n\to\infty} v_n=\lim_{\lambda\to0}v_\lambda$. Finally, we deduce the existence of the uniform value in finite recursive game with symmetric information.
\end{abstract}

\noindent \textbf{Keywords}: \ Stochastic games, recursive games, asymptotic value, uniform value, Tauberian theorem, maxmin
 

\section {Introduction}

Stochastic games were introduced by Shapley \cite{Shapley_53} to model a multiplayer dynamic interaction, where players' collective decisions influence the current payoff and also the future state. In this article, we focus on two-player zero-sum recursive games introduced by Everett \cite{Everett_57}. The specificity of a recursive game is that the state space is divided into two sets: absorbing states and active states. On absorbing states, the process is absorbed and the payoff is fixed. On active (non-absorbing) states, the payoff is always equal to $0$.

There are several ways to evaluate the payoff stream in a zero-sum stochastic game. Given a positive integer $n$, the $n$-stage payoff is the expected average payoff during the first $n$ stages. Given $\lambda\in(0,1]$, the $\lambda$-discounted payoff is the Abel mean of the infinite stage payoffs with a weight $\lambda(1-\lambda)^{t-1}$ for stage $t$. We will focus on the concept of uniform value. A stochastic game admits a uniform value if both players can approximately guarantee the same payoff level in all sufficiently long $n$-stage games without knowing \textit{a priori} the length of the game.  

Mertens and Neyman \cite{Mertens_81} proved that a stochastic game with a finite state space {\color{violet2} and finite set of actions} where the players observe the current state and the stage payoffs admits a uniform value. Their proof uses the fact that  the function $\lambda\mapsto v_\lambda$ has bounded variation, where $v_\lambda$ is the $\lambda$-discounted value (Bewley and Kohlberg \cite{Bewley_76a}). For stochastic games with an infinite state space, this argument in general does not apply.

Markovian decision processes (henceforth MDP) are stochastic games with only one player. Lehrer and Sorin \cite{Lehrer_92} showed that in a MDP, the uniform convergence of $(v_\lambda)$ (w.r.t. the initial state) as $\lambda$ tends to zero is equivalent to the uniform convergence of the $n$-stage values $(v_n)$ as $n$ tends to infinity. Nevertheless, uniform convergence of $(v_n)$ or $(v_\lambda)$ is not sufficient for the existence of the uniform value (cf. Monderer and Sorin \cite{Monderer_93} {\color{vert} or} Lehrer and Monderer \cite{Lehrer_94}).

For recursive games, the situation seems to be different. There are two results giving sufficient conditions for {\color{vert}a} recursive game with countable state space to have a uniform value. The first one can be derived from Rosenberg and Vieille \cite{Rosenberg_2000}: if {\color{vert} $(v_\lambda)$  converges uniformly to some function $v$, then the recursive game has a uniform value, which is equal to $v$. The second one is due to Solan and Vieille \cite{Solan_2002}: if, except on a finite subset, the \textit{limsup value}\footnote{The limsup value is the value of the game in which the global payoff to player 1 is the limsup of the stage payoff stream.} is above a strictly positive constant on the non-absorbing states, then the recursive game has a uniform value, which is equal to the limsup value.\\

\noindent The main result of this paper is that the uniform convergence of the $n$-stage values is a sufficient condition for the existence of the uniform value. In fact we prove a stronger result: for any recursive game with countable state space, if the family {\color{vert} $\{v_n,n\geq 1\}$} is totally bounded for the uniform norm, then the {\color{vert} uniform value} exists.  {\color{jaune} Our proof follows the same idea as Solan and Vieille \cite{Solan_2002} and we will use several of their results}.

Our result together with the result of Rosenberg and Vieille \cite{Rosenberg_2000} provides a uniform Tauberian theorem for recursive games: $(v_n)$ converges uniformly if and only if $(v_\lambda)$ converges uniformly, and in case of convergence, both limits are the same. \textcolor{rouge}{For general stochastic games, Ziliotto \cite{Ziliotto_2015} provided recently a direct proof \textcolor{violet2}{of this result}.}

Finally, we apply our main result to finite {\color{orange2} {\color{rouge} recursive games with signals}}. In a {\color{orange2} {\color{rouge} recursive game with signals}}, players do not perfectly observe the state and actions at every stage anymore, rather they receive a private signal. Mertens \cite{Mertens_87} conjectured that in a general model of zero-sum repeated games, if player 1 (the maximizer) is always more informed than player 2 (the minimizer) during the play (in the sense that player 2's private signal can be deduced from player 1's private signal) then $Maxmin=\lim_{n\to\infty}v_n=\lim_{\lambda\to 0}v_\lambda$, $i.e.$, both the uniform maxmin and the asymptotic value exist and are equal. 

{\color{rouge} Ziliotto \cite{Ziliotto_2013} showed that the result is false in general. Nevertheless, several positive results have been obtained for subclasses of games including} Sorin \cite{Sorin_84} and Sorin \cite{Sorin_85b} for Big match with one-sided incomplete information, Rosenberg \textit{et al}. \cite{Rosenberg_2004}, Renault \cite{Renault_2012a} and Gensbittel \textit{et al}. \cite{Gensbittel_2014} for a more informed controller, and Rosenberg and Vieille \cite{Rosenberg_2000} for recursive games with one-sided incomplete information. 

We prove the { Mertens conjecture in finite {\color{orange2} {\color{rouge} recursive games with signals}}, where player 1 is always more informed than player $2$ during the play. The proof uses several results from Gensbittel \textit{et al}. \cite{Gensbittel_2014}, concerning the $n$-stage value functions in a repeated game where player 1 is more informed than player 2. Our result generalizes Rosenberg and Vieille \cite{Rosenberg_2000}, which deals with the model where player 1 is informed of a private signal on the state at the beginning of the game. {\color{vert} Moreover, we deduce the existence of the uniform value in finite recursive games with symmetric information. } 
 
The organization of the article is as follows: in Section \ref{sec:2} we introduce the model of recursive games; in Section \ref{sec:3} we present the main result and several corollaries; Section \ref{sec:4} is dedicated to the proofs; finally in Section \ref{sec:5} we apply the result to finite {\color{orange2} {\color{rouge} recursive games with signals}}.\\ 

\section{Preliminaries: model and notations} \label{sec:2}

\textbf{Notation} \  Given any metric space $S$, endowed with the Borelian $\sigma$-algebra, we denote by $\Delta(S)$ the set of probabilities on $S$ and we denote by $\Delta_f(S)$ the set of probabilities with finite support.

\subsection{The model}


A two-player zero sum stochastic game $\Gamma=\langle X,A,B,g,q\rangle$ is given by
\begin{itemize}
\item a state space $X$.
\item player 1's action set $A$, and for any $x\in X$, $A(x)$ is a finite subset of $A$.
\item player 2's action set $B$, and for any $x\in X$, $B(x)$ is a finite subset of $B$.
\item a  payoff function: $g: X\times A\times B \to [-1,+1]$.
\item a transition probability function: $q$: $X \times A \times B\to\Delta_f(X)$. 
\end{itemize}

\noindent \textbf{Play of the game}  \ The stochastic game with initial state $x_1\in X$ is denoted by $\Gamma(x_1)$, and {\color{vert} is} played as follows: 
at each stage
$t\geq 1$, after observing $(x_1,a_1,b_1,...$ $...,a_{t-1},b_{t-1},x_t)$, player $1$ and player $2$ choose simultaneously
actions $a_t \in A(x_t)$  and $b_t \in B(x_t)$. The stage payoff is $g(x_t,a_t,b_t)$ and a new state $x_{t+1}$ is drawn
according to the probability distribution $q(x_t,a_t,b_t)$. Both players observe the action pair $(a_t,b_t)$ and the state
$x_{t+1}$. The game {\color{vert} then} proceeds to stage $t+1$.\\ 

\noindent Note that we did not make any measurability assumption on the model. As the transition probability distribution is supposed to be finitely supported, given an initial state, the set of actions and states that might appear in the infinite game are in fact countable. Therefore probability distributions are well defined.   \\

\noindent \textbf{Recursive game} \ $\Gamma$ is a recursive game if there {\color{bleu} exist a set of active states denoted by $X^0$ and a set of absorbing states denoted by $X^*$ with $X^0\cup X^*=X$ and $X^0\cap X^*=\emptyset$,} such that:
\begin{itemize}
\item the stage payoff is $0$ on active states: $\forall x \in X^0$, $g(x,a,b)=0, \forall (a,b)\in A(x)\times B(x)$;
\item states in $X^*$ are absorbing: $\forall x\in X^*$, $q(x,a,b)(x)=1$, $\forall (a,b)\in A(x)\times B(x)$, and $g(x,a,b)$ depends only on $x$.
\end{itemize}


\subsection{Definition of strategies and evaluations}

\textbf{History}\ \  At stage $t$, the space of finite histories is
$H_{t}=(X \times A \times B)^{t-1}\times X$. Set
$H_{\infty}=(X\times A \times B)^{\infty}$  to be the space of infinite
\emph{plays}. We consider the discrete topology on $X$, $A$ and $B$. For every $t\geq 1$, we identify any $h_t\in H_t$ with a cylinder set in $H_\infty$ and denote by $\mathcal{H}_t$ the $\sigma$-field of $H_t$ induced on $H_\infty$. The product $\sigma$-field on $H_\infty$ is $\mathcal{H}_\infty=\sigma(\mathcal{H}_t,t\geq 1)$.\\
 
\noindent \textbf{Strategy}\ \  A \emph{(behavior) strategy} for player $1$ is a sequence of functions $\sigma=(\sigma_t)_{t
\geq 1}$ with each $t\geq 1$, $\sigma_t: (H_t, \mathcal{H}_t) \rightarrow \Delta(A)$ such that for every $h_t\in H_t$, $\sigma_t(h_t)(A(x_t))=1$. If for every $t\geq 1$ and $h_t \in H_t$, there exists $a\in A(x_t)$ such that $\sigma_t(h_t)[a]=1$, then the strategy is \emph{pure}. {\color{bleu} We define similarly} a behavior strategy $\tau$ for player 2. Denote by $\Sigma$ and $\T$ respectively player 1's and player 2's sets of behavior strategies. {\color{jaune} Denote by $\widehat{\Sigma}$ and $\widehat{\T}$ respectively player 1's and player 2's subsets of strategies that depend on the histories only through the states but not on the actions.}\\
 
\noindent \textbf{Evaluations}\   \  Let us describe several ways to evaluate the payoff in $\Gamma$. By Kolmogorov's extension theorem, any triple $(x_1, \sigma,\tau)\in X\times \Sigma\times \T$ induces a unique probability distribution over $(H_\infty, \mathcal{H}_\infty)$ denoted by $\PP_{x_1,\sigma,\tau}$. Let $\EE_{x_1,\sigma,\tau}$ be the corresponding expectation. \\

\noindent \textit{n-stage average} \ For each positive $n\geq 1$, the expected average payoff up to stage $n$, induced by the couple of strategies $(\sigma,\tau)$ and the initial state $x_1$ is given by
\[
\gamma_n(x_1,\sigma,\tau)=\EE_{x_1,\sigma,\tau}\left(\frac{1}{n} \sum_{t=1}^n g(x_t,a_t,b_t)\right).
\]
The game with expected $n$-stage average payoff and initial state $x_1$ is denoted as $\Gamma_n(x_1)$.\\

\noindent \textit{$\lambda$-discounted average}  \ For each  $\lambda \in (0,1]$, the expected $\lambda$-discounted average payoff, induced by the couple of strategies $(\sigma,\tau)$ and the initial state $x_1$ is given by
\[
\gamma_\lambda(x_1,\sigma,\tau)=\EE_{x_1,\sigma,\tau}\left(\lambda
\sum_{t=1}^\infty (1-\lambda)^{(t-1)} g(x_t,a_t,b_t)\right).
\]
The game with expected $\lambda$-discounted average payoff and initial state $x_1$ is denoted as $\Gamma_\lambda(x_1).$\\

\noindent In either $\Gamma_n(x_1)$ or $\Gamma_\lambda(x_1)$, player 1 maximizes the expected average payoff and player 2 minimizes it. For a fixed $x_1$ the game $\Gamma_n(x_1)$ is finite, so there exists a value $v_n(x_1)$ by {\color{vert} minmax theorem}. The existence of the discounted value $v_\lambda(x_1)$ is also standard, and we refer to Mertens {\it et al.} \cite{Mertens_2015} (Section VII.1.) for a general presentation.

\subsection{Stopping time and concatenation of strategies}
 

%

A function $\theta:(H_\infty,\mathcal{H}_\infty) \rightarrow \NN$ is called a \emph{stopping time} if the set $\{h \in H_\infty| \theta(h) =t \}$ is  {$\mathcal{H}_t$-measurable} for all $t\geq 1$. Explicitly for any $h,h'\in H_\infty$ and $n\geq 1$: if $h$ and $h'$ coincide until stage $n$ and $\theta(h)= n$ then $\theta(h')=n$. Let $\theta$ and $\theta'$ be two stopping times, we write $\theta \leq \theta'$ if for every $h\in H_\infty$, $\theta(h) \leq \theta'(h)$. 

Given a sequence of strategies $(\sigma^{[\ell]})_{\ell\geq 1}$ and a sequence of increasing stopping time $(\theta_\ell)_{\ell\geq 1}$, we define $\sigma^*:=\sigma^{[1]}\theta_1\sigma^{[2]}\theta_2\cdot\cdot\cdot$ as the \emph{concatenation} of $(\sigma^{[\ell]})_{\ell\geq 1}$ along $(\theta_\ell)_{\ell\geq 1}$. {\color{orange2} Given $n\geq t\geq 1$ and $h\in H_\infty$, let $h_n$ be the projection of $h$ on $H_n$ and $h_n^t$ be the history of $h$ between stage $t$ and $n$. The strategy $\sigma^*$ is defined by $\sigma^*_n\big(h_n\big)=\sigma_n^{[1]}(h_n)$ if $n<\theta_1(h)$; $\sigma^*_n(h_n)=\sigma_{n-\theta_{m-1}}^{[m]}(h_n^{\theta_{m-1}})$ if $\theta_{m-1}\leq n< \theta_m$. Informally, for every {\color{rouge} $\ell\geq 1$} at stage $\theta_\ell$, the player forgets the past and starts to play $\sigma_{\ell+1}$ at the current state.}

\subsection{Uniform value}

\noindent \textbf{Uniformly guarantee}\  Player 1 \textit{uniformly guarantees} $w$ if for every $\varepsilon>0$, there exists $\sigma_\varepsilon$ in $\Sigma$ and $N_0\geq 1$ such that for every $x_1\in X^0$,
\begin{align*}
 \gamma_n(x_1,\sigma_\varepsilon,\tau) \geq w(x_1)-\varepsilon, \ \  \forall n\geq N_0, \ \forall \tau\in \T.
\end{align*} 
We say that the strategy $\sigma_\varepsilon$ uniformly guarantees $w-\varepsilon$.  Similarly, player 2 uniformly guarantees $w$ if for every $\varepsilon>0$, there exists $\tau_\varepsilon$ in $\T$ and $N_0\geq 1$ such that for every $x_1\in X^0$,
 \begin{align*}
 \gamma_n(x_1,\sigma,\tau_\varepsilon) \leq w(x_1)+\varepsilon, \ \ \forall n\geq N_0, \ \forall \sigma\in \Sigma.
\end{align*}

\noindent \textbf{Uniform value}\ $v_\infty:X\to\RR$ is the uniform value of the game $\Gamma$ if both players uniformly guarantee $v_\infty$. A strategy for player 1 ($resp.$ player 2) that uniformly guarantees $v_\infty-\varepsilon$ ($resp.$ \  $v_\infty+\varepsilon$) is called \emph{uniform $\varepsilon$-optimal}. If both players can uniformly guarantee $v_\infty$ with pure strategies, $\Gamma$ has \emph{a uniform value in pure strategies}. 

\begin{remarque}  \label{rem:uniformcon} In defining the uniform value, we ask $N_0$ to be independent of the initial state $x_1$. One direct consequence of the existence of the uniform value $v_\infty$ is the uniform convergence of $(v_n)_{n\geq 1}$ to $v_\infty$. This is stronger than the definition where  the existence of the uniform value is considered state by state (see for example Solan and Vieille \cite{Solan_2002}, Definitions 3-4)

\end{remarque}
 
%
 
\section{Main results} \label{sec:3}
In this section, we present the main result of the paper, namely Theorem \ref{theo1}, as well as several corollaries. We also provide an example that does not satisfy the condition of Theorem \ref{theo1} and {\color{vert}{does not have}} a uniform value.

\subsection{Sufficient condition for the existence of the uniform value}

Denote by ${\bf B}(X)$ the set of functions from $X$ to $[-1,1]$ with the uniform norm $\|\cdot\|_\infty$. Recall that a set of functions $F$ in $({\bf B}(X),\|.\|_\infty)$ is \emph{totally bounded} if for every $\varepsilon>0$, there exists a finite subset $F_R=\{f_r: 1\leq r\leq R\} \subseteq F$ such that for any $f\in F$, there is $f_r\in F_R$ with $||f-f_r||_\infty\leq \varepsilon.$
\begin{theoreme}\label{theo1}
Suppose that the space $\{v_n, n\geq 1\}$ is totally bounded for the uniform norm, then the recursive game $\Gamma$ has a uniform value $v_\infty$. Moreover both players can uniformly guarantee $v_\infty$ with strategies that depend only on the history of states and not on past actions.
\end{theoreme}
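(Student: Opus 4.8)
\noindent\emph{Sketch of the intended proof.} The plan is to first single out the candidate value as an accumulation point of $(v_n)$, show that it solves the Shapley equation, and then build state-based $\varepsilon$-optimal strategies for both players by concatenating the resulting one-shot optimal strategies along stopping times adapted to the trajectory of states. First I would record the dynamic-programming recursion $v_n(x)=\mathrm{val}_{A(x)\times B(x)}\big[\tfrac{n-1}{n}\sum_y q(x,a,b)(y)\,v_{n-1}(y)\big]$ for $x\in X^0$ (using $g\equiv0$ on active states) and $v_n(x)=g(x)$ for $x\in X^*$, together with the standard estimate $\|v_{n+1}-v_n\|_\infty\le C/n$. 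Total boundedness makes $\overline{\{v_n\}}$ compact in $({\bf B}(X),\|\cdot\|_\infty)$, so some subsequence satisfies $v_{n_k}\to v_\infty$ uniformly, and since consecutive differences vanish, $v_{n_k-1}\to v_\infty$ as well. Passing to the limit in the recursion, and using that $\mathrm{val}$ and the averaging $\sum_y q(x,a,b)(y)(\cdot)$ are $1$-Lipschitz for $\|\cdot\|_\infty$, yields the Shapley equation $v_\infty(x)=\mathrm{val}_{A(x)\times B(x)}\big[\sum_y q(x,a,b)(y)\,v_\infty(y)\big]$ on $X^0$ with $v_\infty=g$ on $X^*$. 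Finiteness of the action sets then provides, at each $x$, stationary one-shot optimal mixed actions $\bar\sigma(x),\bar\tau(x)$ in the matrix game with entries $\sum_y q(x,a,b)(y)v_\infty(y)$; these are state-based, which is what the last sentence of the theorem requires.

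With $v_\infty$ in hand I would reduce both guarantees to a forcing problem. Playing the stationary $\bar\tau$ makes $(v_\infty(x_t))_t$ a bounded supermartingale under any $\sigma$ (by optimality of $\bar\tau$ in each matrix game and $g\equiv0$ on $X^0$), and symmetrically $\bar\sigma$ turns it into a submartingale. Since payoff is collected only at absorption, where $g=v_\infty$, optional stopping controls the expected absorbing payoff in terms of $v_\infty(x_1)$, up to a correction $\EE_{x_1,\sigma,\bar\tau}\!\big[v_\infty(x_\infty)\,\1_{\{\text{not absorbed}\}}\big]$ carried entirely by plays that are never absorbed: a Cesàro average counts non-absorption as $0$, whereas the (super/sub)martingale bound may attribute a nonzero value of $v_\infty$ to it. Hence the stationary strategy already yields the required bound whenever the supermartingale renders the non-absorbed part harmless; the genuine issue is to force absorption at a good state on the complementary region, which for player $1$ is roughly where $v_\infty$ stays positive and for player $2$ where it stays negative.

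The core difficulty, and the step I expect to be hardest, is exactly this forcing on $\{v_\infty\ge\delta\}$ for player $1$ (and, symmetrically, $\{v_\infty\le-\delta\}$ for player $2$). Here I would follow Solan and Vieille: total boundedness supplies a finite $\varepsilon$-net of $\{v_n\}$, hence only finitely many relevant levels of $v_\infty$, and I would let player $1$ play in blocks — using a finite-horizon optimal strategy of $\Gamma_N$ for a \emph{single} large $N$ valid at every starting state, again by total boundedness — concatenated along stopping times that record when the trajectory of states crosses a level of $v_\infty$. One shows that under such play either $v_\infty(x_t)$ climbs a level or the play absorbs at a state whose payoff is within $\varepsilon$ of the current level, while never-absorbed plays carry vanishing weight; on the positive region this is precisely the hypothesis of the Solan--Vieille theorem on recursive games whose limsup value is bounded below by a positive constant off a finite set, which I would invoke after truncating to a finite set carrying most of the mass, to obtain the uniform, state-indexed lower bound. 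The decisive point is that all stopping times depend on the history of states only, so the constructed $\sigma_\varepsilon$ lies in $\widehat{\Sigma}$ and $\tau_\varepsilon$ in $\widehat{\T}$.

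Combining the regions, each player uniformly guarantees $v_\infty$ up to $\varepsilon$, so $v_\infty$ is the uniform value. Since any accumulation point of $(v_n)$ would then be guaranteed by both players and therefore equal the value, $(v_n)$ converges uniformly to $v_\infty$, consistently with Remark \ref{rem:uniformcon}, and the strategies produced are state-based as claimed. I expect essentially all the real work — and the only place where the hypothesis is used beyond extracting $v_\infty$ and its one-shot optimal actions — to sit in the forcing construction of the previous paragraph, and in matching up the positive, negative and near-zero regions into a single pair of strategies with a common threshold $N_0$.
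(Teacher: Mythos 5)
Your overall architecture (extract uniform limit points via total boundedness, pass to the limit in the Shapley recursion, turn $v$ into a sub/supermartingale with stationary one-shot optimal actions, then concatenate finite-horizon blocks along state-measurable stopping times) matches the paper's. But the step you yourself flag as decisive --- forcing absorption on the region where the target stays positive --- is not actually carried out, and the way you propose to discharge it does not work. You invoke the Solan--Vieille theorem, whose hypothesis concerns the \emph{limsup value} (the value of the game whose payoff is the limsup of the stage payoffs) being bounded below by a positive constant off a \emph{finite} set. You have established nothing about the limsup value: your object $v_\infty$ is a uniform limit point of the $n$-stage values, and no relation between the two objects is proved, or available, at that point of the argument. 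Moreover, the exceptional set where your target is small is in general countably infinite, and ``truncating to a finite set carrying most of the mass'' is not a meaningful operation: the relevant distribution over states depends on the strategy pair, and the opponent can drive the play outside any fixed finite set. This is precisely why the paper does not use Solan--Vieille's theorem as a black box, but only borrows two of their technical lemmas.

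What replaces that appeal is the actual crux, absent from your sketch: one defines an auxiliary game $\Gamma^\varepsilon$ in which every active state with $v(x)<\varepsilon$ is made absorbing with payoff $v$, and one must then prove (Proposition \ref{posaux}) that this auxiliary game is \emph{positive-valued}, i.e. $v^\varepsilon_{n(x)}(x)\geq v(x)-4\eta\geq \varepsilon/2$ for some horizon $n(x)$ bounded uniformly in $x$. That comparison is where total boundedness is used beyond mere compactness: it yields a single stage $n(\eta)$ after which \emph{every} $v_m$ is $\eta$-close to the finite net, so that whatever the (random) exit time $\rho_\theta$ is, the remaining-horizon value $v_{n_r-\rho_\theta+1}(x_{\rho_\theta})$ is either bounded above by $v(x_{\rho_\theta})+\eta$ or carries weight at most $\eta$. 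Note that this upper bound requires the target to dominate \emph{all} limit points, i.e. $v=\max_{f\in F}f=\limsup_n v_n$ (Proposition \ref{prop:limsup}); with a single arbitrary limit point $v_\infty$ the bound $v_{n_r-\rho_\theta+1}(x_{\rho_\theta})\leq v_\infty(x_{\rho_\theta})+\eta$ can fail, and the comparison collapses. This asymmetry is also why the paper has player 1 guarantee $\limsup_n v_n$ and player 2 guarantee $\liminf_n v_n$, deducing convergence only at the end, rather than having both players target one common limit point from the start, as you do. Once positivity of the auxiliary game is established, uniformly terminating $\varepsilon$-optimal strategies come from concatenating finite-horizon optimal strategies along \emph{random} block lengths (Proposition \ref{posstage}), and the upcrossing bookkeeping (Lemmas \ref{upcross}--\ref{control_stage}) finishes the argument; your sketch of that outer layer is consistent with the paper, but without Proposition \ref{posaux} the construction has no foundation.
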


We deduce from \textcolor{bleu}{the previous result} a uniform Tauberian theorem in recursive games.
\begin{corollary} \label{coro:UCV_lambda}
The sequence of $n$-stage values $(v_n)_{n\geq 1}$ converges uniformly as $n$ tends to infinity if and only if the sequence of $\lambda$-discounted values $(v_{\lambda})_{\lambda \in (0,1]}$ converges uniformly as $\lambda$ tends to zero. In case of convergence, both limits are the same.
\end{corollary}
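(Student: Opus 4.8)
The plan is to prove Corollary \ref{coro:UCV_lambda} by combining Theorem \ref{theo1} with the cited result of Rosenberg and Vieille \cite{Rosenberg_2000}, using the fact that the existence of a uniform value forces uniform convergence of \emph{both} $(v_n)$ and $(v_\lambda)$ to that same value. I would structure the argument as two implications, established through the common intermediary of the uniform value.

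For the forward direction, suppose $(v_n)$ converges uniformly to some $v$. Since the uniform limit of functions valued in $[-1,1]$ is again bounded, $v\in\mathbf{B}(X)$, and the set $\{v_n,n\geq1\}$, being a uniformly convergent sequence, is totally bounded for $\|\cdot\|_\infty$ (a convergent sequence together with its limit is compact, hence totally bounded). Theorem \ref{theo1} then yields a uniform value $v_\infty$. As noted in Remark \ref{rem:uniformcon}, the existence of $v_\infty$ implies $v_n\to v_\infty$ uniformly, so $v_\infty=v$. It remains to deduce uniform convergence of $(v_\lambda)$ to the same $v_\infty$. For this I would invoke that the uniform value, once it exists, is guaranteed by both players uniformly, and the same $\varepsilon$-optimal strategies control the $\lambda$-discounted payoff for $\lambda$ small; more directly, a standard Tauberian comparison (Abelian--Tauberian estimates relating Cesàro and Abel means of the bounded payoff stream) shows that the uniform guarantees for $(\gamma_n)$ transfer to $(\gamma_\lambda)$, giving $v_\lambda\to v_\infty$ uniformly. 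Hence $(v_\lambda)$ converges uniformly to $v=v_\infty$.

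For the reverse direction, suppose $(v_\lambda)$ converges uniformly to some $v$ as $\lambda\to0$. This is precisely the hypothesis of the Rosenberg--Vieille result quoted in the introduction: if $(v_\lambda)$ converges uniformly to $v$, then $\Gamma$ has a uniform value equal to $v$. Once the uniform value $v_\infty=v$ exists, Remark \ref{rem:uniformcon} again gives that $(v_n)$ converges uniformly to $v_\infty=v$. Thus $(v_n)$ converges uniformly, and the two limits coincide. Combining both directions establishes the equivalence and the equality of limits.

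I expect the main subtlety to lie in the forward direction, specifically in transferring the uniform guarantees from the $n$-stage evaluation to the $\lambda$-discounted evaluation, i.e.\ in justifying $v_\lambda\to v_\infty$ uniformly once the uniform value is known. The cleanest route is to argue that a uniform $\varepsilon$-optimal strategy for player $1$ (resp.\ player $2$) in the sense of uniformly guaranteeing $v_\infty-\varepsilon$ (resp.\ $v_\infty+\varepsilon$) also controls $\gamma_\lambda$ up to an error vanishing with $\lambda$, uniformly in the initial state, because the payoff stream is bounded by $1$ and the Abel weights concentrate on later and later stages as $\lambda\to0$; the key point is that the threshold $N_0$ in the definition of uniform guarantee is state-independent, so the resulting estimate on $|v_\lambda-v_\infty|$ is uniform over $X^0$. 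Care is needed only to handle the contribution of the first $N_0$ stages, which carries total Abel weight $1-(1-\lambda)^{N_0}\to0$ as $\lambda\to0$, uniformly since $N_0$ does not depend on $x_1$.
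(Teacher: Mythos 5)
Your proposal is correct and takes essentially the same route as the paper: uniform convergence of $(v_n)$ gives total boundedness of $\{v_n\}$, Theorem \ref{theo1} then yields the uniform value (hence uniform convergence of $(v_\lambda)$ to it), and the converse is delegated to Rosenberg and Vieille. The only difference is that the step you flag as the main subtlety --- transferring the uniform Ces\`aro guarantees to the $\lambda$-discounted evaluation --- is handled in the paper by citing Sorin (2002, Lemma 3.1), and your sketched Abelian argument (Abel means as convex combinations of Ces\`aro means, with state-independent $N_0$) is precisely the standard proof of that cited lemma.
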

On one hand, if $(v_n)$ converges uniformly, the family is totally bounded, thus the uniform value exists, and this implies the uniform convergence of $(v_\lambda)$ (Sorin \cite{Sorin_2002}, Lemma 3.1). On the other hand, the converse result is established  in Rosenberg and Vieille \cite{Rosenberg_2000} (see Remark 6, Theorem 1 and Theorem 3). 
 
\begin{remarque} The equivalence of the uniform convergences of $(v_n)_{n\geq 1}$ and $(v_\lambda)_{\lambda\in(0,1]}$ has been proven in MDP by Lehrer and Sorin \cite{Lehrer_92}. {\color{orange2} Ziliotto \cite{Ziliotto_2015} recently showed that it is also true for stochastic games whenever the Shapley operator is well defined.}
\end{remarque}
 
If, in addition, for every $n\geq 1$ the $n$-stage value $v_n(x)$ exists in pure strategies, then $\Gamma$ has a uniform value in pure strategies.
 
\begin{corollary}\label{theo3}
Suppose that for every $n\geq 1$, both players have pure optimal strategies in the $n$-stage game, and $\{v_n , n\geq 1\}$ is totally bounded for the uniform norm. Then $\Gamma$ has a uniform value $v_\infty$ in pure strategies. Moreover, both players can uniformly guarantee $v_\infty$ with strategies that depend only on the history of states and not on past actions.
\end{corollary}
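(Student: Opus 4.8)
The plan is to re-examine the construction behind Theorem~\ref{theo1} and to verify that purity can be maintained throughout it. Theorem~\ref{theo1} already delivers the uniform value $v_\infty$ together with, for each $\varepsilon>0$, uniform $\varepsilon$-optimal strategies lying in $\widehat{\Sigma}$ and $\widehat{\T}$. Following Solan and Vieille \cite{Solan_2002}, these strategies are built as concatenations $\sigma^{*}=\sigma^{[1]}\theta_1\sigma^{[2]}\theta_2\cdots$ (and symmetrically for player~2) of building blocks $(\sigma^{[\ell]})_{\ell\ge1}$, where each $\sigma^{[\ell]}$ is a near-optimal strategy, played from the current state, of a finite-horizon game. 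The only thing left to prove is that, under the additional hypothesis, every building block can be chosen pure and state-dependent, since purity will then propagate to $\sigma^{*}$.

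First I would turn the hypothesis into pure, state-dependent building blocks. The existence of pure optimal strategies in $\Gamma_n(x)$ for every $n\ge1$ and every initial state $x$ allows a backward-induction (dynamic-programming) argument: the one-shot games solved at each stage admit pure saddle points, so one obtains pure optimal strategies that are Markovian, that is, functions of the current state and of the number of elapsed stages only. Since the elapsed number of stages is recovered from the length of the history of states, such a Markov strategy is in particular an element of $\widehat{\Sigma}$ (resp.\ $\widehat{\T}$). Consequently each building block used in the proof of Theorem~\ref{theo1} may be replaced by a pure strategy in $\widehat{\Sigma}$ (resp.\ $\widehat{\T}$) achieving the same finite-horizon guarantee, so that none of the payoff estimates driving the construction is affected.

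Next I would check that the concatenation preserves purity and state-dependence. Fix $h_n$. Because each $\theta_m$ is a stopping time, $\theta_m(h)\wedge n$ is $\mathcal{H}_n$-measurable, so the index $m$ with $\theta_{m-1}\le n<\theta_m$ and the sub-history $h_n^{\theta_{m-1}}$ are determined by $h_n$; the defining formula then gives $\sigma^{*}_n(h_n)=\sigma^{[m]}_{n-\theta_{m-1}}(h_n^{\theta_{m-1}})$, which is a Dirac mass because $\sigma^{[m]}$ is pure. Hence $\sigma^{*}$ is pure, and it lies in $\widehat{\Sigma}$ provided the stopping times $(\theta_\ell)$ used in the construction are themselves measurable with respect to the history of states, which is the case in the proof of Theorem~\ref{theo1}. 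Applying the same reasoning to player~2 yields a pure uniform $\varepsilon$-optimal strategy in $\widehat{\T}$, and the two together prove the corollary.

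The step I expect to be the main obstacle is guaranteeing that no essential randomization survives in the construction. One must confirm that the specific stopping times $(\theta_\ell)$ appearing in the proof of Theorem~\ref{theo1} are deterministic functions of the (state) history and that at no point does a player randomize over restart dates or mix between distinct building blocks---a device that does occur in Big-Match-type arguments and would destroy purity. I would therefore locate every place where randomness enters the construction of Theorem~\ref{theo1} and verify that, once the building blocks are pure, the only remaining sources of randomness are the transition function $q$ and the opponent's play, neither of which affects the purity of the strategy produced.
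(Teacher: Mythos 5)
There is a genuine gap, and it sits exactly at the step you flag as ``the main obstacle'' and then defer as a verification: that verification would fail. In the proof of Theorem~\ref{theo1}, the outer switching times $u_1,u_2,\dots$ of $\bar\sigma$ (defined by the thresholds $v(x_n)>2\varepsilon$ and $v(x_n)<\varepsilon$) are indeed deterministic functions of the state history, but the strategy $\sigma^*_\varepsilon$ played on the odd phases is built in Proposition~\ref{posstage} as a concatenation of the blocks $\hat\sigma(x_{u_\ell})$ at \emph{randomized} restart dates: the block lengths $u_{\ell+1}-u_\ell=\tilde k(x_{u_\ell})$ are drawn uniformly in $\{1,\dots,n(x_{u_\ell})\}$. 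This randomization is not incidental; it is what makes the key inequality $\widetilde{\EE}_{x,\hat\sigma,\tau}[g(x_{\tilde k})]\geq v_{n(x)}(x)\geq M$ work, because uniform random sampling of a stage is precisely how the Ces\`aro average payoff of the $n(x)$-stage game is converted into an expected payoff at a single (random) stopping time. So even after you make every building block $\hat\sigma(x)$ pure and state-dependent (which is fine, and matches how the paper handles the one-shot strategy $s^*$ of Corollary~\ref{coro:one-shot}), an essential randomization over restart dates survives, and purity of the blocks does not propagate to $\sigma^*_\varepsilon$.

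Removing that randomization is the actual content of the paper's proof and requires a new argument, not a re-reading of the old one. The paper proves a technical lemma (Lemma~\ref{optstop}): for any $\sigma\in\widehat\Sigma$, $x_1$ and $n$, there exists a \emph{deterministic} stopping time $\theta$ on state histories with $\EE_{x_1,\sigma,\tau}[g(x_\theta)]\geq\min_{\tau'}\EE_{x_1,\sigma,\tau'}\bigl[\frac1n\sum_{t=1}^n g(x_t)\bigr]$ for every $\tau$. Its proof is an induction on $n$ that exploits the recursive structure (payoff $0$ before absorption, so that stopping ``now'' costs nothing and one may compare $0$ with the continuation value $w_{n-1}$), together with Lemma~\ref{lem:reduced}, which reduces the opponent's strategies to state-dependent ones so that the stopping time need only read the state history. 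Substituting this pure stopping time for $\tilde k$ in Proposition~\ref{posstage} is what makes $\sigma^*_\varepsilon$ pure. Your proposal identifies the right danger but supplies no substitute for $\tilde k$; as written, it proves the corollary only for a construction of $\sigma^*_\varepsilon$ that is not the one Theorem~\ref{theo1} provides.
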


\begin{remarque}\label{theo3_com}
The result in Corollary \ref{theo3} extends to games with general action sets $A(x)$ and $B(x)$ provided that for any $n\geq 1$, the $n$-stage game has a value and both players have pure optimal strategies{\color{rouge}.}
\end{remarque}

The proof of Corollary \ref{theo3} is similar to that of Theorem \ref{theo1}. The key difference involves a technical lemma (Lemma \ref{optstop}) for the existence of a (pure) stopping time which is used in the definition of players' optimal strategies (\textcolor{bleu}{see} the proof of Proposition \ref{posstage}). We discuss this point and present the proof in Subsection \ref{pure_proof}.

\subsection{A recursive game without uniform value}

We present here an example of a recursive game with countable state space where $\{v_n, \ n\geq 1 \}$ is not totally bounded and there is no uniform value  {\color{orange2} (See \textbf{Figure 3.2} below for illustration)}. {\color{rouge} This is an adaptation to our framework of an example in Lehrer and Sorin \cite{Lehrer_92}.}\\
 
{\color{bleu} The state space is a subset of $\ZZ\times\ZZ$. The set of active states is $X^0=\{(x,y)\in\NN\times\NN\ | 0\leq y\leq x\}$ and the set of absorbing states is $X^*=X^*_1\bigcup X^*_{-2}$ (two types), where $X^*_1=\NN \times \{-1\}$ and $X^*_{-2}=\{(x,x+1)| x \geq 0 \}$. The payoff is $1$ on $X^*_1$ and is $-2$ on $X^*_{-2}$.} There is only one player (maximizer), whose action set is $\{R(ight), J(ump)\}$. The transition rule is given by:
\begin{itemize}
\item at $(x,0)\in X^0$: $q\big((x,0), R\big)(x+1,0)=1$, and $q\big((x,0), J\big)(x,-1)=q\big((x,0), J\big)(x,1)=\frac{1}{2}$;
\item at $(x,y)\in X^0$ with $0<y\leq x$: $q\big((x,y),a \big)(x,y+1)=1$, $\forall a\in\{R,J\}$.
\end{itemize}

Starting at $(0,0)$, one optimal strategy for an $n$-stage game is to go \textit{Right} for half of the game, and then to \textit{Jump}. This gives an expected average payoff around $\frac{1}{4}$, thus $\lim_{n\to\infty} v_n(0,0)=\frac{1}{4}$. In a $\lambda$-discounted game, the optimal stage to $Jump$ is approximately $\frac{\ln (\frac{2-\lambda}{4})}{\ln (1-\lambda)}$. It follows that $v_{\lambda}(0,0)\approx \frac{2-\lambda}{16}$ and thus $\lim_{\lambda\to 0} v_{\lambda}(0,0)=\frac{1}{8}$. This implies that there is no uniform value. On the other hand, $\{v_n, n\geq 1\}$ is not totally bounded for the uniform norm. Indeed, the convergence of $(v_n)$ is not uniform: for any $x\geq 1$, $\lim_{n\to\infty} v_n(x,1)=-2$ while $v_x(x,1)=0$. 

 \begin{center}
\begin{tikzpicture}[scale=0.7] 
\centering
\draw [thick, <->] (9,0) -- (0,0) -- (0,6); 
\draw[thick, domain=0:6] plot (\x, {\x}); 
\draw[fill] (0,0) circle (0.05) (1,0) circle (0.05) (2,0) circle (0.05) (3,0) circle (0.05) (4,0) circle (0.05) (5,0) circle (0.05) (6,0) circle (0.05) (1,1) circle (0.05) (2,1) circle (0.05) (3,1) circle (0.05) (4,1) circle (0.05) (5,1) circle (0.05) (6,1) circle (0.05) (2,2) circle (0.05) (3,2) circle (0.05) (4,2) circle (0.05) (5,2) circle (0.05) (6,2) circle (0.05)  (3,3) circle (0.05) (4,3) circle (0.05) (5,3) circle (0.05) (6,3) circle (0.05) (4,4) circle (0.05) (5,4) circle (0.05) (6,4) circle (0.05) (5,5) circle (0.05) (6,5) circle (0.05) (6,6) circle (0.05);

\node [below] at (0,0) {$(0,0)$};
  
\draw[ultra thick, domain=0:5.5] plot (\x, {1+\x});
\node  at (3.5,6) {$(n, n+1)$};
  
\draw[ultra thick, domain=0:7.5] plot (\x, {-1});
\node  at (5.7,-1.5) {$(n,-1)$};

\node at (9.5, -0.5) {$x$};
\node at (-0.5, 6) {$y$};

\node at (8.25, -1) {$X^*_1$};
\node at (6.15, 6.75) {$X^*_{-2}$};  

\draw  [thick, ->] (0,0) to [right] (1,0);
\draw  [thick, ->] (1,0) to [right] (2,0);
\draw  [thick, ->] (2,0) to [right] (3,0);
\draw  [thick, ->] (3,0) to [right] (4,0);
\draw  [thick, ->] (4,0) to [right] (5,0);
  
\draw  [thick, ->, densely dashed] (5,0) to [bend right] (5,1);
\draw  [thick, ->] (5,1) -- (5,2);
\draw  [thick, ->] (5,2) -- (5,3);
\draw  [thick, ->] (5,3) -- (5,4);
\draw  [thick, ->] (5,4) -- (5,5);
\draw  [thick, ->] (5,5) -- (5,6);
\draw  [thick, ->,  densely dashed] (5,0) to [bend right] (5,-1);

\node [below] at (5.8,0) {$(n,0)$};
\node [right] at (5, 4.7) {$(n,n)$}; 

\node[text width=6cm, anchor=west, right] at (10,3.5)
    {The figure on the left illustrates a play $(R,...,R, J)$ jumping after $n$ steps: with probability 1/2 the state is absorbed at $(n,n+1)\in X^*_{-2}$; with probability 1/2 the state is absorbed at $(n,-1)\in X^*_{1}$.\\
   ~ \\ 
    $\longrightarrow$ : a deterministic transition;\\
     ${-}\dashrightarrow$: a probabilistic transition.};

\end{tikzpicture}

 \textbf{\ \ \  Figure 3.2}
 \end{center}

\section{Proofs} \label{sec:4}
{\color{vert} In the first subsection, we introduce and establish preliminary results for a subclass of recursive game, which will be called \emph{positive-valued recursive games}}. In the second subsection, we prove Theorem \ref{theo1} by a reduction of any recursive game to a positive-valued recursive game. The proof for Corollary \ref{theo3} is given in the third subsection.

\subsection{The case of positive-valued recursive game}\label{positive}
%

\begin{definition} A recursive game is \emph{positive-valued} if there exist $M>0$ and $n_0 \geq 1$ such that for every non-absorbing state $x\in X^0$, there exists $n(x)\leq n_0$ such that $v_{n(x)}(x) \geq M$.
\end{definition}

{\color{rouge} In order to state the next proposition, we first introduce the notion of uniformly terminating strategy.}

\begin{definition}
Denote by $\rho$ the stopping time of absorption in $X^*$: $\rho=\inf \{n \geq 1, x_n \in X^* \}$.  The strategy $\sigma$ is said to be \emph{uniformly terminating} if for any $\varepsilon>0$, there exists $N\geq 1$ such that for every $x_1\in X^0$ and for every $\tau \in \T $, $\PP_{x_1,\sigma,\tau}(\rho \leq N) \geq 1-\varepsilon.$\\
\end{definition}

\begin{proposition}\label{posstage}
Let $\Gamma$ be a positive-valued recursive game. We fix the numbers $M>0, n_0\geq 1$ and the mapping $n(\cdot):X^0\longrightarrow\{1,...,n_0\}$ such that $v_{n(x)}(x)\geq M, \forall x\in X^0$.\\ 
Then player 1 uniformly guarantees $v_{n(\cdot)}(\cdot)$ with uniformly terminating strategies that depends only on states: for all $\varepsilon>0$, there exists $\sigma^*$ in $\widehat{\Sigma}$ and $N_0\geq 1$ such that for every $x_1\in X^0$ and every $\tau$ in $\T$,
$$(i) \  \PP_{x_1, \sigma^*,\tau}(\rho\leq N_0)\geq 1-\varepsilon \text{\  and  \ } (ii) \  \gamma_n(x_1,\sigma^*,\tau)\geq v_{n(x_1)}(x_1)-\varepsilon, \ \forall n\geq N_0.$$
\end{proposition}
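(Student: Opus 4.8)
The plan is to construct $\sigma^*$ by concatenating, block by block, a short optimal strategy attached to each visited active state, and then to verify the two conclusions separately; the termination estimate (i) is elementary, while the payoff bound (ii) is the substantive part and will be reduced to a single inequality on the expected absorbing payoff.

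\textbf{Construction.} For each active state $x$ fix $n(x)\le n_0$ with $v_{n(x)}(x)\ge M$ and a strategy $\sigma^{[x]}$ optimal in $\Gamma_{n(x)}(x)$; since the $n(x)$-stage game is finite and recursive one may take $\sigma^{[x]}$ to depend on the history only through the sequence of states, so that $\sigma^{[x]}\in\widehat{\Sigma}$. Define increasing stopping times by $\theta_0=1$ and $\theta_\ell=\theta_{\ell-1}+n(x_{\theta_{\ell-1}})$ as long as $x_{\theta_{\ell-1}}\in X^0$ (once an absorbing state is reached the play is frozen and no further resetting occurs). Let $\sigma^*$ be the concatenation of $(\sigma^{[x_{\theta_{\ell-1}}]})_{\ell\ge 1}$ along $(\theta_\ell)$: at the start of each block the player forgets the past and plays, for at most $n_0$ stages, an $n(x)$-stage optimal strategy from the current active state $x$. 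Since each $\sigma^{[x]}\in\widehat{\Sigma}$ and the $\theta_\ell$ depend only on states, $\sigma^*\in\widehat{\Sigma}$.

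\textbf{Uniform termination (i).} The key one-block estimate is that, for any active $x$ and any $\tau$, the $n(x)$-stage average under $\sigma^{[x]}$ is at most the probability of absorbing within the block, because active states carry payoff $0$ and absorbing payoffs lie in $[-1,1]$; hence $\PP(\rho\le n(x))\ge\gamma_{n(x)}(x,\sigma^{[x]},\tau)\ge v_{n(x)}(x)\ge M$. Applying this at each block boundary, using the optimality of each $\sigma^{[x]}$ against the continuation of $\tau$, the probability of still being in $X^0$ after $k$ consecutive blocks is at most $(1-M)^k$, uniformly in $x_1\in X^0$ and $\tau\in\T$. As each block lasts at most $n_0$ stages, $\PP_{x_1,\sigma^*,\tau}(\rho>1+kn_0)\le(1-M)^k$; choosing $k$ with $(1-M)^k\le\varepsilon$ and setting $N_1:=1+kn_0$ gives (i). The same geometric tail yields a uniform bound $\EE_{x_1,\sigma^*,\tau}[\rho]\le\bar\rho<\infty$ with $\bar\rho$ independent of $x_1$ and $\tau$.

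\textbf{Reduction of (ii).} On $\{\rho\le n\}$ the cumulative payoff over the first $n$ stages equals $(n-\rho+1)g(x_\rho)$, so a direct computation gives $|\gamma_n(x_1,\sigma^*,\tau)-\EE_{x_1,\sigma^*,\tau}[g(x_\rho)\1_{\rho<\infty}]|\le \bar\rho/n+\PP(\rho>n)$. By the previous paragraph the right-hand side is at most $\varepsilon/2$ for all $n\ge N_0$, for some $N_0\ge N_1$ independent of $x_1$ and $\tau$. It therefore suffices to prove the single inequality $\EE_{x_1,\sigma^*,\tau}[g(x_\rho)\1_{\rho<\infty}]\ge v_{n(x_1)}(x_1)-\varepsilon/2$ for every $\tau$.

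\textbf{The crux and main obstacle.} To bound the expected absorbing payoff from below I would introduce the potential $\Phi$ equal to $v_{n(\cdot)}$ on $X^0$ and to $g$ on $X^*$, and track $\Phi(x_{\theta_\ell})$ along block boundaries, the goal being that $(\Phi(x_{\theta_\ell}))_\ell$ is, up to $\rho$, a submartingale for player $1$. Optional stopping at $\rho\wedge\theta_k$, together with $\Phi\ge M>0$ on $X^0$, $|\Phi|\le 1$, and the uniform termination above, would then give $\EE[\Phi(x_{\rho\wedge\theta_k})]\ge\Phi(x_1)$, hence $\EE[g(x_\rho)\1_{\rho\le\theta_k}]\ge v_{n(x_1)}(x_1)-\PP(\rho>\theta_k)$, and letting $k\to\infty$ yields the inequality of the preceding paragraph. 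The obstacle is that each $\sigma^{[x]}$ controls only the \emph{damped} block average $\EE[(n(x)-\rho'+1)g(x_{\rho'})/n(x)]\ge v_{n(x)}(x)$, whereas $\Phi$ involves the undamped absorbing payoff $g(x_{\rho'})$; the discrepancy carried by the factor $(\rho'-1)/n(x)$ is of order one per block and cannot be summed naively. I expect to absorb it by working with the exact Shapley recursion $n\,v_n(x)=\mathrm{val}_{a,b}\sum_{x'}q(x,a,b)(x')(n-1)v_{n-1}(x')$, which turns the within-block process $\sum_{s\le j}g_s+(n(x)-j)\,v_{n(x)-j}(x_j)$ into an exact player-$1$ submartingale, and by invoking the structural results of Solan and Vieille \cite{Solan_2002} on positive-valued recursive games to glue the blocks across boundaries; this gluing, rather than the termination estimate, is where the real work lies.
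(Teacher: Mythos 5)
Your construction differs from the paper's in one crucial point, and that difference is exactly where your proof breaks. Much of your argument is sound: the one-block estimate $\PP(\rho\le n(x))\ge \gamma_{n(x)}(x,\sigma^{[x]},\tau)\ge v_{n(x)}(x)\ge M$ and the geometric decay across blocks do give (i), and your reduction of (ii) to the single inequality $\EE_{x_1,\sigma^*,\tau}\left[g(x_\rho)\1_{\rho<\infty}\right]\ge v_{n(x_1)}(x_1)-\varepsilon/2$ is correct. But with \emph{deterministic} block lengths $n(x)$, the only per-block information that optimality of $\sigma^{[x]}$ gives you is the damped bound $\EE\left[\frac{n(x)-\rho'+1}{n(x)}g(x_{\rho'})\1_{\rho'\le n(x)}\right]\ge v_{n(x)}(x)$, and, as you yourself observe, this does not control the undamped quantity $\EE\left[g(x_{\rho'})\1_{\rho'\le n(x)}\right]$: against your strategy, player 2 (who in the worst case knows the block boundaries) can steer absorption at payoff $-1$ toward the end of a block, where it costs him only $O(1/n(x))$ in the block average but a full unit in the undamped absorption payoff. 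Consequently the submartingale property you want for $\Phi(x_{\theta_\ell})$ genuinely fails -- later blocks can have strictly negative undamped contribution -- and neither of your proposed repairs closes the gap: the exact Shapley submartingale $\sum_{s\le j}g_s+(n(x)-j)\,v_{n(x)-j}(x_j)$, stopped at the end of the block, returns precisely the damped Ces\`aro guarantee you started from, and no identified statement of Solan and Vieille performs the ``gluing.'' So the central step of (ii) is a genuine gap, and it is not merely a missing verification: there is no reason the deterministically concatenated strategy satisfies (ii) at all.

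The paper closes exactly this hole with one device: the block length is \emph{random}, uniform on $\{1,\ldots,n(x)\}$. If $\tilde k(x)$ is uniform on $\{1,\ldots,n(x)\}$ and independent of the play, then $\EE\left[g(x_{\tilde k})\right]$ equals the expected block average by uniformity, hence is at least $v_{n(x)}(x)$ by optimality of $\sigma^{[x]}$; and since the payoff is $0$ before absorption and frozen after it, $\EE\left[g(x_{\tilde k})\right]=\EE\left[g(x_{\rho})\1_{\rho\le \tilde k}\right]$. Cutting the block at $\tilde k$ thus converts the damping factor $\frac{n(x)-\rho+1}{n(x)}$ into the probability that the random block still contains $\rho$, and yields the \emph{undamped} per-block bound $\EE\left[g(x_\rho)\1_{\text{absorption in block}}\right]\ge v_{n(x)}(x)\ge M$. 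With that inequality, your own block decomposition finishes the proof exactly as you planned: the first block contributes at least $v_{n(x_1)}(x_1)$, each later block contributes a nonnegative amount (at least $M$ times the probability of reaching it unabsorbed), and the tail terms are controlled by your (i). The price of the randomization is that $\sigma^*$ is then a generalized mixed strategy (behavioral via Kuhn's theorem), which is why the pure-strategy statement, Corollary \ref{theo3}, requires the separate derandomization of the stopping time carried out in Lemma \ref{optstop}.
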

\begin{proof} Let $\hat{\sigma}$ be a profile of strategies such that for every $x\in X^0$, $\hat{\sigma}(x)$ is optimal in the $n(x)$-stage game $\Gamma_{n(x)}(x)$. Let $\tilde{k}:=\tilde{k}(x)$ be a random stage uniformly chosen in $\{1,...,n(x)\}$. {\color{violet} For any $\tau\in \T$ and $x\in X^0$, $(x,\hat{\sigma},\tau)$ and $\tilde{k}$ induce a probability distribution over $H_\infty \times \{1,...n(x)\}$, which we denote by $\widetilde{\PP}_{x,\hat{\sigma}, \tau}$. Let $\widetilde{\EE}_{x,\hat{\sigma}, \tau}$ be the corresponding expectation}. We obtain:
$$\widetilde{\EE}_{x,\hat{\sigma}, \tau}[g(x_{\tilde{k}})]=  \EE_{x,\hat{\sigma},\tau}\Big[\frac{1}{n(x)}\sum_{l=1}^{n(x)}g(x_t)\Big] \geq \inf_{\tau'}\EE_{x,\hat{\sigma},\tau'}\Big[\frac{1}{n(x)}\sum_{l=1}^{n(x)}g(x_t)\Big]\geq v_{n(x)}(x)\geq M.$$
{\color{bleu} It follows that
\[
\widetilde{\EE}_{x,\hat{\sigma}, \tau}\left[g(x_{\tilde{k}}) \1_{\rho\leq  \tilde{k}}+ g(x_{\tilde{k}}) \1_{\rho > \tilde{k}}\right]\geq M.
\]
{\color{rouge} On the event $\{\rho>\tilde{k}\}$, $g(x_{\tilde{k}})=0$, whereas on the event $\{ \rho\leq \tilde{k} \}$, we have $g(x_{\tilde{k}})=g(x_{\rho})$.} This implies that
\begin{align} \label{eq:positive-1}
\widetilde{\PP}_{x,\hat{\sigma},\tau}(\rho\leq \tilde{k})\widetilde{\EE}_{x,\hat{\sigma},\tau}\left[g(x_{\rho}) \mid \rho\leq \tilde{k}\right]=\widetilde{\EE}_{x,\hat{\sigma},\tau}\left[g\left(x_{\tilde{k}}\right)\right]\geq v_{n(x)}(x)\geq M.
\end{align}
}
Using the fact that the payoff function $g$ has maximal norm $1$, we deduce from (\ref{eq:positive-1}):
\begin{align}\label{eq:positive-2}
\widetilde{\PP}_{x,\hat{\sigma},\tau}(\rho\leq \tilde{k})\geq M.
\end{align}  
Define the strategy \footnote{{\color{violet} The strategy $\sigma^*$ is a generalized mixed strategy, which is equivalent to a behavior strategy by Kuhn's theorem.}} $\sigma^*$  as concatenations of $(\hat{\sigma}(x_{u_l}))_{l\geq 0}$ at the random stages $(u_{\ell})_{\ell\geq 0}$, where $u_{\ell}$ is defined inductively along the play {\color{violet2} by }$u_0=1$ and $u_{\ell+1}-u_\ell=\tilde{k}(x_{u_\ell})$ follows the uniform distribution over $\{1,...,n(x_\ell)\}$. {\color{violet}Let $\widetilde{\PP}_{x,\sigma^*, \tau}$ be the (product) probability distribution over $H_\infty\times \{1,...,n_0\}^\NN$ induced by $(x, \sigma^*,\tau)$, and $\widetilde{\EE}_{x,\sigma^*, \tau}$ the corresponding expectation. Let $\varepsilon>0$.} \\   
\noindent  $(i)$ \ We show that $\sigma^*$ is uniformly terminating. By (\ref{eq:positive-2}), the conditional probability of absorbing on each block $\{u_{l-1},...,u_{l}-1\}$ is no smaller than $M$. Thus for any $\tau$ and $x_1\in X^0$, $$\widetilde{\PP}_{x_1, \sigma^*,\tau}\left(\rho\geq u_{l}\right)\leq (1-M)^l, \ \forall l\geq 1.$$
The length of each block is uniformly bounded by $n_0$, thus if we put $l^* \geq \frac{\ln (\varepsilon)}{\ln(1-M)}$:
\begin{eqnarray} \label{eq4}
\widetilde{\PP}_{x_1,\sigma^*,\tau}\left( \rho\leq n_0l^*\right)\geq \widetilde{\PP}_{x_1,\sigma^*,\tau}\left( \rho\leq u_{l^*}\right)\geq 1-(1-M)^{l^*}\geq 1-\varepsilon.
\end{eqnarray}
\noindent $(ii)$ \ We now argue that $\sigma^*$ uniformly guarantees $v_{n(x_1)}(x_1)-3\varepsilon$. Let $N_0=n_0 l^*/\varepsilon$. 
For any $\tau\in\T$, $x_1\in X^0$ and $n\geq n_0  l^*$, we have
\begin{eqnarray*} 
\EE_{x_1,\sigma^*,\tau} \left[ g\left(x_n\right)\right]&=&{\color{orange2} \widetilde{\EE}_{x_1,\sigma^*,\tau} \left[ \sum_{l=0}^{\ell^*-1}  g(x_n)\1_{u_l \leq \rho <u_{l+1} }+ g(x_n) \1_{u_{\ell^*} \leq \rho} \right]}\\ 
 & = &\sum_{l=0}^{\ell^*-1} \widetilde{\PP}_{x_1,\sigma^*,\tau}(u_{l}\leq  \rho < u_{l+1})\widetilde{\EE}_{x_1,\sigma^*,\tau}\left[g(x_{\rho})|u_{l}\leq \rho < u_{l+1}\right] \\
&+& \widetilde{\PP}_{x_1,\sigma^*,\tau}(u_{l^*} \leq \rho)\widetilde{\EE}_{x_1,\sigma^*,\tau}\left[g(x_{n})|\rho\geq u_{l^*} \right].
\end{eqnarray*}
According to (\ref{eq4}), $\PP_{x_1,\sigma^*,\tau}(\rho\geq u_{l^*})\leq \varepsilon$, thus we focus on an absorption before $u_{\ell^*}$: 
\begin{eqnarray} \label{eq5}
\EE_{x_1,\sigma^*,\tau} \left[ g(x_n)\right]\geq\sum_{l=0}^{l^*-1} \widetilde{\PP}_{x_1,\sigma^*,\tau}(u_{l}\leq  \rho < u_{l+1})\widetilde{\EE}_{x_1,\sigma^*,\tau}\left[g(x_{\rho})|u_{l}\leq \rho < u_{l+1}\right]-\varepsilon
\end{eqnarray}
For each $l\geq 0$, $\sigma^*$ is following $\hat{\sigma}(x_{u_l})$ for $u_{l+1}-u_{l}=\tilde{k}(x_{u_\ell})$ {\color{orange2} stages}. Thus (\ref{eq:positive-1}) applies, and we obtain: for $\ell\geq 1$, 
\begin{eqnarray*}
 \widetilde{\PP}_{x_1,\sigma^*,\tau}(u_{l}\leq  \rho < u_{l+1}) \widetilde{\EE}_{x_1,\sigma^*,\tau}[g(x_{\rho})|u_{l}\leq \rho < u_{l+1}]\geq \widetilde{\PP}_{x_1,\sigma^*,\tau}(\rho>u_\ell) M>0,
\end{eqnarray*}
and for $l=0$,
$$\widetilde{\PP}_{x_1,\sigma^*,\tau}(1\leq  \rho < u_{1}) \widetilde{\EE}_{x_1,\sigma^*,\tau}[g(x_{\rho})|1\leq \rho < u_{1}]\geq v_{n(x_1)}(x_1).$$
\noindent By substituting the two previous inequalities into (\ref{eq5}), we obtain that 
\begin{eqnarray}\label{eq6}
\forall n\geq n_0l^*, \forall x_1\in X^0, \  \EE_{x_1,\sigma^*,\tau} \left[g(x_n)\right] \geq   v_{n(x_1)}(x_1)-\varepsilon.  
\end{eqnarray}
Now for $n\geq N_0$, we deduce that $\gamma_n(x_1,\sigma^*,\tau)   \geq v_{n(x_1)}(x_1)-3\varepsilon$.
\end{proof}\\

One can deduce from Proposition \ref{posstage} a first result on recursive games with the condition that the sequence of $n$-stage values converges uniformly to a function bounded away from $0$.

\begin{corollary}\label{poslim}
Assume that in a recursive game $\Gamma$, the sequence of $n$-stage values $(v_n)_{n\geq 1}$ converges uniformly to a function $v$ satisfying for every $x\in X^0$, $v(x) \geq M'>0$ for some $M'$. Then $\Gamma$ is positive-valued and player 1 uniformly guarantees $v$ with uniformly terminating strategies.
\end{corollary}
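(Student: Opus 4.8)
The plan is to combine the uniform convergence hypothesis with Proposition \ref{posstage}. The subtlety to keep in mind from the outset is that the strategy $\sigma^*$ produced there guarantees the \emph{finite-stage} value $v_{n(\cdot)}(\cdot)$ rather than the limit $v$; the whole point will be to let the index $n_0$ grow as the target accuracy improves, so that $v_{n_0}$ becomes uniformly close to $v$.

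First I would establish that $\Gamma$ is positive-valued. Since $(v_n)$ converges uniformly to $v$ and $v\geq M'>0$ on $X^0$, there exists $n_0\geq 1$ with $\|v_{n_0}-v\|_\infty\leq M'/2$; hence $v_{n_0}(x)\geq v(x)-M'/2\geq M'/2$ for every $x\in X^0$. Taking $M:=M'/2$ and the constant map $n(\cdot)\equiv n_0$ then witnesses that $\Gamma$ is positive-valued.

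For the guaranteeing part, recall that ``player 1 uniformly guarantees $v$'' means that for every $\delta>0$ there is a (here, uniformly terminating) strategy achieving at least $v(x_1)-\delta$ from some stage $N_0$ on, with $N_0$ independent of $x_1$. Fix such a $\delta$, which we may assume to be $\leq M'$. By uniform convergence I would choose $n_0=n_0(\delta)$ with $\|v_{n_0}-v\|_\infty\leq \delta/2$; as above $v_{n_0}(x)\geq M'/2$ on $X^0$, so the data $M=M'/2$, $n_0$, $n(\cdot)\equiv n_0$ satisfy the hypotheses of Proposition \ref{posstage}. That proposition yields a uniformly terminating strategy $\sigma^*\in\widehat{\Sigma}$ and a stage $N_0$ such that $\gamma_n(x_1,\sigma^*,\tau)\geq v_{n_0}(x_1)-\delta/2$ for all $x_1\in X^0$, $\tau\in\T$ and $n\geq N_0$. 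Combining with $v_{n_0}(x_1)\geq v(x_1)-\delta/2$ gives $\gamma_n(x_1,\sigma^*,\tau)\geq v(x_1)-\delta$ uniformly in $x_1$, which is exactly what is required, and $\sigma^*$ is uniformly terminating, so it serves as the sought strategy for the level $\delta$.

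The only delicate point, rather than a genuine obstacle, is that the strategy must change with $\delta$: as $\delta\to 0$ we are forced to take $n_0(\delta)\to\infty$ in order to push $v_{n_0}$ close to $v$, and each new $n_0$ produces a different concatenation $\sigma^*$. This is harmless because the definition of uniform guarantee already permits a distinct strategy for each accuracy level. What makes everything fit together is that both the approximation $\|v_{n_0}-v\|_\infty\leq\delta/2$ and the bound in Proposition \ref{posstage} are \emph{uniform in the initial state}, so the threshold $N_0$ after which $\gamma_n\geq v(x_1)-\delta$ holds can indeed be chosen independently of $x_1$.
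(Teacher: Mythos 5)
Your proof is correct and is essentially the argument the paper intends: the corollary is stated as a direct consequence of Proposition \ref{posstage}, obtained exactly as you do by using uniform convergence to pick $n_0(\delta)$ with $\|v_{n_0}-v\|_\infty\leq\delta/2$, taking the constant map $n(\cdot)\equiv n_0$ with $M=M'/2$, and letting the resulting uniformly terminating strategy (which may depend on $\delta$, as the definition of uniform guarantee allows) absorb the two $\delta/2$ errors. Nothing is missing; the observation that $N_0$ and the approximation bound are both uniform in the initial state is precisely what makes the deduction work.
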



\subsection{Existence of the uniform value (proof of Theorem \ref{theo1})}\label{proof_uni}

This subsection is devoted to the proof of Theorem \ref{theo1}: the total boundedness of $\{v_n,n\geq 1\}$ implies the existence of the uniform value $v_\infty$. We prove that player 1 guarantees the point-wise limit superior value $x\mapsto v(x):=\limsup_{n} v_n(x)$. By symmetry, player $2$ guarantees $\liminf_n v_n(x)$, and the result follows.  \\

{\color{orange2}
The {\color{vert}uniform $\varepsilon$-optimal strategy} will use alternatively two different types of strategies. This approach is classical for recursive games and has been used for example in Rosenberg and Vieille \cite{Rosenberg_2000} and in Solan and Vieille \cite{Solan_2002}. Our construction is {\color{violet2} close} to Solan and Vieille \cite{Solan_2002} {\color{vert} in which some similar "positive-valued recursive game" is introduced to make a reduction for the general case}. 

{\color{rouge} The proof is decomposed into three parts. In the first one, we introduce a family of auxiliary positive-valued recursive games and define the first type of strategies. In the second part, we define the second type of strategies. Finally, we construct the strategy $\sigma^*$ and prove that it is uniform $\varepsilon$-optimal.}\\

{\color{vert} Before proceeding to the proof, let us first prove a preliminary result, which shows that due to the total boundedness of $\{v_n\}$, the point-wise limit superior of $(v_n)$ can be realized along uniform convergent subsequences.} {\color{jaune} We fix a recursive game $\Gamma$ for the rest of this section.}

\begin{proposition} \label{prop:limsup}
For every $x\in X$, we have
\[
v(x)=\limsup_n v_n(x)=\max_{f\in F} f(x),
\]
where $F$ is the set of limit points of the sequence $(v_n)_{n\geq 1}$ in $({\bf B}(X),\|.\|_\infty)$.
\end{proposition}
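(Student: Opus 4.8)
The plan is to exploit that $\{v_n, n\geq 1\}$ being totally bounded in the complete metric space $({\bf B}(X),\|\cdot\|_\infty)$ implies that its closure is compact, hence every subsequence of $(v_n)$ admits a further subsequence converging uniformly to an element of $F$. The proof then splits into the two inequalities that together yield $\max_{f\in F} f(x)=v(x)$, together with the verification that the supremum is actually attained.

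First I would establish the easy inequality $f(x)\leq v(x)$ for every $f\in F$. Indeed, if $f\in F$ there is a subsequence $(v_{n_k})_k$ with $\|v_{n_k}-f\|_\infty\to 0$; in particular $v_{n_k}(x)\to f(x)$, and since any subsequential limit is dominated by the limit superior, $f(x)\leq \limsup_n v_n(x)=v(x)$. This already shows $\sup_{f\in F} f(x)\leq v(x)$ for every $x$.

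For the reverse inequality, and simultaneously to show that the supremum is attained, I would fix $x$ and choose a subsequence $(v_{m_k})_k$ with $v_{m_k}(x)\to v(x)$, which exists by the very definition of the limit superior. By total boundedness, $(v_{m_k})_k$ admits a further subsequence $(v_{m_{k_j}})_j$ converging uniformly to some $g\in F$. Passing to a subsequence preserves the pointwise limit at $x$, so $g(x)=\lim_j v_{m_{k_j}}(x)=v(x)$. Hence there is $g\in F$ with $g(x)=v(x)$, and combined with the first part this gives $v(x)=g(x)=\max_{f\in F} f(x)$, the maximum being attained at $g$.

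The argument is essentially a compactness argument, so there is no serious obstacle; the only point requiring care is the preliminary observation that $({\bf B}(X),\|\cdot\|_\infty)$ is complete. This holds because $[-1,1]$ is closed and a uniform limit of $[-1,1]$-valued functions is again $[-1,1]$-valued, so a uniformly Cauchy sequence converges within ${\bf B}(X)$. Consequently total boundedness upgrades to relative compactness, which both guarantees that $F$ is nonempty and supplies the uniformly convergent subsequences used above.
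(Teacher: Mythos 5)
Your proof is correct and follows essentially the same route as the paper: both use completeness of $({\bf B}(X),\|\cdot\|_\infty)$ plus total boundedness to extract, from a subsequence realizing $\limsup_n v_n(x)$, a further uniformly convergent subsequence whose limit $g\in F$ satisfies $g(x)=v(x)$, while the reverse inequality $f(x)\le v(x)$ for all $f\in F$ is immediate since pointwise subsequential limits are dominated by the limit superior. The only cosmetic difference is that you justify completeness of ${\bf B}(X)$ and the attainment of the maximum a bit more explicitly, whereas the paper invokes compactness of $F$ directly.
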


\begin{proof}
{\color{vert}  $(\textbf{B}(X),\|.\|_\infty)$ is a complete metric space and  ($\{v_n\},\|\cdot\|_\infty$) is totally bounded, therefore $F$ is compact and non-empty.  For every $x\in X$, we denote $w(x):=\max_{f \in F} f(x)$. Fix $x\in X$.  Since $v(x)$ is the largest limit point of $(v_n(x))_{n\geq 1}$, we have $w(x)\leq v(x)$.  By definition of the limit superior, there exists a subsequence $(v_{n_k}(x))_{k \geq 1}$ which converges to $\limsup v_n(x)$. There exists a subsequence of $(v_{n_k})_{k\geq 1}$ that converges in $({\bf B}(X),\|.\|_\infty)$ to some $f^* \in F$, therefore
\[
\max_{f\in F} f(x) \geq f^*(x)=v(x).
\]
} 
\end{proof}

\subsubsection{Reduction: auxiliary recursive games}\label{reduction}

\vspace{5mm}

\noindent \textbf{Auxiliary recursive games } \ {\color{rouge} Let $\theta: X \rightarrow \{0,1\}$. We define the auxiliary recursive game {\color{jaune} $\Gamma^\theta=\langle A, B, X=X^0_\theta\bigcup X^*_\theta, q_\theta,g_\theta\rangle$} where any active state $x\in X^0$ such that $\theta(x)=1$ is seen as an absorbing state: the active state space of $\Gamma^\theta$ is $X^0_\theta= \{ x\in X^0,\theta(x)=0\}$ and the absorbing state space is {\color{jaune}$X^*_\theta= X^* \bigcup \{ x\in X^0, \theta(x)=1 \}$}. The transition $q_\theta$ is equal to $q$ and the payoff $g_\theta$ is equal to $g$ {\color{jaune} on all states except $\{x\in X^0, \theta(x)=1\}$, on which the state is absorbing and the absorbing payoff is $g_\theta=v$}. For every $n\geq 1$, let $v^\theta_n$ be the value of the $n$-stage auxiliary game $\Gamma^\theta_n$.}

\begin{proposition}\label{posaux}
Let $\eta >0$ and $\theta:X\rightarrow \{0,1\}$. There exists $n_0 \geq 1$ such that for every $x_1 \in X_\theta^0$, there exists $n(x_1)\leq n_0$ with $v^\theta_{n(x_1)}(x_1) \geq v(x_1)-4\eta$.
\end{proposition}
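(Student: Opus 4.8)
The plan is to derive the statement from the total boundedness of $\{v_n,n\ge1\}$ and a comparison between the auxiliary game $\Gamma^\theta$ and the original game $\Gamma$ at one carefully chosen finite horizon. First I would record two consequences of total boundedness. Since $\{v_n\}$ is totally bounded in the complete space $({\bf B}(X),\|.\|_\infty)$, its closure is compact and every subsequence of $(v_m)$ admits a subsubsequence converging to an element of $F$; a standard argument then gives $\mathrm{dist}(v_m,F)\to 0$ as $m\to\infty$. Using $v=\max_{f\in F}f$ (Proposition \ref{prop:limsup}), this yields an integer $M_\eta\ge1$ such that $v_m(x)\le v(x)+\eta$ for all $m\ge M_\eta$ and all $x\in X$. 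Second, I would fix a finite $\eta$-net $\{f_1,\dots,f_R\}\subseteq F$ of the compact set $F$; as each $f_r$ is a limit point of $(v_n)$, for every $r$ I can choose an integer $N_r\ge 2M_\eta/\eta$ with $\|v_{N_r}-f_r\|_\infty\le\eta$. Setting $n_0:=\max_{1\le r\le R} N_r$ fixes the uniform bound.

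For a given $x_1\in X^0_\theta$ I then select the horizon. Writing $v(x_1)=f^\ast(x_1)$ with $f^\ast\in F$ and taking a net point $f_r$ with $\|f^\ast-f_r\|_\infty\le\eta$, I obtain $v_{N_r}(x_1)\ge f_r(x_1)-\eta\ge v(x_1)-2\eta$; I put $n(x_1):=N_r\le n_0$. It then remains to establish the single inequality $v^\theta_{N_r}(x_1)\ge v_{N_r}(x_1)-2\eta$, since combining the two estimates gives $v^\theta_{N_r}(x_1)\ge v(x_1)-4\eta$.

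For the comparison I would let $\sigma$ be an optimal strategy of player 1 in $\Gamma_{N_r}(x_1)$ and play it in $\Gamma^\theta_{N_r}(x_1)$ against an arbitrary $\tau$, denoting the resulting $N_r$-stage payoff in $\Gamma^\theta$ by $\gamma^\theta_{N_r}(x_1,\sigma,\tau)$. Let $T$ be the first stage at which the play reaches a state of $\{x\in X^0:\theta(x)=1\}$ (so $T=\infty$ if the play is first absorbed in $X^*$). Before $T$ the two games coincide, since $q_\theta=q$ and the stage payoff is $0$ on the active states with $\theta=0$; I therefore couple the plays and let $\bar\tau$ extend $\tau$ to $\Gamma$ by playing, after stage $T$, a minimizing optimal strategy of the remaining original game $\Gamma_{N_r-T+1}(x_T)$. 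On $\{T>N_r\}$ the stage payoffs agree up to $N_r$; on $\{T\le N_r\}$ the auxiliary game contributes the frozen value $\tfrac{N_r-T+1}{N_r}v(x_T)$, whereas under $\bar\tau$ the original continuation has conditional expectation at most $\tfrac{N_r-T+1}{N_r}v_{N_r-T+1}(x_T)$. Since $\sigma$ guarantees $\gamma_{N_r}(x_1,\sigma,\bar\tau)\ge v_{N_r}(x_1)$, this gives
\[
\gamma^\theta_{N_r}(x_1,\sigma,\tau)-v_{N_r}(x_1)\ \ge\ \frac1{N_r}\,\EE\big[\1_{\{T\le N_r\}}\,(N_r-T+1)\,(v(x_T)-v_{N_r-T+1}(x_T))\big],
\]
where the expectation is over the law of the play up to $T$, common to both games.

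Finally I would bound the error term by splitting on the remaining horizon $N_r-T+1$. When $N_r-T+1\ge M_\eta$, the bracket is $\ge-\eta$ by the first consequence of total boundedness; when $N_r-T+1<M_\eta$, the weight $\tfrac{N_r-T+1}{N_r}$ is at most $M_\eta/N_r$ while the bracket is at least $-2$, so that part contributes at least $-2M_\eta/N_r\ge-\eta$ by the choice $N_r\ge 2M_\eta/\eta$. Hence the error term is at least $-2\eta$, so $\gamma^\theta_{N_r}(x_1,\sigma,\tau)\ge v_{N_r}(x_1)-2\eta\ge v(x_1)-4\eta$ for every $\tau$, which gives $v^\theta_{N_r}(x_1)\ge v(x_1)-4\eta$ with $N_r\le n_0$. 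The main obstacle is exactly this last comparison: freezing at the optimistic value $v=\limsup_n v_n$ need not dominate the genuine continuation value over a short remaining horizon. It is overcome by the uniform upper bound $v_m\le v+\eta$ for large $m$ (from the compactness of $F$) together with choosing $N_r$ large enough that the short-remaining-horizon tail is negligible.
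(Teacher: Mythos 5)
Your proof is correct, and its overall architecture coincides with the paper's: a finite $\eta$-net of the compact set $F$ of limit points, a threshold beyond which every $v_m$ lies within $\eta$ of $F$ and hence satisfies $v_m\le v+\eta$ (your $M_\eta$ plays the role of the paper's $n(\eta)$), horizons $N_r$ chosen both close to the net points and large relative to the threshold, the estimate $v_{N_r}(x_1)\ge v(x_1)-2\eta$, and finally the comparison $v^\theta_{N_r}(x_1)\ge v_{N_r}(x_1)-2\eta$ obtained by splitting at the hitting time of $\{\theta=1\}$ into the two regimes of long remaining horizon (where $v_m\le v+\eta$ applies) and short remaining horizon (where the weight $\frac{N_r-T+1}{N_r}$ is at most $M_\eta/N_r$ and hence negligible). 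The one point where you genuinely diverge is the justification of this comparison inequality: the paper derives it from a stopping-time form of the Shapley equation, stated as an exact max-min identity for $v_{N_r}(x_1)$ at the stopping time $\rho^{N_r}_\theta$ and justified only as ``an adaptation of standard proof technique'', whereas you prove just the one-sided inequality actually needed, by an explicit construction: play player 1's optimal strategy of $\Gamma_{N_r}(x_1)$ inside $\Gamma^\theta$, and against any $\tau$ build $\bar\tau$ in $\Gamma$ that copies $\tau$ up to $T$ and then switches to player 2's optimal strategy of the continuation game $\Gamma_{N_r-T+1}(x_T)$, exploiting that the laws of play agree before $T$. Your variant is more self-contained, using nothing beyond the existence of optimal strategies in finite-horizon games and the coupling, at the price of writing out the concatenated strategy; the paper's route is shorter on the page but delegates the dynamic-programming identity to the reader. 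The bookkeeping also matches: $2\eta+2\eta=4\eta$ for you, versus $\tfrac{3}{2}\eta+2\eta\le 4\eta$ in the paper.
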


\begin{proof} \ Let $\eta>0$ be fixed and $F_R=\{f_1,...,f_R\}\subseteq F$ be a finite cover of size $\frac{\eta}{2}$ of the set $F$. As $\{v_n, n\geq 1\}$ is totally bounded, there exists some stage $n(\eta)\in\NN$, after which any $n$-stage value $v_n$ is $\frac{\eta}{2}$-close to $F$ its set of accumulation points, hence $\eta$-close to $F_R$:
\begin{equation}\label{eqA}
\exists n(\eta)\in\NN, \ \forall n\geq n(\eta), \ \exists f_r\in\{f_1,...,f_R\}, \ s.t. \ \ ||v_n-f_r||_\infty\leq \eta;
\end{equation}
Moreover for every $r\in\{1,..,R\}$, $f_r$ is an accumulation point of $\{v_n, n\geq 1\}$, therefore there exists some  $n_r> \frac{n(\eta)}{\eta}$ such that $v_{n_r}$ is $\eta$-close to $f_r$:
\begin{equation}\label{eqB}
\forall f_r\in F_R, \ \exists n_r > \frac{n(\eta)}{\eta}, \ s.t. \ \ ||v_{n_r}-f_r||_\infty\leq \eta.
\end{equation}
Finally we take $n_0=\max\{n_r: 1\leq r\leq R\}$. {\color{rouge}
The integers ${n_r}$ are chosen such that when absorption in $X^*_{\theta}$ occurs in the game of length $n_r$ the remaining number of stages is either a fraction smaller than $\eta$ of the total length of the game or greater than $n(\eta)$ and Equations (\ref{eqA}) applies.}

Let $x_1\in X^0_\theta$ be any non-absorbing state in the auxiliary game $\Gamma^\theta$. By compactness of $F$, there exists $f\in F$ such that $f(x_1)=v(x_1)$ and $f_r\in F_R$ with $||f-f_r||_\infty\leq \frac{\eta}{2}$. In particular at state $x_1$,
\begin{equation*}
f_r(x_1)\geq f(x_1)-\frac{\eta}{2}= v(x_1)-\frac{\eta}{2}, 
\end{equation*}
which together with (\ref{eqB}) implies that
\begin{equation}\label{eq2eta}
v_{n_r}(x_1) \geq f_r(x_1)-\eta\geq v(x_1)-\frac{3}{2}\eta.  
\end{equation}


We now prove that
\begin{equation}\label{eq4eta}
v^\theta_{n_r}(x_1)\geq v_{n_r}(x_1)-2\eta.
\end{equation}
\noindent Denote by $$\rho_\theta=\inf_{t\geq 1}\{x_t\in X^*_\theta\}=\inf_{t\geq 1} \{x_t\in X^*  \text{ or } \theta(x_t)=1\}$$ the stopping time associated to absorption in $\Gamma^\theta$, and set $\rho_\theta^{n_r}=\min (\rho_\theta, n_r )$. {\color{rouge} An adaptation of standard proof technique of the Shapley equation} gives us:
\[
v_{n_r}(x_1)= \max_{\sigma \in \Sigma} \min_{\tau \in \T} \EE_{x_1,\sigma,\tau} \left( \frac{1}{n_r} \left( \sum_{t=1}^{\rho^{n_r}_\theta-1} g(x_t) \right) +\frac{n_r-\rho^{n_r}_\theta+1}{n_r} v_{n_r-\rho^{n_r}_\theta+1}(x_{\rho^{n_r}_\theta}) \right).
\]
\noindent We separate the histories into two sets depending on whether $n_r-\rho^{n_r}_\theta(h)+1 > n(\eta)$ in which cases Equation (\ref{eqA}) applies, or $n_r-\rho^{n_r}_\theta(h)+1 \leq n(\eta)$ in which cases $\frac{n_r-\rho^{n_r}_\theta(h)+1}{n_r} \leq \eta$ \big(by definition $n_r\geq \frac{n(\eta)}{\eta}$\big), and deduce that
\begin{align*}
v_{n_r}(x_1) \leq \max_{\sigma \in \Sigma} \min_{\tau \in \T } \EE_{x_1,\sigma,\tau}\left( \frac{1}{n_r} \sum_{t=1}^{\rho^{n_r}_\theta-1} g(x_t) +\frac{n_r-\rho^{n_r}_\theta+1}{n_r} f'_h(x_{\rho^{n_r}_\theta}) \right) + 2\eta,
\end{align*}
\noindent with $f'_h\in F_R$ depending on the history given by Equation ($\ref{eqA}$) applied to $v_{n_r-\rho^{n_r}_\theta+1}$ when $n_r-\rho^{n_r}_\theta+1> n(\eta)$, and any function in $F_r$ otherwise. Therefore, by considering $v$ as the supremum of $f\in F$ at each point $x_{\rho_\theta^{n_r}}\in X$, we have $f'_h(x_{\rho_\theta^{n_r}})\leq v(x_{\rho_\theta^{n_r}})$, thus
\begin{align*}
v_{n_r}(x_1) & \leq \max_{\sigma \in \Sigma} \min_{\tau \in \T } \EE_{x_1,\sigma,\tau}\left( \frac{1}{n_r} \sum_{t=1}^{\rho^{n_r}_\theta-1} g(x_t) +\frac{n_r-\rho^{n_r}_\theta+1}{n_r} v(x_{\rho^{n_r}_\theta}) \right) + 2\eta\\
& = v_{n_r}^\theta(x_1)+2\eta.
\end{align*}
This proves inequality (\ref{eq4eta}). We now use Equation (\ref{eq2eta}) and Equation (\ref{eq4eta}) to conclude:
\[
{\color{rouge} v^\theta_{n_r}(x_1)\geq v(x_1)-4\eta.}
\]
It means that for each $x_1\in X_\theta^0$, there exists $n(x_1):=n_r\leq n_0=\max\{n_r:\ 1\leq r\leq R\}$, such that {\color{rouge} $v^\theta_{n(x_1)}(x_1)\geq v(x_1)-4\eta$}.\end{proof}.  

{\color{violet2}

\begin{remarque}
Proposition \ref{posaux} is also true if $\theta$ is a deterministic stopping time and not only a function on the state. The auxiliary game would be defined on a larger state space: the set of finite histories of the original game. The proof in itself is similar.
\end{remarque}

Fix now any $\varepsilon>0$ and define $\theta_\varepsilon: X\rightarrow \{0,1\}$ such that {\color{jaune} $\{x\in X,\ \theta_\varepsilon(x)=1\}=\{x \in X, v(x)< \varepsilon\}$}. We denote by {\color{jaune} $\Gamma^\varepsilon=\langle A, B, X=X^0_\varepsilon\bigcup X^*_\varepsilon, q_\varepsilon,g_\varepsilon\rangle$} {\color{jaune} the auxiliary game associated to $\Gamma$ defined by the stopping time $\theta_\varepsilon$}. 
}
\begin{corollary}\label{mart}
In the game $\Gamma^\varepsilon$, Player 1 uniformly guarantees $v$  with uniformly terminating strategies that depend only on past states.
\end{corollary}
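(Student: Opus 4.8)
The plan is to recognize Corollary \ref{mart} as an immediate consequence of the two preceding propositions applied to the fixed auxiliary game $\Gamma^\varepsilon$: first use Proposition \ref{posaux} to show that $\Gamma^\varepsilon$ is \emph{positive-valued}, and then feed this into Proposition \ref{posstage}. The key structural observation is that $\theta_\varepsilon$ has been chosen precisely so that every state surviving as active in $\Gamma^\varepsilon$, i.e. every $x \in X^0_\varepsilon=\{x\in X^0: v(x)\geq \varepsilon\}$, already has $v(x)\geq \varepsilon$; this lower bound is what will make the game positive-valued.

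Concretely, to verify that player $1$ uniformly guarantees $v$, I would fix an arbitrary tolerance $\delta>0$ (renamed to avoid clashing with the fixed $\varepsilon$ defining $\Gamma^\varepsilon$) and choose an auxiliary $\eta>0$ small enough that $\eta<\varepsilon/4$ and $5\eta\leq\delta$. Applying Proposition \ref{posaux} to $\theta_\varepsilon$ yields an integer $n_0$ and a map $n(\cdot):X^0_\varepsilon\to\{1,\dots,n_0\}$ with $v^\varepsilon_{n(x_1)}(x_1)\geq v(x_1)-4\eta$ for every $x_1\in X^0_\varepsilon$, where $v^\varepsilon_n$ denotes the $n$-stage value of $\Gamma^\varepsilon$. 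Since $v(x_1)\geq\varepsilon$ on $X^0_\varepsilon$, this gives $v^\varepsilon_{n(x_1)}(x_1)\geq \varepsilon-4\eta=:M>0$ uniformly, so the triple $(M,n_0,n(\cdot))$ witnesses that $\Gamma^\varepsilon$ is positive-valued in the sense of the Definition.

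It then remains to apply Proposition \ref{posstage} to $\Gamma^\varepsilon$ with the data $(M,n_0,n(\cdot))$: player $1$ obtains a strategy $\sigma^*\in\widehat{\Sigma}$ (hence depending only on past states) and an integer $N_0$ such that $\sigma^*$ is uniformly terminating and the $n$-stage payoff in $\Gamma^\varepsilon$ satisfies $\gamma_n(x_1,\sigma^*,\tau)\geq v^\varepsilon_{n(x_1)}(x_1)-\eta$ for all $n\geq N_0$, all $\tau\in\T$, and all $x_1\in X^0_\varepsilon$. Chaining this with the bound from Proposition \ref{posaux} gives $\gamma_n(x_1,\sigma^*,\tau)\geq v(x_1)-4\eta-\eta\geq v(x_1)-\delta$. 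As $\delta>0$ was arbitrary and $\sigma^*$ is uniformly terminating and state-dependent, player $1$ uniformly guarantees $v$ in $\Gamma^\varepsilon$ with strategies of the required type.

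I do not expect any genuine analytic obstacle here, since the substantive work is already contained in Propositions \ref{posaux} and \ref{posstage}. The only point requiring care is the bookkeeping of the two small parameters: the \emph{fixed} $\varepsilon$ determines the absorbing region of $\Gamma^\varepsilon$ and supplies the uniform lower bound $M=\varepsilon-4\eta$ that makes the game positive-valued, whereas the \emph{auxiliary} $\eta$, together with the optimization slack coming from Proposition \ref{posstage}, must be taken arbitrarily small (for each target tolerance $\delta$) in order to recover the exact value $v$ rather than $v-4\eta$.
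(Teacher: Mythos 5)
Your proposal is correct and follows essentially the same route as the paper: apply Proposition \ref{posaux} with $\theta_\varepsilon$ to get $v^\varepsilon_{n(x_1)}(x_1)\geq v(x_1)-4\eta\geq \varepsilon-4\eta>0$ on $X^0_\varepsilon$, conclude that $\Gamma^\varepsilon$ is positive-valued, then invoke Proposition \ref{posstage} and chain the two inequalities. The only differences are cosmetic (the paper takes $\eta\leq\varepsilon/8$ so that $M=\varepsilon/2$, while you take $\eta<\varepsilon/4$ and track an explicit target tolerance $\delta$), and your bookkeeping is sound.
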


\begin{proof}
{\color{rouge}
Let $\eta \in (0,\varepsilon/8]$, by Proposition \ref{posaux} there exists $n_0 \geq 1$ such that for every $x_1 \in X_\varepsilon^0$, there exists $n(x_1)\leq n_0$ with
\begin{align}\label{toto2}
v^\varepsilon_{n(x_1)}(x_1) \geq v(x_1)-4\eta \geq \varepsilon/2,
\end{align}
where the second inequality comes from the definition of $X_\varepsilon^0$. Therefore, $\Gamma^\varepsilon$ is a positive-valued recursive game and by Proposition \ref{posstage}, player $1$ uniformly guarantees $v^\varepsilon_{n(\cdot)}(\cdot)$ with uniformly terminating strategies in $\widehat{\Sigma}$. By Equation (\ref{toto2}), it follows that for every $\eta>0$, player $1$ uniformly guarantees $v -4\eta$ with uniformly terminating strategies.
}
\end{proof} \\

{\color{vert} Fix now a strategy $\sigma^*_\varepsilon$ that is uniformly terminating in $\Gamma^{\varepsilon}$, depends only on past states and guarantees $v(x_1)-\varepsilon^2$ in $\Gamma^\varepsilon(x_1)$ for every $x_1 \in X^0_{\varepsilon}$.}

\subsubsection{One-shot game}\label{osg}

\noindent \textbf{One-shot game } $G^f$ \  \ For each $f:X\to[-1,+1]$ and $x_1 \in X$, we define the one-shot game {\color{vert}$G^f$} as follows: player 1's {\color{vert} action set is $A(x_1)$, player 2's action set is $B(x_1)$}, and the payoff is for each $(s,t)\in \De\big(A \big)\times \De \left(B \right)$,
\begin{equation*}
{\color{vert}\EE_{q(x_1,s,t)}[f(x_2)]}=\sum_{a\in A, b\in B}s(a)t(b)\left(\sum_{x_2\in X}q(x_1,a,b)(x_2)f(x_2)\right).
\end{equation*}

\begin{lemme}\label{oneshotV}
For any limit point $f\in F$, the one-shot game $G^f$ has a value equal to $f$.
\end{lemme}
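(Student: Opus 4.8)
The plan is to identify $f$ as a fixed point of the one-shot value operator, using the recursive (Shapley) formula for the $n$-stage values together with the non-expansiveness of that operator. Write $T(w)(x)$ for the value of the finite matrix game $G^w$ at state $x$; since each $A(x),B(x)$ is finite, $T(w)(x)$ exists by the minmax theorem, so $G^f$ certainly has a value and the only real task is to identify it with $f$. The first fact I would record is that $T$ is non-expansive for $\|\cdot\|_\infty$: for a pure action pair the entry $\EE_{q(x,a,b)}[w]$ changes by at most $\|w-w'\|_\infty$ when $w$ is replaced by $w'$, because $q(x,a,b)$ is a probability measure; since the value of a matrix game is $1$-Lipschitz in its entries, $\|T(w)-T(w')\|_\infty\le\|w-w'\|_\infty$. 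The second ingredient is the standard recursion for recursive games: on absorbing states $v_n(x)=g(x)$ for all $n$, while on active states $g\equiv 0$ forces $v_n(x)=\frac{n-1}{n}\,T(v_{n-1})(x)$.

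With these in hand, fix $f\in F$ and a subsequence with $v_{n_k}\to f$ uniformly. On an absorbing state $x$ we have $v_n(x)=g(x)$ for all $n$, so $f(x)=g(x)=T(f)(x)$ is immediate. On an active state $x$ I would pass to the limit in $v_{n_k+1}(x)=\frac{n_k}{n_k+1}\,T(v_{n_k})(x)$: the prefactor tends to $1$ and $T(v_{n_k})(x)\to T(f)(x)$ by non-expansiveness, so if the left-hand side also tends to $f(x)$ we obtain $T(f)(x)=f(x)$, which is exactly $\mathrm{val}(G^f)=f$.

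I expect the main obstacle to be precisely that last point: a priori $v_{n_k+1}$ need not converge to $f$, since only $v_{n_k}\to f$ was assumed. I would close this gap by proving that consecutive values become uniformly close, $\|v_{n+1}-v_n\|_\infty\to 0$. Setting $d_n=\|v_{n+1}-v_n\|_\infty$, the recursion together with non-expansiveness gives, on active states, $v_{n+1}(x)-v_n(x)=\frac{n}{n+1}\bigl(T(v_n)(x)-T(v_{n-1})(x)\bigr)+\frac{1}{n(n+1)}T(v_{n-1})(x)$, whence $d_n\le\frac{n}{n+1}d_{n-1}+\frac{1}{n(n+1)}$ (the increment vanishes on absorbing states, so the sup is controlled uniformly over the countable state space). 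Multiplying by $n+1$ and writing $e_n=(n+1)d_n$ turns this into $e_n\le e_{n-1}+\frac1n$, so $e_n=O(\log n)$ and hence $d_n=O(\log n/n)\to 0$. Consequently $\|v_{n_k+1}-f\|_\infty\le d_{n_k}+\|v_{n_k}-f\|_\infty\to 0$, which supplies the missing convergence and completes the identification $T(f)=f$, that is, the value of $G^f$ equals $f$.
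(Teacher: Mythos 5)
Your proof is correct, and its skeleton is the same as the paper's: combine the Shapley recursion for $v_{n+1}$ with the fact that consecutive values become uniformly close, then pass to the limit along a subsequence converging uniformly to $f$. The one genuine difference is how that closeness is obtained. The paper simply cites Vigeral's general lemma $\|v_n-v_{n+1}\|_\infty\le \frac{2}{n+1}$, valid for arbitrary stochastic games, and concludes immediately; you instead derive the estimate from scratch, exploiting the recursive structure (zero payoff on active states, so $v_{n+1}=\frac{n}{n+1}T(v_n)$ there) to get the recursion $d_n\le\frac{n}{n+1}d_{n-1}+\frac{1}{n(n+1)}$ and hence $d_n=O(\log n/n)$. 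Your bound is weaker than the $O(1/n)$ rate but entirely sufficient, and it makes the lemma self-contained at the cost of being tied to the recursive-game structure, whereas the paper's citation is shorter and its bound sharper and model-independent. Both arguments then conclude identically: $v_{n_k+1}\to f$ uniformly, the prefactor $\frac{n_k}{n_k+1}\to 1$, and non-expansiveness of the one-shot value operator (which the paper uses implicitly when ``taking the limit'' in Shapley's formula, and which you prove explicitly) yields $T(f)=f$, with the absorbing states handled trivially.
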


{\color{bleu}

\begin{proof}
Let $n\geq 1$, {\color{vert}it is known that} (cf. Vigeral \cite{Vigeral_2009} p.40, Lemma 4.2.2)
\[
\|v_n-v_{n+1}\|_\infty \leq \frac{2}{n+1},
\]
and by Shapley's formula that
\begin{align*}
v_{n+1}(x_1) & = \sup_{s \in \De\big(A(x_1)\big)} \inf_{t \in \De \left( B(x_1) \right) } \EE_{q(x_1,s,t)} \left[ \frac{1}{n+1} g(x_1) +\frac{n}{n+1} v_{n}(x_{2}) \right]\\
                                 & = \inf_{t \in \De \left( B(x_1) \right)}  \sup_{s \in \De\big(A(x_1)\big)} \EE_{q(x_1,s,t)} \left[ \frac{1}{n+1} g(x_1) +\frac{n}{n+1} v_{n}(x_{2}) \right].
\end{align*}

{\color{rouge} We obtain the result }by taking the limit along a subsequence converging uniformly to $f\in F$. 
\end{proof}\\

{\color{vert} Following Proposition \ref{prop:limsup}, one can take for each $x\in X$ some $f^* \in F$ such that $v(x)=f^*(x)\geq f(x), \forall f\in F$. Then the following result is a direct consequence of Lemma \ref{oneshotV} }. }

\begin{corollary}\label{coro:one-shot}
For every $x_1\in X$, there exists $s^*(x_1) \in \Delta(A(x_1))$ such that
\[
\forall b\in B(x_1), \ \EE_{q(x_1,s^{*}(x_1),b)} \left[ v(x_{2}) \right] \geq v(x_1).
\]
\end{corollary}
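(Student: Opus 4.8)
The plan is to combine Proposition \ref{prop:limsup} with Lemma \ref{oneshotV}, bridging the gap between a single limit point and the full function $v$ by exploiting the pointwise domination $v \geq f$ valid for every $f \in F$. First I would fix $x_1 \in X$. By Proposition \ref{prop:limsup}, since $F$ is compact and non-empty and $v(x_1) = \max_{f \in F} f(x_1)$, there exists some $f^* \in F$ attaining this maximum, i.e. $f^*(x_1) = v(x_1)$; moreover $f^*(x) \leq v(x)$ for every $x \in X$, again by definition of $v$ as the pointwise maximum over $F$.

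Next I would apply Lemma \ref{oneshotV} to this particular limit point $f^*$: the one-shot game $G^{f^*}$ with action sets $A(x_1)$ and $B(x_1)$ has value equal to $f^*(x_1)$. In particular player $1$ has an optimal mixed action $s^*(x_1) \in \Delta(A(x_1))$ securing the value against every pure reply, so that for every $b \in B(x_1)$,
\[
\EE_{q(x_1,s^*(x_1),b)}[f^*(x_2)] \geq f^*(x_1) = v(x_1).
\]
This is where the dependence of $s^*(x_1)$ on $x_1$ enters: the optimal action is tailored to the single function $f^*$ that happens to attain the supremum at $x_1$.

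The final and essentially only delicate step is to upgrade this inequality from $f^*$ to $v$ inside the expectation. Since $f^*(x) \leq v(x)$ for all $x \in X$ and the transition $q(x_1,s^*(x_1),b)$ acts as a positive linear operator, monotonicity of the expectation gives
\[
\EE_{q(x_1,s^*(x_1),b)}[v(x_2)] \geq \EE_{q(x_1,s^*(x_1),b)}[f^*(x_2)] \geq v(x_1),
\]
which is exactly the claimed inequality. The point to watch is that although $s^*(x_1)$ is optimal only for $f^*$, the conclusion is required for the larger function $v$; this causes no difficulty precisely because $v$ dominates $f^*$ everywhere, so pushing the inequality through the positive operator $q(x_1,s^*(x_1),b)$ preserves it. No uniform control over the choice of $f^*$ as $x_1$ varies is needed, since $s^*(x_1)$ is permitted to depend on the state.
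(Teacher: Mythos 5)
Your proposal is correct and follows essentially the same route as the paper: the paper likewise fixes, via Proposition \ref{prop:limsup}, a limit point $f^*\in F$ attaining $v(x_1)=f^*(x_1)$, invokes Lemma \ref{oneshotV} to get an optimal mixed action for player 1 in the one-shot game $G^{f^*}$, and concludes by the pointwise domination $f^*\leq v$ pushed through the expectation. The only difference is that the paper states this as a ``direct consequence'' and leaves the monotonicity step implicit, whereas you spell it out explicitly.
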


%
%
%
%
%

{\color{rouge} Fix now $s^*:=\big(s^*(x_1)\big)_{x_1\in X}$ a profile of strategies satisfying the conclusion of Corollary \ref{coro:one-shot}.}

\subsubsection{Optimal strategy}

{\color{vert} Roughly speaking, we build $\bar{\sigma}$ a uniform $\varepsilon$-optimal strategy for player $1$} to play $\sigma^*_\varepsilon$ in $\Gamma^\varepsilon$ on the states with value $v$ above $2\varepsilon$, and to play $s^*$ in $G^v$ on the states with value $v$ below $\varepsilon$. And for the states with value $v$ between $\varepsilon$ and $2\varepsilon$, $\bar{\sigma}$ will be either of the two depending on the regime. \\

\noindent \textbf{Construction of $\bar{\sigma}$}\ \  Define a sequence of stopping times $(u_l)_{l\geq 1}$ and the concatenated strategy $\overline{\sigma}:=s^*u_1\sigma^*_\varepsilon u_2 s^* u_3 \sigma^*_\varepsilon u_4\cdot\cdot\cdot$ in $\Gamma$ as follows:

\begin{itemize}
\item 
$\overline{\sigma}$ is to play $s^{*}(x_n)$ at each stage $n$ up to stage (not included)
\[
u_1=\inf \{ n\geq 1, \ v(x_n) > 2\varepsilon\};
\]
and then to play $\sigma^*_\varepsilon(x_{u_1})$ up to stage (not included)
\[
u_2=\inf \{ n\geq u_1, \ v(x_n) < \varepsilon\}.
\]

\item In general: for each $r\geq 1$, $\overline{\sigma}$ is to play $\sigma_\varepsilon^*(x_{u_{2r-1}})$ from stage $u_{2r-1}$ ({\color{vert} \textit{the odd phase}}) up to stage (not included) 
\[
u_{2r}=\inf \{ n\geq u_{2r-1}, \ v(x_n) < \varepsilon\}.
\]
and then to play $s^{*}(x_{n})$ at each stage $n\geq u_{2r}$ ({\color{vert} \textit{the even phase}}), up to stage (not included)
\[
u_{2r+1}=\inf \{ n\geq u_{2r}, \ v(x_n) > 2\varepsilon \}.
\]
\end{itemize}
%
 


\begin{remarque}  {\color{jaune}
The idea of alternating between two types of strategies is common in Rosenberg and Vieille \cite{Rosenberg_2000}, Solan and Vieille \cite{Solan_2002} and this article. The main difference is  the definition of the target function $v$ used to define how to switch from one type of strategies to the other. Rosenberg and Vieille \cite{Rosenberg_2000} use the limit of discounted values and $\sigma^*_\varepsilon$ is an optimal strategy in some $\lambda$-discounted game (for $\lambda$ close to zero). Solan and Vieille \cite{Solan_2002} use the limsup value and introduce an auxiliary positive-valued game. We adopt a similar approach to Solan and Vieille \cite{Solan_2002} but with $v$ the largest limit point of $(v_n)$.
}

\end{remarque}

{\color{vert} \noindent\ By construction, $\bar{\sigma}$ depends on the histories only through the states and not the actions. Let us show that $\bar{\sigma}$ uniformly guarantees $v-25\varepsilon$ for player $1$, which finishes the proof of Theorem \ref{theo1}. }\\

{\color{vert} Fix from now on any $x_1\in X$}. Recall that $\rho$ denotes the absorption time in the game $\Gamma$. The next result shows that the process $(v(x_{\min(\rho,u_{l})}))_{l\geq 1}$, which is the value of $v$ at switching times $(u_l)$, is almost a submartingale up to an error of $\varepsilon^2$.%

%
\begin{proposition}\label{submartingale}
For every $l\geq 1$ and every $\tau \in \T$:
\begin{equation*}
\EE_{x_1,\overline{\sigma},\tau}[v(x_{\min(\rho,u_{l+1})})|\mathcal{H}_{\min(\rho,u_l)}]\geq v(x_{\min(\rho,u_l)})-\varepsilon^2\mathds{1}_{\rho>u_l},
\end{equation*}

\noindent on the event $\min(\rho,u_l)<+\infty$.\\
\end{proposition}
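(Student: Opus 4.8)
The plan is to condition on $\mathcal{H}_{\min(\rho,u_l)}$ and to split according to whether absorption has already occurred. On the event $\{\rho\leq u_l\}$ one has $\min(\rho,u_{l+1})=\min(\rho,u_l)=\rho$ and $\mathds{1}_{\rho>u_l}=0$, so both sides of the claimed inequality equal $v(x_\rho)$ and there is nothing to prove. I therefore restrict attention to $\{\rho>u_l\}$, on which $\min(\rho,u_l)=u_l<+\infty$. Since $\overline{\sigma}$ depends only on the current state and is a concatenation that restarts at $x_{u_l}$, the strong Markov property identifies the conditional law of the continuation given $\mathcal{H}_{u_l}$ with that of the game started at $x_{u_l}$, in which player $1$ follows the phase prescribed at stage $u_l$ against an arbitrary continuation strategy of player $2$. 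It then suffices to bound $\EE[v(x_{\min(\rho,u_{l+1})})\mid\mathcal{H}_{u_l}]$ from below uniformly in $\tau$, distinguishing the parity of $l$.

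For $l$ even, the phase $[u_l,u_{l+1})$ is an even phase during which player $1$ plays $s^*$ at every stage. By Corollary \ref{coro:one-shot}, $\EE_{q(x,s^*(x),b)}[v(x_2)]\geq v(x)$ for every state $x$ and every $b\in B(x)$, with equality at absorbing states since $v$ is then constant; hence, mixing over $\tau$, one gets $\EE[v(x_{n+1})\mid\mathcal{H}_n]\geq v(x_n)$ at each stage of the phase. Consequently the stopped process $\big(v(x_{\min(\rho,u_{l+1},n)})\big)_{n\geq u_l}$ is a bounded submartingale under $\PP_{x_1,\overline{\sigma},\tau}$, and optional stopping (interpreting, on the null-or-not event $\{\min(\rho,u_{l+1})=+\infty\}$, $v(x_{\min(\rho,u_{l+1})})$ as the almost sure limit of the submartingale) yields $\EE[v(x_{\min(\rho,u_{l+1})})\mid\mathcal{H}_{u_l}]\geq v(x_{u_l})$. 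This is stronger than required, since $\varepsilon^2\mathds{1}_{\rho>u_l}\geq 0$.

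For $l$ odd, the phase $[u_l,u_{l+1})$ is an odd phase during which player $1$ plays $\sigma^*_\varepsilon$ started from $x_{u_l}$, and at stage $u_l$ one has $v(x_{u_l})>2\varepsilon>\varepsilon$, so $x_{u_l}\in X^0_\varepsilon$. The key observation is that the absorption time of $\Gamma^\varepsilon$ started at $u_l$ coincides with $\min(\rho,u_{l+1})$: absorption in $\Gamma^\varepsilon$ occurs either at a genuine absorbing state of $X^*$ (at time $\rho$) or at a state with $v<\varepsilon$ (at time $u_{l+1}=\inf\{n\geq u_l,\ v(x_n)<\varepsilon\}$), whichever comes first. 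Moreover the $\Gamma^\varepsilon$-absorbing payoff at that state equals $v(x_{\min(\rho,u_{l+1})})$ in both cases: it is $g(x_\rho)=v(x_\rho)$ on $\{\rho\leq u_{l+1}\}$ since $v=g$ on $X^*$, and it is $v(x_{u_{l+1}})$ on $\{u_{l+1}<\rho\}$ by the very definition of $g_\varepsilon$ on the states with $v<\varepsilon$. Since $\sigma^*_\varepsilon$ is uniformly terminating in $\Gamma^\varepsilon$, the $n$-stage average payoff of $\sigma^*_\varepsilon$ against $\tau$ converges as $n\to\infty$ to the expected absorbing payoff $\EE[v(x_{\min(\rho,u_{l+1})})\mid\mathcal{H}_{u_l}]$, and because $\sigma^*_\varepsilon$ guarantees $v(x_{u_l})-\varepsilon^2$ in $\Gamma^\varepsilon(x_{u_l})$, passing to the limit gives $\EE[v(x_{\min(\rho,u_{l+1})})\mid\mathcal{H}_{u_l}]\geq v(x_{u_l})-\varepsilon^2$, which is exactly the asserted inequality since $\mathds{1}_{\rho>u_l}=1$ on this event.

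The main obstacle is the odd-phase step. One must carefully justify the identification of the $\Gamma^\varepsilon$-absorption time with $\min(\rho,u_{l+1})$ and of its payoff with $v(x_{\min(\rho,u_{l+1})})$, and then convert the long-run \emph{average} guarantee of $\sigma^*_\varepsilon$ into a bound on the expected \emph{terminal} value of $v$. This conversion rests on the uniform termination of $\sigma^*_\varepsilon$ (so that the average payoff genuinely converges to the expected absorbing payoff, uniformly over $\tau$) together with the strong Markov property used to restart the argument from the random state $x_{u_l}$; the even-phase step, by contrast, is a routine application of optional stopping to the submartingale produced by $s^*$.
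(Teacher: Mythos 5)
Your proposal is correct and follows essentially the same route as the paper's proof: the trivial case $\rho\leq u_l$, the even phase handled by Corollary \ref{coro:one-shot} plus optional stopping of the resulting bounded submartingale, and the odd phase handled by identifying the $\Gamma^\varepsilon$-absorption time with $\min(\rho,u_{l+1})$ and its absorbing payoff $g_\varepsilon(x_{\rho^\varepsilon})$ with $v(x_{\min(\rho,u_{l+1})})$, then converting the $n$-stage average guarantee $v(x_{u_l})-\varepsilon^2$ of $\sigma^*_\varepsilon$ into a bound on the expected terminal value (the paper does this conversion via ``payoff is zero before absorption'' plus dominated convergence, which is the same mechanism you attribute to uniform termination, since that is what makes $\rho^\varepsilon$ almost surely finite).
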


\begin{proof} Take any $\tau$ in $\T$. The result is true if $\rho\leq u_l$. Suppose that $l$ is even and $\rho>u_l$: by construction the strategy $(s^*(x_n))$ is used during the phrase $n\in\{u_l,...,u_{l+1}-1\}$, thus:
\begin{equation*}
\EE_{x_1,\overline{\sigma},\tau}[v(x_{n+1})|\mathcal{H}_{n}]\geq v(x_n)
, \textit{ for all } u_l\leq n<\min(\rho,u_{l+1}).
\end{equation*}
Therefore $(v(x_n))$ is a bounded submartingale and by Doob's stopping theorem,
\begin{equation*}
\EE_{x_1,\overline{\sigma},\tau}[v(x_{\min(\rho,u_{l+1})})|\mathcal{H}_{\min(\rho,u_l)}]\geq v(x_{\min(\rho,u_l)}).
\end{equation*}
\noindent Suppose that $l$ is odd and $\rho>u_l$. By construction, player $1$ is using  $\sigma_\varepsilon^*(x_{u_l})$, which uniformly guarantees $v(x_{u_l})-\varepsilon^2$ in the auxiliary game $\Gamma^\varepsilon(x_{u_l})$:
\begin{align}\label{eq:toto_1}
\exists N_0\geq 1, \ \ \EE_{x_1,\bar{\sigma},\tau}\left[\frac{1}{n}\sum_{t=u_l+1}^{u_l+n} g_\varepsilon(x_t)|\mathcal{H}_{u_l}\right]\geq v(x_{u_l})-\varepsilon^2 \text{ for all } n\geq N_0.
\end{align}
\noindent Denote by $\rho^\varepsilon=\min\{m\geq u_l+1: x_m\in X^*_\varepsilon\}$ the absorption time in $\Gamma^\varepsilon(x_{u_l})$. Since in recursive games the payoff is zero before absorption, we have
\begin{equation} \label{eq:toto_2}
\EE_{x_1,\bar{\sigma},\tau}\left[ g_\varepsilon(x_{\rho_\varepsilon})|\mathcal{H}_{u_l} \right] = \EE_{x_1,\bar{\sigma},\tau}\left[\lim_{n\to\infty} \frac{1}{n} \sum_{t=u_l+1}^{u_l+n} g_\varepsilon(x_t)|\mathcal{H}_{u_l}\right].
\end{equation}
\noindent By the dominated convergence theorem,
\begin{align}\label{eq:toto_3}
\EE_{x_1,\bar{\sigma},\tau}\left[\lim_{n\to\infty} \frac{1}{n} \sum_{t=u_l+1}^{u_l+n} g_\varepsilon(x_t) |\mathcal{H}_{u_l}\right] &= \lim_{n\to\infty} \EE_{x_1,\bar{\sigma},\tau}\left[\frac{1}{n} \sum_{t=u_l+1}^{u_l+n} g_\varepsilon(x_t) |\mathcal{H}_{u_l}\right].
\end{align}
\noindent We deduce from (\ref{eq:toto_1})-(\ref{eq:toto_3}) that
\begin{equation*}
 \EE_{x_1,\bar{\sigma},\tau}\left[ g_\varepsilon(x_{\rho^\varepsilon})|\mathcal{H}_{u_l}\right]\geq v(x_{u_l})-\varepsilon^2.
\end{equation*}
\noindent Moreover, $g_\varepsilon(x_{\rho^\varepsilon})=v(x_{\rho^\varepsilon})$ and conditionally on $\rho>u_l$, $\rho^\varepsilon=\min(u_{l+1},\rho)$. It follows that
%
%
\begin{align*}
\EE_{x_1,\overline{\sigma},\tau}[v(x_{\min(\rho,u_{l+1})})|\mathcal{H}_{u_l}]\geq v(x_{u_l})-\varepsilon^2.
\end{align*}

\end{proof}

\vspace{5mm}

\noindent Due to the possible error term $\varepsilon^2$, the sequence $(v(x_{\min(\rho,u_{l})}))_{l\geq 1}$ is not a submartingale. Nevertheless, one can prove a lemma similar to the usual upcrossing lemma for submartingale. \textcolor{bleu}{Indeed, the value is a martingale excepts if it crosses upwards the interval $[\varepsilon,2\varepsilon]$. When this happens, the value may decreases of at most $\varepsilon^2$.} 
With the submartingale property established in Proposition \ref{submartingale}, an easy adaptation of the standard result on upcrossing number of submartingale implies the following result, as was shown in Proposition 3 of Rosenberg and Vieille \cite{Rosenberg_2000}:

\begin{lemme}\label{upcross} Let $N=\sup\{p\geq 1: u_{2p-1}<+\infty\}$ be the number of times the process $(v(x_{u_l}))$ crosses upward the interval $[\varepsilon,2\varepsilon]$.For every $\tau \in \T$,
\begin{equation*}
\EE_{x_1,\overline{\sigma},\tau}[N]\leq\frac{1}{\varepsilon-\varepsilon^2}.
\end{equation*}
\end{lemme}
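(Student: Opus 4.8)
The plan is to adapt the classical upcrossing lemma for submartingales to the approximate submartingale established in Proposition~\ref{submartingale}, carefully tracking the cumulative error introduced by the $\varepsilon^2$ terms at the odd phases. Recall that $N$ counts the completed upcrossings of $[\varepsilon,2\varepsilon]$, i.e. the number of odd-indexed switching times $u_{2p-1}$ that are finite; each such upcrossing corresponds to the process $v(x_{u_l})$ starting an odd (terminating) phase at a level exceeding $2\varepsilon$ after having dropped below $\varepsilon$. The key observation is that by Proposition~\ref{submartingale} the process $\bigl(v(x_{\min(\rho,u_l)})\bigr)_{l\geq 1}$ decreases in conditional expectation by at most $\varepsilon^2$, and only on the event $\{\rho>u_l\}$ with $l$ odd (during the terminating phase); on even phases it is a genuine submartingale.

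First I would fix $\tau\in\T$ and an arbitrary horizon $L\geq 1$, and consider the stopped process evaluated at times $\min(\rho,u_l)$ for $l=1,\dots,L$. Summing the conditional inequalities of Proposition~\ref{submartingale} over $l$ and taking expectations, I obtain a telescoping bound: since the process lives in $[-1,1]$, its total expected increase over $L$ steps is bounded, while each upcrossing of $[\varepsilon,2\varepsilon]$ forces a genuine rise of at least $\varepsilon$ (from below $\varepsilon$ at an even switching time $u_{2p}$ up to above $2\varepsilon$ at the next odd time $u_{2p+1}$). The standard upcrossing argument then says that the expected number of such rises times the gap $\varepsilon$ is controlled by the expected total positive variation, which here is bounded by the supremum of $v$ minus the accumulated error. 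Concretely, each completed upcrossing contributes a rise of at least $\varepsilon$ to the positive part of the increments, whereas the downward drift is at most $\varepsilon^2$ per upcrossing (incurred during the intervening odd phase). Rearranging, $\EE_{x_1,\overline{\sigma},\tau}[N](\varepsilon-\varepsilon^2)$ is bounded by the total range of the bounded process, which can be normalized to $1$, giving $\EE_{x_1,\overline{\sigma},\tau}[N]\leq \frac{1}{\varepsilon-\varepsilon^2}$.

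The main technical point to handle carefully is the bookkeeping of where the $\varepsilon^2$ error is charged: it appears only on odd phases and only on $\{\rho>u_l\}$, so when I isolate one upcrossing (the transition from an even phase ending below $\varepsilon$ to the subsequent odd phase ending above $2\varepsilon$), I must attribute exactly one error term of size at most $\varepsilon^2$ to it. This is precisely the content of the cited adaptation in Proposition~3 of Rosenberg and Vieille~\cite{Rosenberg_2000}: the net gain per upcrossing is at least $\varepsilon-\varepsilon^2$ in conditional expectation rather than the usual $\varepsilon$. I would make this rigorous by defining, as in the classical proof, a predictable $\{0,1\}$-valued process that turns on at the start of each upcrossing (when the value first drops below $\varepsilon$) and off at its completion (when it exceeds $2\varepsilon$), and computing the expected value of the resulting transform of the approximate submartingale.

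The step I expect to be the main obstacle is verifying that the finiteness and integrability requirements are met so that Doob's optional stopping and the telescoping sum are valid on the possibly infinite sequence of switching times: the stopping times $u_l$ may be infinite on some plays, and $N$ itself could a priori be infinite, so I would first argue the bound for the truncated count $N\wedge L$ uniformly in $L$ and then pass to the limit by monotone convergence. Since the underlying process $v(x_{\min(\rho,u_l)})$ is uniformly bounded by $1$, dominated/monotone convergence applies cleanly, and the uniform bound $\frac{1}{\varepsilon-\varepsilon^2}$ survives the limit, independently of $x_1$ and $\tau$. This independence is what will later let us conclude that the terminating phases are almost surely finite in number and hence that $\overline{\sigma}$ eventually settles into a terminating phase leading to absorption.
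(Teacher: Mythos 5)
Your proposal is correct and follows essentially the same route as the paper: the paper does not spell out the argument itself but invokes exactly this adaptation of Doob's upcrossing inequality to the approximate submartingale of Proposition \ref{submartingale}, deferring the details to Proposition 3 of Rosenberg and Vieille \cite{Rosenberg_2000}. Your accounting --- one $\varepsilon^2$ error charged per started odd phase, a rise of at least $\varepsilon$ per completed upcrossing (so a net gain of $\varepsilon-\varepsilon^2$), the predictable transform with truncation and monotone convergence, and the positive-part normalization that reduces the $[-1,1]$-valued process to range at most $1$ --- is precisely the content of that cited adaptation.
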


\noindent By construction, $\sigma_\varepsilon^*$ is \textit{uniformly terminating} within the auxiliary absorbing states $X^*_\varepsilon$. That is to say, any play between stages $u_{2p-1}$ and $u_{2p}$ ({\color{orange2} on an odd phase}) has bounded length with high probability under the strategy $\sigma_\varepsilon^*(x_{u_{2p-1}})$, uniformly over any starting state $x_{u_{2p-1}}\in X^0_\varepsilon$. Since Lemma \ref{upcross} implies that {\color{orange2} the number of odd phases} is bounded in expectations, the total frequency of stages on all {\color{orange2} odd phases} is negligible for $n$ large. Let us formalize this fact.

{\color{vert} Recall that $\rho^\varepsilon$ denotes the absorption time in the auxiliary game $\Gamma^\varepsilon$}. It follows that there exists $N_1>0$ such that
\begin{equation}\label{eq:N_1}
\forall x\in X^0_\varepsilon \text{ and } \tau\in\mathcal{T}: \PP_{x,\sigma^*_\varepsilon(x),\tau}(\rho^\varepsilon>N_1)\leq \varepsilon^3.
\end{equation}
\noindent For each $n\in\NN$, define $A_n=\{u_{2p-1}\leq n< \min(\rho,u_{2p}),u_{2p-1}<\rho, \text{ for some } p\}\subseteq H_\infty$. These are all infinite plays where stage $n$ is {\color{vert} in an odd phrase}, $i.e.$, the stages between $u_{2p-1}$ and $u_{2p}$ on which $\sigma^*_\varepsilon(x_{u_{2p-1}})$ is used. {\color{vert}We fix for the rest of subsection the uniform stage number $N_1$ satisfying (\ref{eq:N_1})}. 

\begin{lemme}\label{sum_An}
For every $\tau\in\mathcal{T}$ and every $n\geq \frac{N_1}{\varepsilon^3}$,
\begin{equation*}
\frac{1}{n}\sum_{k=1}^n\PP_{x_1,\overline{\sigma}(x_1),\tau}(A_k)\leq 5\varepsilon.
\end{equation*}
\end{lemme}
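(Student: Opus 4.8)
The plan is to bound the expected number of stages spent in odd phases by decomposing over the (random) switching times and exploiting both the bound on the number of upcrossings (Lemma \ref{upcross}) and the uniformly terminating property of $\sigma^*_\varepsilon$ (Equation (\ref{eq:N_1})). Concretely, an infinite play belongs to $A_k$ precisely when stage $k$ lies in some odd phase, so $\sum_{k=1}^n \1_{A_k}$ counts the number of stages up to $n$ spent in odd phases. The idea is to write this count as $\sum_{p\geq 1}\big(\min(\rho,u_{2p},n+1)-\min(u_{2p-1},n+1)\big)^+\1_{u_{2p-1}<\rho}$, i.e. the total length (truncated at $n$) of the $p$-th odd phase, summed over all upcrossings $p$.

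First I would split the length of each odd phase into a ``short'' part and a ``long'' part relative to the threshold $N_1$. For each $p$, conditionally on $\mathcal{H}_{u_{2p-1}}$ and on $\{u_{2p-1}<\rho\}$, the strategy used is $\sigma^*_\varepsilon(x_{u_{2p-1}})$ starting from a state in $X^0_\varepsilon$, so by (\ref{eq:N_1}) the probability that the phase lasts longer than $N_1$ (i.e. $\rho^\varepsilon>N_1$, which on $\{\rho>u_{2p-1}\}$ equals $\{\min(\rho,u_{2p})-u_{2p-1}>N_1\}$) is at most $\varepsilon^3$. Thus the length of the $p$-th odd phase is bounded by $N_1$ plus a tail term; the tail term is bounded crudely by $n$ but occurs with conditional probability at most $\varepsilon^3$. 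Summing the ``short'' contributions gives at most $N_1$ per upcrossing, hence at most $N_1\cdot\EE[N]$ in expectation, and by Lemma \ref{upcross} this is at most $N_1/(\varepsilon-\varepsilon^2)$. The ``long'' contributions are controlled by $n\cdot\varepsilon^3$ in expectation per phase, and again summing over the (few) upcrossings and using $\EE[N]\leq 1/(\varepsilon-\varepsilon^2)$ keeps this term of order $n\varepsilon^2$.

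Dividing by $n$ and using the hypothesis $n\geq N_1/\varepsilon^3$ turns the ``short'' contribution $N_1/(n(\varepsilon-\varepsilon^2))$ into a quantity of order $\varepsilon^3/(\varepsilon-\varepsilon^2)\leq \varepsilon^2$-ish, while the ``long'' contribution $n\varepsilon^3\EE[N]/n$ is of order $\varepsilon^2$; assembling the constants one obtains the claimed bound $5\varepsilon$. The main obstacle I anticipate is making the conditional tail estimate rigorous while the number of phases is itself random: one must be careful to condition on $\mathcal{H}_{u_{2p-1}}$ (using that $u_{2p-1}$ is a stopping time and that $\sigma^*_\varepsilon$ restarts at the current state $x_{u_{2p-1}}\in X^0_\varepsilon$) before applying (\ref{eq:N_1}), and then sum over $p$ using the tower property together with $\{u_{2p-1}<\rho\}\subseteq\{N\geq p\}$ so that $\sum_p \PP(u_{2p-1}<\rho)=\EE[N]$. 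I would therefore organize the argument as: (i) express $\sum_{k\leq n}\1_{A_k}$ as a sum of truncated odd-phase lengths; (ii) for each $p$ bound the conditional expectation of that length by $N_1+n\,\varepsilon^3$ on $\{u_{2p-1}<\rho\}$; (iii) take expectations, factor out via the tower property, and invoke Lemma \ref{upcross}; (iv) divide by $n$, use $n\geq N_1/\varepsilon^3$, and collect terms to reach $5\varepsilon$.
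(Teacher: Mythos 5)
Your proof is correct and is essentially the argument the paper intends: the paper does not spell out a proof but defers to Lemma 27 of Solan and Vieille (2002), whose argument is exactly your decomposition of $\sum_{k\leq n}\1_{A_k}$ into truncated odd-phase lengths, each bounded conditionally by $N_1+n\varepsilon^3$ via (\ref{eq:N_1}) at the stopping times $u_{2p-1}$, then summed against $\EE[N]\leq 1/(\varepsilon-\varepsilon^2)$ from Lemma \ref{upcross} and divided by $n\geq N_1/\varepsilon^3$. Your constants indeed close (one gets roughly $2\varepsilon^2/(1-\varepsilon)\leq 5\varepsilon$ for $\varepsilon\leq 1/2$, consistent with the restriction used later in Proposition \ref{uniforme}), and your care in conditioning on $\mathcal{H}_{u_{2p-1}}$ before invoking (\ref{eq:N_1}) is precisely the point that makes the argument rigorous.
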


{\color{vert}
The proof for this lemma relies on the upcrossing property established in Lemma \ref{upcross}, and takes the same form as Lemma 27 in Solan and Vieille \cite{Solan_2002}. 
Solan and Veille \cite{Solan_2002} make some \textit{finiteness} assumption (on the set of non-absorbing states on which the target function is not bounded away from zero) in order to obtain the existence of {\color{violet2} $X^1_\varepsilon$} a subset of $X^0_\varepsilon$ and a uniform bound $N_1\geq 1$ such that 
\begin{equation*}
\forall x\in X^1_\varepsilon \text{ and } \tau\in\mathcal{T}: \PP_{x,\sigma^*_\varepsilon(x),\tau}(\rho^\varepsilon>N_1)\leq \varepsilon^3.
\end{equation*}
Under the assumption that $\{v_n, n\geq 1\}$ is totally bounded, we showed in Section \ref{reduction} \big(cf. the condition defined in (\ref{eq:N_1})\big) that we can consider \textcolor{violet2}{$X^1_\varepsilon$} to be the whole set $X^0_\varepsilon$. 
}\\

%

\noindent The following result is a reformulation of the submartingale property in Lemma \ref{submartingale}.
\begin{lemme}\label{control_stage}
For any $m_0\geq 1$, we have
\begin{equation*}
\EE_{x_1,\overline{\sigma},\tau}[v(x_{m_0})]\geq v(x_1)-\varepsilon^2\cdot \EE_{x_1,\overline{\sigma},\tau}[N]-2\PP_{x_1,\overline{\sigma},\tau}(A_{m_0})-\varepsilon.
\end{equation*}
\end{lemme}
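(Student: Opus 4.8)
The plan is to read Lemma \ref{control_stage} as the integrated form of the almost-submartingale property of Proposition \ref{submartingale}, stopped at the deterministic stage $m_0$. First I would note that the state is frozen after absorption, so that $x_{m_0}=x_{\min(\rho,m_0)}$ almost surely and it is enough to bound $\EE_{x_1,\overline{\sigma},\tau}[v(x_{\min(\rho,m_0)})]$. I then introduce the stopped process $Y_l:=v(x_{\min(\rho,m_0,u_l)})$ for $l\geq 0$ (with $u_0=1$), so that $Y_0=v(x_1)$ and, since the switching times increase to $+\infty$ or stabilize once a phase is infinite, $Y_l=v(x_{m_0})$ for $l$ large; thus $v(x_{m_0})-v(x_1)=\sum_{l\geq 0}(Y_{l+1}-Y_l)$ is in fact a pathwise finite sum, and I would take expectations termwise (justified by $|Y_l|\leq 1$).

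The heart of the argument is to bound $\EE_{x_1,\overline{\sigma},\tau}[Y_{l+1}-Y_l\mid\mathcal{H}_{\min(\rho,u_l)}]$ on the event $\{u_l<\min(\rho,m_0)\}$, the increment being null otherwise. When $l$ is even the strategy $s^*$ is played and $(v(x_n))$ is a genuine submartingale (the even case of Proposition \ref{submartingale}, via Corollary \ref{coro:one-shot}); optional stopping at the intermediate time $\min(\rho,m_0,u_{l+1})$ gives $\EE[Y_{l+1}\mid\mathcal{H}_{u_l}]\geq Y_l$ with no error. When $l$ is odd the phase is played with $\sigma^*_\varepsilon$ and Proposition \ref{submartingale} controls only the value at the phase endpoint $\min(\rho,u_{l+1})$, not at an interior stage. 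I would therefore route the estimate through the endpoint: on $\{u_l<\min(\rho,m_0)\}$ one has $Y_{l+1}\geq v(x_{\min(\rho,u_{l+1})})-2\,\mathds{1}_{A_{m_0}}$, because either the phase ends before $m_0$ (so $Y_{l+1}=v(x_{\min(\rho,u_{l+1})})$) or $m_0$ falls strictly inside it, which is the event $A_{m_0}$, where $|v(x_{m_0})-v(x_{\min(\rho,u_{l+1})})|\leq 2$. Combined with Proposition \ref{submartingale} this yields
\[
\EE_{x_1,\overline{\sigma},\tau}[Y_{l+1}\mid\mathcal{H}_{\min(\rho,u_l)}]\geq Y_l-\varepsilon^2-2\,\EE_{x_1,\overline{\sigma},\tau}[\mathds{1}_{A_{m_0}}\mid\mathcal{H}_{\min(\rho,u_l)}]
\]
on $\{u_l<\min(\rho,m_0)\}$ for odd $l$.

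Summing the even increments (all $\geq 0$) and the odd increments, the crude terms telescope: since the portions of $A_{m_0}$ attached to the successive odd phases are disjoint with union $A_{m_0}$, they contribute exactly $-2\PP_{x_1,\overline{\sigma},\tau}(A_{m_0})$. The $\varepsilon^2$ errors occur once per odd phase started before $\min(\rho,m_0)$, hence sum to $\varepsilon^2\sum_{p\geq 1}\PP(u_{2p-1}<\min(\rho,m_0))\leq\varepsilon^2\sum_{p\geq 1}\PP(u_{2p-1}<\rho)\leq\varepsilon^2\,\EE_{x_1,\overline{\sigma},\tau}[N]$, using $\{u_{2p-1}<\rho\}\subseteq\{u_{2p-1}<+\infty\}$ and $N=\sup\{p:\,u_{2p-1}<+\infty\}$ from Lemma \ref{upcross}. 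This gives $\EE[v(x_{m_0})]\geq v(x_1)-\varepsilon^2\,\EE[N]-2\PP(A_{m_0})$, which is the claimed inequality (the extra $-\varepsilon$ being slack).

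The main obstacle is precisely the odd-phase interior: Proposition \ref{submartingale} is a statement about the values at the switching times and gives no information at a deterministic interior stage $m_0$; moreover the event that $m_0$ lies in the current odd phase is not measurable with respect to the start of that phase, so one cannot simply optional-stop at $m_0$. Passing through the endpoint value and paying the crude cost $2\PP(A_{m_0})$ is exactly what circumvents this and produces the $A_{m_0}$-term, while keeping the $\varepsilon^2$ error confined to the odd phases (the even phases being exact submartingales) is what yields $\varepsilon^2\,\EE[N]$ rather than twice that quantity.
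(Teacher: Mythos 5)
Your proposal is correct, but it takes a genuinely different route from the paper: the paper does not prove Lemma \ref{control_stage} internally at all, it simply defers to Proposition 28 of Solan and Vieille \cite{Solan_2002} (where the inequality appears as their Equation (4)). You instead derive the inequality self-containedly from the paper's own ingredients: you telescope the stopped process $Y_l=v(x_{\min(\rho,m_0,u_l)})$ over phases, use the exact submartingale property of $v$ under $s^*$ on even phases (Corollary \ref{coro:one-shot} plus optional stopping, no error term), use Proposition \ref{submartingale} on odd phases while paying a crude cost of $2$ on the per-phase event $\{u_{2p-1}\leq m_0<\min(\rho,u_{2p}),\,u_{2p-1}<\rho\}$, and bound the accumulated $\varepsilon^2$ errors by $\varepsilon^2\sum_{p}\PP(u_{2p-1}<+\infty)=\varepsilon^2\,\EE_{x_1,\overline{\sigma},\tau}[N]$. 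The per-phase bad events are indeed pairwise disjoint; note only that their union can be a \emph{proper} subset of $A_{m_0}$ (when some $u_{2p-1}=m_0$ the increment is zero and that slice of $A_{m_0}$ is never charged), so your phrase ``with union $A_{m_0}$'' is slightly loose, but the inequality goes in the right direction and nothing breaks. What each approach buys: the paper's citation is short but leaves the argument external and tied to Solan--Vieille's setting; your argument makes the paper self-contained, uses exactly the objects already constructed here ($s^*$, $\sigma^*_\varepsilon$, the switching times $u_l$, $N$, $A_{m_0}$), and in fact proves the slightly stronger bound $\EE_{x_1,\overline{\sigma},\tau}[v(x_{m_0})]\geq v(x_1)-\varepsilon^2\,\EE_{x_1,\overline{\sigma},\tau}[N]-2\,\PP_{x_1,\overline{\sigma},\tau}(A_{m_0})$, showing the final $-\varepsilon$ in the statement is pure slack.
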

\begin{proof} For a proof, we refer to Proposition 28 in Solan and Vieille \cite{Solan_2002}, where our lemma is stated as Equation (4) in their proof. \end{proof}\\

Now we use Lemma \ref{upcross}, Lemma \ref{sum_An} and Lemma \ref{control_stage} to prove the following proposition, which concludes the proof of Theorem \ref{theo1}.

\begin{proposition}\label{uniforme}
For any $x_1\in X^0$ and for any $\tau$,
$$\EE_{x_1,\bar{\sigma},\tau}\left[\frac{1}{n}\sum_{m=1}^ng(x_m)\right]\geq v(x_1)-25\varepsilon, \ \forall n\geq \frac{N_1}{\varepsilon^3}.$$
\end{proposition}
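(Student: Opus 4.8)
The plan is to average the pointwise estimate from Lemma~\ref{control_stage} over the first $n$ stages and then bound each of the error contributions using the three lemmas just established. Concretely, I would write
\[
\EE_{x_1,\bar{\sigma},\tau}\left[\frac{1}{n}\sum_{m=1}^n g(x_m)\right]=\EE_{x_1,\bar{\sigma},\tau}\left[\frac{1}{n}\sum_{m=1}^n v(x_m)\right]+\frac{1}{n}\sum_{m=1}^n\EE_{x_1,\bar{\sigma},\tau}\big[g(x_m)-v(x_m)\big].
\]
The first term I would handle by invoking Lemma~\ref{control_stage} at each $m_0=m$, summing over $m=1,\dots,n$ and dividing by $n$. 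This yields a lower bound of the form $v(x_1)-\varepsilon^2\,\EE[N]-\frac{2}{n}\sum_{m=1}^n\PP_{x_1,\bar{\sigma},\tau}(A_m)-\varepsilon$. The second term must be controlled separately, since $g$ and $v$ differ only on the active states (where $g=0$) and I expect this discrepancy to be concentrated exactly on the odd phases and on non-absorbed histories.

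Next I would substitute the three available bounds. By Lemma~\ref{upcross}, $\EE_{x_1,\bar{\sigma},\tau}[N]\leq \frac{1}{\varepsilon-\varepsilon^2}$, so the term $\varepsilon^2\,\EE[N]$ is at most $\frac{\varepsilon^2}{\varepsilon-\varepsilon^2}=\frac{\varepsilon}{1-\varepsilon}\leq 2\varepsilon$ for $\varepsilon$ small. By Lemma~\ref{sum_An}, for $n\geq N_1/\varepsilon^3$ we have $\frac{1}{n}\sum_{m=1}^n\PP_{x_1,\bar{\sigma},\tau}(A_m)\leq 5\varepsilon$, so the averaged upcrossing-penalty term $\frac{2}{n}\sum_m\PP(A_m)$ is at most $10\varepsilon$. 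These two estimates together with the residual $-\varepsilon$ account for the bulk of the $25\varepsilon$ loss. The remaining slack ($25\varepsilon$ minus roughly $13\varepsilon$) is the budget I would spend on the $g$ versus $v$ discrepancy and on the absorbed-before-$\rho$ contributions.

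The main obstacle I anticipate is controlling the discrepancy between $\frac1n\sum_m g(x_m)$ (the actual average payoff) and $\frac1n\sum_m v(x_m)$. On absorbed histories ($m\geq\rho$) we have $g(x_m)=g(x_\rho)$ while $v(x_m)=v(x_\rho)$, and these agree since the absorbing payoff equals the value there; on non-absorbed histories $g(x_m)=0$ while $v(x_m)$ may be positive, so the gap is carried by the non-absorption event. The key point is that after an odd phase the strategy $\sigma^*_\varepsilon$ is uniformly terminating, so non-absorption beyond stage $N_1$ within a phase has probability at most $\varepsilon^3$; combined with the upcrossing bound on the number of phases, the expected fraction of non-absorbed active stages is of order $\varepsilon$, precisely the content packaged into the event $A_m$ and estimated in Lemma~\ref{sum_An}. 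I would therefore identify the stages contributing to the $g$–$v$ gap as a subset of $\bigcup_m A_m$ together with the tail event $\{\rho>n\}$ controlled via (\ref{eq:N_1}) and the upcrossing lemma, and absorb each into the $O(\varepsilon)$ budget.

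Finally I would collect the terms: $v(x_1)$ minus $2\varepsilon$ (upcrossings) minus $10\varepsilon$ (the averaged $A_m$ probabilities) minus the $\varepsilon$ residual from Lemma~\ref{control_stage} minus the $O(\varepsilon)$ contributions from the payoff discrepancy and the non-termination tail, and check that the total loss is bounded by $25\varepsilon$. Since the claimed constant $25$ is generous, the arithmetic should close comfortably for all sufficiently small $\varepsilon$ and all $n\geq N_1/\varepsilon^3$, which is exactly the range asserted; this establishes that $\bar{\sigma}$ uniformly guarantees $v-25\varepsilon$ and hence completes the proof that player~$1$ guarantees $v=\limsup_n v_n$.
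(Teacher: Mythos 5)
Your skeleton matches the paper's proof: the paper likewise averages Lemma~\ref{control_stage} over $m_0=1,\dots,n$ and then feeds in Lemma~\ref{upcross} and Lemma~\ref{sum_An}, with the same $13\varepsilon$ coming from those terms. The genuine gap is in your plan for the discrepancy term $\frac{1}{n}\sum_m\EE_{x_1,\bar{\sigma},\tau}\big[g(x_m)-v(x_m)\big]$. You propose to control it by showing the contributing stages are \emph{rare}: a subset of $\bigcup_m A_m$ together with the tail event $\{\rho>n\}$, asserting that ``the expected fraction of non-absorbed active stages is of order $\varepsilon$''. That assertion is false. The uniform-termination estimate (\ref{eq:N_1}) concerns absorption in the auxiliary game $\Gamma^\varepsilon$, where reaching a state with $v<\varepsilon$ counts as absorption; in the original game $\Gamma$ such a stage merely ends an odd phase and starts an even phase, and the play may then remain non-absorbed forever (if $v$ stays below the threshold, player 1 plays $s^*$ indefinitely and $\rho=+\infty$ can have probability one). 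So $\PP_{x_1,\bar{\sigma},\tau}(\rho>n)$ need not be small, and non-absorbed even-phase stages can occupy a fraction of the play arbitrarily close to $1$; they lie neither in $\bigcup_m A_m$ nor under any tail estimate. Conflating $\rho$ with the auxiliary absorption time $\rho^\varepsilon$ is exactly where the argument breaks.

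What closes the argument --- and this is the paper's key inequality, the very reason the switching times $u_l$ are defined with the thresholds $\varepsilon$ and $2\varepsilon$ --- is that on those stages the \emph{per-stage} gap is small, not their frequency: during an even phase $v(x_m)\leq 2\varepsilon$ by definition of $u_{2r+1}$, hence $g(x_m)=0\geq v(x_m)-2\varepsilon$; on absorbed histories $g(x_m)=v(x_m)$. Together this gives $g(x_m)\geq v(x_m)-2\varepsilon$ whenever $h\notin A_m$, and $g(x_m)\geq -1$ on $A_m$, which is the paper's starting point; it then concludes with exactly the three lemmas you cite and the arithmetic $\frac{\varepsilon^2}{\varepsilon-\varepsilon^2}+3\varepsilon+(4-2\varepsilon)\cdot 5\varepsilon\leq 25\varepsilon$ for $\varepsilon\leq\frac{1}{2}$. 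Your decomposition and constants would close comfortably once you replace the frequency argument on even phases by this per-stage bound; without it, the step bounding the $g$ versus $v$ discrepancy fails.
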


\begin{proof}  Take $x_1\in X^0$ and fix any $\tau$. In this proof $h$ will denote a pure play. We use the fact that $g(x_m)\geq v(x_m)-2 \varepsilon$ if $h\notin A_m$: indeed, either the play has absorbed so $g(x_m)=v(x_m)$, or we have $v(x_m) < 2\varepsilon$ and $g(x_m)=0$. Moreover, if $h\in A_m$, we use $g(x_m) \geq -1$. This gives us:
\begin{equation}\label{eq:gamma_n-0}
\begin{aligned}
\EE_{x_1,\overline{\sigma},\tau}\left[\frac{1}{n}\sum_{m=1}^n g(x_m)\right] 
&\geq \frac{1}{n}\EE_{x_1,\overline{\sigma},\tau}\left[\sum_{m=1}^n\mathds{1}_{h\notin A_m} (v(x_m)-2\varepsilon) + \sum_{m=1}^n \mathds{1}_{h\in A_m} (-1) \right]\\
& \geq \frac{1}{n}\EE_{x_1,\overline{\sigma},\tau}\left[\sum_{m=1}^n v(x_m) \right]  + \frac{1}{n}\EE_{x_1,\overline{\sigma},\tau}\left[\sum_{m=1}^n \mathds{1}_{h\in A_m}\big(-1-(v(x_m)-2\varepsilon)\big) \right]- 2\varepsilon.
\end{aligned}
\end{equation}
\noindent  Lemma \ref{control_stage}  (taking average sum on $m_0=1,...,n$) implies that
\begin{align}\label{eq:gamma_n-1}
\frac{1}{n}\EE_{x_1,\overline{\sigma},\tau}\left[\sum_{m=1}^n v(x_m) \right] & \geq v(x_1) - \varepsilon^2 \cdot \EE_{x_1,\overline{\sigma},\tau}[N] - \frac{2}{n}\sum_{m=1}^n \PP_{x_1,\overline{\sigma},\tau}(A_{m})-\varepsilon.
\end{align}
Moreover, the bound $v(x_m)\leq 1$ gives
\begin{equation}\label{eq:gamma_n-2}
\begin{aligned}
\frac{1}{n}\EE_{x_1,\overline{\sigma},\tau}\left[\sum_{m=1}^n \mathds{1}_{h\in A_m}\big(-1-v(x_m)+2\varepsilon\big) \right]  & \geq \frac{1}{n}\EE_{x_1,\overline{\sigma},\tau}\left[\sum_{m=1}^n \mathds{1}_{h\in A_m}(-2+2\varepsilon) \right] \\
& = (-2+2\varepsilon)\frac{1}{n}\sum_{m=1}^n \PP_{x_1,\overline{\sigma},\tau}(A_{m}).
\end{aligned}
\end{equation}
We substitute (\ref{eq:gamma_n-1}) and (\ref{eq:gamma_n-2}) back into (\ref{eq:gamma_n-0}) to obtain
\begin{align*}
\EE_{x_1,\overline{\sigma},\tau}\left[\frac{1}{n}\sum_{m=1}^n g(x_m)\right] &\geq v(x_1)-\varepsilon^2 \cdot \EE_{x_1,\overline{\sigma},\tau}[N]-3\varepsilon+(-4+2\varepsilon) \cdot \left(\frac{1}{n} \sum_{m=1}^n \PP_{x_1,\overline{\sigma},\tau}(A_{m})\right).
\end{align*}
Finally, we use Lemma \ref{upcross} and Lemma \ref{sum_An} in the equality to have that: $\forall n\geq \frac{N_1}{\varepsilon^3}$ and $\forall \varepsilon\leq \frac{1}{2}$,
\begin{align*}
\EE_{x_1,\overline{\sigma},\tau}\left[\frac{1}{n}\sum_{m=1}^n g(x_m)\right]
&\geq v(x_1)-\frac{\varepsilon^2}{\varepsilon-\varepsilon^2}-3\varepsilon-20\varepsilon\geq v(x_1)-25\varepsilon.
\end{align*}
\noindent note that $N_1$ does not depend on the particular choice of $x_1$ in $X^0$, so the strategy $\overline{\sigma}$ uniformly guarantees $v-25\varepsilon$ in the infinite game $\Gamma$. \end{proof}

\subsection{Pure optimal strategy (proof of Corollary \ref{theo3})}\label{pure_proof}

{\color{violet} To prove the result, it is sufficient to show that both the strategy $s^*$ and the strategy $\sigma^*_\varepsilon$ defined {\color{jaune} in the proof of Theorem \ref{theo1} can be chosen pure and depending only on the history of states}.

By assumption, the $n$-stage game $\Gamma_n(x)$ has a value in pure strategies. It follows that Shapley's equation for any $v_n$ is satisfied with pure strategies, and so is Lemma \ref{oneshotV}. We deduce that \textcolor{orange2}{there exists a pure action $s^*$ that satisfies the conclusion of Corollary \ref{coro:one-shot}}.
 
The construction of the strategy $\sigma^*_\varepsilon$ appeared in the proof of Proposition \ref{proof_uni}, where it was defined as the concatenation of a sequence of strategies $\big(\hat{\sigma}(x_{u_\ell})\big)_{\ell\geq 1}$ at the random stages $(u_\ell)_{\ell\geq 1}$. As each $\hat{\sigma}(x)$ is optimal in the $n(x)$-stage game $\Gamma_{n(x)}(x)$, $\hat{\sigma}(x_{u_\ell})$ can be taken pure. The definition of the random stages $u_\ell$ involved a randomized stopping time $\tilde{k}\in\{1,...,n(x)\}$ satisfying: 
\[
\forall \tau \in \T,\  \widetilde{\EE}_{x,\hat{\sigma},\tau} \left[ g(x_ {\tilde{k}}) \right] \geq \min_{\tau' \in \T}  \EE_{x,\hat{\sigma},\tau'} \Big[\frac{1}{n(x)} \sum_{t=1}^{n(x)} g(x_t) \Big].
\]
To obtain a pure strategy $\sigma^*_\varepsilon$, we show that the random stopping time $\tilde{k}$ can be replaced by a stopping time (pure one), which depends only on the history of states and not on the actions. {\color{rouge} In order to build this stopping time, we restrict ourselves to strategies in $\widehat{\Sigma}$ , $i.e.$, strategies which depend only on past states. Note that each $\hat{\sigma}(x_{u_\ell})$, as an optimal strategy in $\Gamma_{n(x_{u_\ell})}(x_{u_\ell})$, can be taken in $\widehat{\Sigma}$.}
%


\begin{lemme}\label{lem:reduced} \ Fix any $\hat{\sigma}\in\widehat{\Sigma}$ and $x_1$. For any $\tau\in\T$, there exists some $\hat{\tau}\in \widehat{\T}$ such that $\PP_{x_1,\hat{\sigma},\hat{\tau}}(x_1,...,x_t)=\PP_{x_1,\hat{\sigma},\tau}(x_1,...,x_t)$ for any $(x_1,...,x_t)\in X^t, t\geq 1$. 
\end{lemme}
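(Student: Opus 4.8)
The plan is to construct $\hat\tau$ explicitly from $\tau$ by conditioning on the observed sequence of states, and then to verify by induction on $t$ that the two profiles $(\hat\sigma,\tau)$ and $(\hat\sigma,\hat\tau)$ induce the same law on state-sequences. Concretely, for each $t\ge 1$ and each state-history $s_t=(x_1,\dots,x_t)$ that has positive probability under $\PP_{x_1,\hat\sigma,\tau}$, I would let $\hat\tau_t(s_t)\in\Delta(B)$ be the conditional law of player 2's action $b_t$ given that the first $t$ states equal $s_t$; on state-histories of probability zero, $\hat\tau_t$ may be defined arbitrarily (subject to $\hat\tau_t(s_t)(B(x_t))=1$). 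By construction $\hat\tau\in\widehat{\T}$, since it depends on the history only through the states.

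The key observation --- and the technical heart of the argument --- is that when player 1 uses a state-based strategy $\hat\sigma$, conditioning on the sole state-history $s_t$ still decouples the two players' stage actions. Fix $s_t$ with positive probability and let $h_t$ range over the full histories projecting to $s_t$. Given any such $h_t$, player 1 plays $\hat\sigma_t(s_t)$ --- a distribution that does not depend on the action components of $h_t$ --- while player 2 plays $\tau_t(h_t)$, independently. Hence
\[
\PP_{x_1,\hat\sigma,\tau}(a_t=a,b_t=b\mid s_t)=\hat\sigma_t(s_t)(a)\sum_{h_t\to s_t}\PP_{x_1,\hat\sigma,\tau}(h_t\mid s_t)\,\tau_t(h_t)(b)=\hat\sigma_t(s_t)(a)\,\hat\tau_t(s_t)(b),
\]
so the conditional joint law of $(a_t,b_t)$ given $s_t$ factorizes as $\hat\sigma_t(s_t)\otimes\hat\tau_t(s_t)$, exactly the product produced by the profile $(\hat\sigma,\hat\tau)$ at $s_t$.

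With this factorization in hand, I would finish by induction on $t$. The base case $t=1$ is immediate, both measures being the Dirac mass at $x_1$. For the inductive step, assuming $\PP_{x_1,\hat\sigma,\hat\tau}(s_t)=\PP_{x_1,\hat\sigma,\tau}(s_t)$, the one-step transition to $x_{t+1}$ conditioned on $s_t$ is
\[
\sum_{a\in A,\,b\in B}\hat\sigma_t(s_t)(a)\,\hat\tau_t(s_t)(b)\,q(x_t,a,b)(x_{t+1})
\]
under either profile --- under $(\hat\sigma,\tau)$ by the displayed factorization, and under $(\hat\sigma,\hat\tau)$ directly from the definitions. Multiplying by the (equal) probabilities of $s_t$ yields $\PP_{x_1,\hat\sigma,\hat\tau}(s_{t+1})=\PP_{x_1,\hat\sigma,\tau}(s_{t+1})$ for every $s_{t+1}=(s_t,x_{t+1})$, completing the induction.

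The only delicate point is the measure-theoretic justification that player 1's stage action remains distributed according to $\hat\sigma_t(s_t)$ after conditioning merely on the state-history; this is precisely where the hypothesis $\hat\sigma\in\widehat{\Sigma}$ is used, and it is what prevents the same reduction from holding when player 1's strategy genuinely depends on past actions. Everything else is a routine disintegration of $\PP_{x_1,\hat\sigma,\tau}$ along the projection onto state-sequences.
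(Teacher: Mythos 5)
Your proposal is correct and takes essentially the same approach as the paper: the strategy you build, $\hat\tau_t(s_t)=\sum_{h_t\in H_t(s_t)}\PP_{x_1,\hat\sigma,\tau}(h_t\mid s_t)\,\tau_t(h_t)$, is exactly the paper's reduced strategy, and your verification is the same computation. The only difference is presentational: you make explicit the factorization of the conditional law of $(a_t,b_t)$ given the state-history (where $\hat\sigma\in\widehat{\Sigma}$ is used) and the induction on $t$, which the paper compresses into a single ``by definition'' chain of equalities.
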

\begin{proof} For all $t\geq 1$, we denote by $s_t:=(x_1,...,x_t)$ the $t$ first states. For any $\tau\in\T$, define the reduced strategy $\hat{\tau}\in\widehat{\T}$ as:
$$\hat{\tau}_t(s_t)=\sum_{h_t\in H_t(s_t)}\PP_{x_1,\hat{\sigma},\tau}(h_t|s_t)\tau_t(h_t), \ \forall s_t, \ \forall t\geq 1. $$ 
where $H_t(s_t)$ denotes the histories in $H_t$ containing $s_t$. Then we obtain by definition:
$$\PP_{x_1,\hat{\sigma},\hat{\tau}}(s_{t+1})=\sum_{h_t\in H_t(s_t)}\PP_{x_1,\hat{\sigma},\tau}(h_t|s_t)\PP_{x_1,\hat{\sigma},\tau}(s_{t+1}|h_t)=\PP_{x_1,\hat{\sigma},\tau}(s_{t+1}).$$ 
\end{proof}
}

{\color{violet} \begin{lemme}\label{optstop}  Fix any $x_1\in X^0$ and $\sigma\in\widehat{\Sigma}$. For any $n\geq 1$, there exists a stopping time $\theta:\bigcup_{1\leq t\leq n}X^t\to \{1,...,n\}$ such that for every strategy $\tau$ of player 2:

\begin{equation*}
\EE_{x_1,\sigma,\tau}\left[g(x_\theta) \right]\geq \min_{\tau'}\EE_{x_1,\sigma,\tau'}\left[\frac{1}{n}\sum_{t=1}^n g(x_t)\right].
\end{equation*}
\end{lemme}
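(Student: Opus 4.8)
The plan is to derandomize the uniformly chosen stopping time $\tilde k$ from the proof of Proposition~\ref{posstage} by a finite-horizon dynamic-programming argument on the (finite) tree of state-histories of length at most $n$, and then to show that allowing player~1 to randomize the stopping decision brings no advantage. First I would reduce the minimization to $\widehat{\T}$: since $\sigma\in\widehat{\Sigma}$ and both $g(x_\theta)$ and $\tfrac1n\sum_{t=1}^n g(x_t)$ depend on the play only through the sequence of states, Lemma~\ref{lem:reduced} provides, for every $\tau\in\T$, some $\hat\tau\in\widehat{\T}$ with the same law on state-histories, so $\EE_{x_1,\sigma,\tau}[g(x_\theta)]=\EE_{x_1,\sigma,\hat\tau}[g(x_\theta)]$ and likewise for the average. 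Hence it suffices to produce a state-adapted pure $\theta$ with $\EE_{x_1,\sigma,\tau}[g(x_\theta)]\ge V$ for all $\tau\in\widehat{\T}$, where $V:=\min_{\tau'\in\widehat{\T}}\EE_{x_1,\sigma,\tau'}[\tfrac1n\sum_{t=1}^n g(x_t)]$ equals the right-hand side of the lemma (again by Lemma~\ref{lem:reduced}). Within $\widehat{\T}$ the state process is a history-indexed Markov chain controlled by player~2, which makes the backward induction clean.

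Next I would define, by downward induction on $t$ from $n$ to $1$, a value $W_t(s_t)$ on histories $s_t=(x_1,\dots,x_t)$ by $W_n(s_n)=g(x_n)$ and, for $t<n$,
$$W_t(s_t)=\max\Big(g(x_t),\ \min_{b\in B(x_t)}\textstyle\sum_{a}\sigma_t(s_t)(a)\sum_{x'}q(x_t,a,b)(x')\,W_{t+1}(s_t,x')\Big),$$
where the minimum over $\Delta(B(x_t))$ is attained at a pure $b$ by linearity. The associated pure rule $\theta^{*}$ --- stop at the first $t$ at which $g(x_t)$ is at least its continuation value (and at $n$ at the latest) --- is an adapted stopping time depending only on states, and standard finite-horizon dynamic programming yields $\min_{\tau\in\widehat{\T}}\EE_{x_1,\sigma,\tau}[g(x_{\theta^{*}})]=W_1(x_1)=\max_{\theta\ \mathrm{pure}}\min_{\tau}\EE_{x_1,\sigma,\tau}[g(x_\theta)]$.

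It then remains to show $W_1(x_1)\ge V$, which is the key and only non-routine step. I would prove that the same recursion computes the value when player~1 is allowed to randomize the stopping decision: writing $\widetilde W_t$ for the randomized-stopping value, at each node the optimization takes the form $\max_{p\in[0,1]}\big(p\,g(x_t)+(1-p)\min_{b}\Phi(b)\big)$, where $\Phi(b)$ is the continuation term and the constant payoff $g(x_t)$ pulls out of the minimization over $b$. Being linear in $p$, this maximum is attained at $p\in\{0,1\}$, so $\widetilde W_t=W_t$ by induction; randomization is useless for the stopper. In particular $W_1(x_1)=\widetilde W_1(x_1)\ge\min_\tau\widetilde\EE_{x_1,\sigma,\tau}[g(x_{\tilde k})]$ for the uniform rule $\tilde k$ on $\{1,\dots,n\}$, since $\tilde k$ is one admissible randomized stopping rule.

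Finally, as in the proof of Proposition~\ref{posstage}, $\widetilde\EE_{x_1,\sigma,\tau}[g(x_{\tilde k})]=\tfrac1n\sum_{t=1}^n\EE_{x_1,\sigma,\tau}[g(x_t)]$, so $\min_\tau\widetilde\EE_{x_1,\sigma,\tau}[g(x_{\tilde k})]=V$ and therefore $W_1(x_1)\ge V$. Taking $\theta:=\theta^{*}$ then gives $\min_\tau\EE_{x_1,\sigma,\tau}[g(x_\theta)]=W_1(x_1)\ge V$ for all $\tau\in\widehat{\T}$, hence for all $\tau\in\T$ by the first reduction, which is the desired conclusion. I expect the main obstacle to be precisely this pure-versus-randomized comparison, resting on the fact that the maximum over the stopping probability of a function affine in $p$ is attained at an endpoint; the remaining ingredients are routine finite-horizon dynamic programming and the already-established Lemma~\ref{lem:reduced}.
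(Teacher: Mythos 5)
Your proof is correct, but it takes a genuinely different route from the paper's. The paper proceeds by induction on the horizon $n$: after the same reduction to $\widehat{\T}$ via Lemma~\ref{lem:reduced}, it invokes the inductive hypothesis from stage $2$ onward to get a stopping time $\theta^+$, stops at stage $1$ exactly when the continuation value $w_{n-1}(\sigma,x_1)$ is nonpositive, and concludes from $\max\{0,w_{n-1}\}\geq\frac{n-1}{n}w_{n-1}$ together with $g(x_1)=0$, which makes the $n$-stage average equal to $\frac{n-1}{n}$ times the average from stage $2$. You instead define the optimal-stopping value $W_t$ by backward induction, identify it with the pure maxmin, and reduce the lemma to the claim that adapted randomization is useless for the stopper, so that the uniform rule $\tilde k$ --- whose guarantee is exactly the right-hand side --- is dominated by a pure $\theta^*$. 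Each approach buys something: the paper's induction is shorter and self-contained, while yours isolates the real content (pure adaptive stopping dominates randomized stopping against an adversary) and makes transparent why no recursive structure is needed, matching the paper's remark that the lemma extends to stochastic games with state-dependent payoffs. One caveat: the recursion you assert for the randomized value $\widetilde W_t$ is precisely the crux and cannot be taken for granted, since in general a mixture of pure strategies can guarantee strictly more than each component (the adversary must commit against the mixture). It does hold here, either because continuation rules enter the maxmin through separate terms for distinct successor histories and so can be optimized pointwise before exchanging with $\min_b$, or, more directly, because player 2's Markov strategy playing at each $s_t$ a minimizer of $\sum_{a}\sigma_t(s_t)(a)\sum_{x'}q(x_t,a,b)(x')\,W_{t+1}(s_t,x')$ turns $\sum_{u<t}\bigl(\prod_{v<u}(1-p_v)\bigr)p_u\,g(x_u)+\bigl(\prod_{u<t}(1-p_u)\bigr)W_t(s_t)$ into a supermartingale, capping every randomized rule at $W_1(x_1)$. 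With that step spelled out, your argument is complete.
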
 
\begin{proof} By Lemma \ref{lem:reduced}, we can assume that $\tau\in\widehat{\T}$. Let us prove the result by induction. {\color{jaune} For every $x_1\in X^0$, the result is true for $n=1$. Suppose that the claim is true for $n-1$. Let $x_1 \in X^0$. By applying the inductive assumption to the different states possible at stage $2$, we obtain that there is some stopping time  $\theta^+:\bigcup_{t=1}^{n-1} X^t\to \{2,...,n\}$ such that}
\begin{align}\label{eq:theta+}
\EE_{x_1, \sigma,\tau}\big[g(x_{\theta^+})|x_2\big]\geq \min_{\tau'}\EE_{x_1,\sigma,\tau'}\Big[\frac{1}{n-1}\sum_{t=2}^{n}g(x_t)\big|x_2\Big]:=w_{n-1}(\sigma,x_1,x_2).
\end{align}
Denote $w_{n-1}(\sigma,x_1)=\inf\limits_{y\in\De(J)} \EE_{x_1,\sigma,y}\big[w_{n-1}(\sigma,x_1,x_2)\big]$. We define the stopping time $\theta:\bigcup_{t=1}^{n} X^t\to \{1,...,n\}$ by
\[
\forall (x_1,..,x_t)\in X^t,\ \ \theta(x_1,...,x_t)=\begin{cases} 1 & \text{ if }0\geq w_{n-1}(\sigma,x_1), \\
\theta^+(x_2,...,x_t) & \text{ otherwise} .
\end{cases}
\]

%

\noindent According to the definition of $\theta$ and the inductive assumption (\ref{eq:theta+}) for $\theta^+$:
\begin{align*}
\EE_{x_1,\sigma,\tau}[g(x_\theta)]&= g(x_1)\mathds{1}_{0\geq w_{n-1}(\sigma,x_1)}+ \EE_{x_1,\sigma,\tau}\Big[\EE_{x_1,\sigma,\tau}\big[g(x_{\theta^+})|x_2\big]\Big]\mathds{1}_{0< w_{n-1}(\sigma,x_1)}\\
& \geq \max\big\{0, w_{n-1}(\sigma,x_1)\big\}\\ 
& \geq \frac{n-1}{n}w_{n-1}(\sigma,x_1). 
\end{align*}
Finally $g(x_1)=0$, therefore
$$\frac{n-1}{n}w_{n-1}(\sigma,x_1)= \left(\frac{n-1}{n}\right)\inf_{\tau'}\EE_{x_1,\sigma,\tau'}\left[\frac{1}{n-1}\sum_{t=2}^{n-1}g(x_t)\right]=\min_{\tau'}\EE_{x_1,\sigma,\tau'}\left[\frac{1}{n}\sum_{t=1}^n g(x_t)\right]. $$
This concludes the inductive proof. \end{proof} 

\begin{remarque} Let $\Gamma$ be a {\color{rouge} stochastic game} where the payoff function depends only on the state but not the actions, the proof for the above result follows the same way.   
\end{remarque}
}

\section{Application to {\color{orange2} {\color{rouge} recursive games with signals}}} \label{sec:5}

In this last section, we apply our result to the model of finite {\color{orange2} {\color{rouge} recursive games with signals}} where one player is more informed than the other player. Introducing an auxiliary stochastic game similar to the one defined in Gensbittel \textit{et al}. \cite{Gensbittel_2014}, we show that the study of such a recursive game can be reduced to the study of recursive game with a countable state space satisfying the assumption of Corollary \ref{theo3}.



\subsection{Model}

{\color{jaune} The following model of general repeated game is introduced in  Mertens {\it et al.} \cite{Mertens_2015}}. A repeated game $\Gamma=(K,I,J,C,D,g,q)$ is given by
\begin{itemize}
\item a finite state space: $K$.
\item two finite action spaces $I$ and $J$.
\item two finite signal spaces $C$ and $D$.
\item a payoff function: $g:K \times I \times J\to[-1,+1]$.
\item a transition probability function (on states and signals): $q$ from $ K \times I \times J$ to $\Delta(K\times C\times D)$.
\end{itemize}

\noindent Denote by $\Gamma(\pi)$ the game {\color{vert}with} an initial probability distribution $\pi \in\De( K\times C \times D)$, {\color{vert}which is played as follows. Initially, the triple $(k_1,c_1,d_1)$ is drawn according to $\pi$. At stage 1: player 1 learns $c_1$ and player 2 learns $d_1$. Then simultaneously player 1 chooses an action $i_1\in I$ and player 2 chooses an action $j_1\in J$. The stage payoff is $g(k_1,i_1,j_1)$, and the new triple $(k_2,c_2,d_2)$ is drawn according to $q(k_1,i_1,j_1)$. The game then proceeds to stage 2: player 1 observes $c_2$, and player 2 observes $d_2$ etc...}    \\

We assume that each player's signal contains his own action. Formally, there exists $\hat{\imath}:C \rightarrow I$ and $\hat{\jmath}:D \rightarrow J$ such that
\[
\forall k \in K,\ \sum_{k',c,d} q\big(k,\hat{\imath}(c),\hat{\jmath}(d)\big)(k',c,d)=1.
\]

{\color{vert}We will focus on repeated games with the following two features: \textit{recursive} and \textit{one player is more informed than the other}.}

\begin{definition}
The repeated game $\Gamma$ is \emph{recursive} if there exist $K^0$ and $K^*$, a partition of $K$ such that:
\begin{itemize}
\item the stage payoff is $0$ on active states: $\forall(k,i,j) \in K^0\times I \times J$, $g(k,i,j)=0$.
\item states in $K^*$ are absorbing: $\forall k\in K^*$, $\sum_{c\in C,d\in D}q(k,i,j)(k,c,d)=1$ for all $(i,j)\in I \times J$ and $g(k,i,j)$ depends only on $k$.
\end{itemize}
{\color{rouge} In the rest of the paper, a recursive repeated game will be called a \textit{recursive games with signals.}}
\end{definition}

\begin{definition}
Player 1 is \emph{more informed} than player 2 in the recursive game $\Gamma$ if there exists a mapping $\hat{d}:C \rightarrow D$ such that, if $E$ denotes $\{(k,c,d)\in K \times C \times D,\ \hat{d}(c)=d \}$, then
\[q(k,i,j)(E)=1, \ \forall (k,i,j)\in K\times I \times J. \]
\end{definition}
 {\color{vert} \begin{notation} We denote by: $\Delta^1(K\times C \times D)=\{\pi| \pi(E)=1\}$. \end{notation} }
 
We define similarly that player $2$ is more informed than player $1$.  Whenever player $1$ is more informed than player $2$ and player $2$ is more informed than player $1$, {\color{vert} $\Gamma$ is a repeated game with \emph{symmetric signals}. We denote by $\Delta^*(K\times C\times D)$ the set of symmetric initial distributions.} \\

\begin{remarque}  \ By assumption, if player 1 is more informed than player $2$, he learns especially the action played by player $2$ since it is included in the signal of player $2$. Player 2 is in general not informed of the action played by player 1.

In Gensbittel \textit{et al}. \cite{Gensbittel_2014}, the authors considered a weaker notion of "a more informed player" but they made a different assumption on the transition function, especially that the less informed player has no influence on the evolution of beliefs of both players. It is not clear if our result still holds under this weaker assumption.
\end{remarque}
%
%
%
%
\subsection{Evaluation}

At stage $t$, the space of past histories of player $1$ is
$H^1_{t}=(C \times I )^{t-1}\times C$ and the space of past histories of player $2$ is
$H^2_{t}=(D \times J )^{t-1}\times D.$ Let
$H_{\infty}=(K \times C\times D \times I \times J)^{\infty}$ be the space of infinite
\emph{plays}. For any play $h=(k_s,c_s,d_s,i_s,j_s)_{s\geq 1}$, we denote by $h_t$ its projection on $H_t$, by $h_t^1$ its projection on $H_t^1$, and by $h_t^2$ its projection on $H_t^2$.



A \emph{(behavior) strategy} for player $1$ is a sequence $(\sigma_t)_{t
\geq 1}$ of functions $\sigma_t: H^1_t \rightarrow \Delta(I)$. A
\emph{(behavior) strategy} for player $2$ is a sequence $\tau=(\tau_t)_{t
\geq 1}$ of functions $\tau_t: H^2_t \rightarrow \Delta(J)$. We denote
by $\Sigma$ and $\T$ players' respective sets of strategies. An initial distribution $\pi\in\De(K\times C\times D)$ and a couple of strategies $(\sigma,\tau)$ define a probability distribution over the set of infinite plays, which we denote by $\PP^\pi_{\sigma,\tau}$. Let $\EE^\pi_{\sigma,\tau}$ be the expectation w.r.t. to $\PP^\pi_{\sigma,\tau}$. 

{\color{vert}For any given $\pi\in\De(K\times C\times D)$, let $\gamma_n(\pi,\sigma,\tau)$ \big(resp. $\gamma_\lambda(\pi,\sigma,\tau)$\big) be the expected $n$-stage payoff \big(resp. $\lambda$-discounted payoff\big) associated with $(\sigma,\tau)\in\Sigma\times \T$. We denoted by $v_n(\pi)$ the $n$-stage value and by $v_\lambda(\pi)$ the $\lambda$-discounted value.} 

\begin{definition}\label{asymptotic} Given an initial distribution $\pi \in\Delta(K\times C\times D)$,  the game $\Gamma(\pi)$ has an asymptotic value $v(\pi)$ if:
\[
v(\pi)=\lim_{n\to\infty} v_n(\pi)=\lim_{\lambda\to 0} v_\lambda(\pi).
\]
\end{definition} 

\begin{definition}\label{sign_maxmin}
Given an initial distribution $\pi \in\Delta(K\times C\times D)$, the game $\Gamma(\pi)$ has a \emph{uniform maxmin} $\underline{v}_\infty(\pi)$ if:
\begin{itemize}
\item Player $1$ can \emph{guarantee} $\underline{v}_\infty(\pi)$, $i.e.$, for all $\varepsilon >0$ there exists a strategy $\sigma^*\in \Sigma$ of player $1$ and $n_0 \geq 1$ such that
\begin{align*}
\forall n\geq n_0, \ \forall \tau\in \T,\ \gamma_n(\pi,\sigma^*,\tau) \geq \underline{v}_\infty(\pi)-\varepsilon.
\end{align*}
\item Player $2$ can \emph{defend} $\underline{v}_\infty(\pi)$, $i.e.$, for all $\varepsilon >0$ and for every strategy $\sigma \in \Sigma$ of player $1$, there exists $n_0\geq 1$ and $\tau^*\in \T$ such that
\begin{align*}
 \forall n\geq n_0,\ \gamma_n(\pi,\sigma,\tau^*) \leq  \underline{v}_\infty(\pi) +\varepsilon.
\end{align*}
\end{itemize}
\end{definition}

The game $\Gamma(\pi)$ has a \emph{uniform minmax} $\overline{v}_\infty(\pi)$ is defined similarly if  player $2$ can guarantee $\overline{v}_\infty(\pi)$ and player $1$ can defend $\overline{v}_\infty(\pi)$.

\begin{definition}\label{def:uniform} Given an initial distribution $\pi \in\Delta(K\times C\times D)$, we say that $\Gamma(\pi)$ has a uniform value if both $\bar{v}_\infty(\pi)$ and $\underline{v}_\infty(\pi)$ exist and are equal. Whenever the uniform value exists, we denote it by $v_\infty(\pi)$.
\end{definition}

\subsection{Results}

\begin{theoreme}\label{thm:maxmin}
Let $\Gamma$ be a recursive game such that player $1$ is more informed than player $2$. Then for every distribution $\pi \in \Delta^1(K \times C \times D)$, {\color{vert}both the asymptotic value and the uniform $maxmin$ exist and are equal}:
\[\underline{v}_\infty(\pi) =\lim v_n(\pi)=\lim v_\lambda(\pi)\]
\end{theoreme}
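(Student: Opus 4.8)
The plan is to reduce $\Gamma(\pi)$ to an auxiliary recursive game with a countable state space and then to invoke Corollary \ref{theo3}. Because player $1$ is more informed than player $2$, I would take as the auxiliary state the conditional law $p_t$ of the pair ``current state, player $1$'s signal'' given player $2$'s private history $h^2_t$, exactly as in the construction of Gensbittel \textit{et al}. \cite{Gensbittel_2014}. This belief is updated by Bayes' rule and serves as the state of an auxiliary stochastic game $\widetilde{\Gamma}$ with full monitoring: player $1$ can follow $p_t$ since he observes everything player $2$ observes, and player $2$ can follow it since its update only uses his own signal together with the publicly chosen split of player $1$. As $K,I,J,C,D$ are finite and the transitions are finitely supported, only countably many beliefs are reachable from a fixed $\pi\in\Delta^1(K\times C\times D)$, so $\widetilde{\Gamma}$ has a countable state space; moreover the recursive structure transfers, a belief concentrated on $K^*$ being absorbing with a fixed payoff and the others carrying payoff $0$. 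I would then check that the $n$-stage and $\lambda$-discounted values of $\widetilde{\Gamma}$ at the initial belief $p(\pi)$ coincide with $v_n(\pi)$ and $v_\lambda(\pi)$.

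The next step is to establish total boundedness of $\{v_n,n\geq 1\}$. The key input, which I would borrow from Gensbittel \textit{et al}. \cite{Gensbittel_2014}, is that the maps $\pi\mapsto v_n(\pi)$ are equicontinuous on the (compact, finite-dimensional) belief simplex, \emph{uniformly} in $n$. Together with the uniform bound $\|v_n\|_\infty\leq 1$, the Arzel\`a--Ascoli theorem shows that $\{v_n\}$ is relatively compact, hence totally bounded, for the uniform norm; restricting to the countable set of reachable beliefs preserves total boundedness. I would also record that the finite-horizon auxiliary games admit pure optimal strategies -- by passing to the splitting formulation of the informed player's moves, where linearity of the payoff in the split allows one to restrict to extreme points -- so that the full hypotheses of Corollary \ref{theo3} are met.

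Corollary \ref{theo3} then yields a uniform value $\widetilde{v}_\infty$ for $\widetilde{\Gamma}$, guaranteed by both players with strategies depending only on the history of belief-states; in particular $v_n(\pi)\to\widetilde{v}_\infty(p(\pi))$, and by the Tauberian Corollary \ref{coro:UCV_lambda} also $v_\lambda(\pi)\to\widetilde{v}_\infty(p(\pi))$, which gives the asymptotic value. For the uniform $maxmin$ I would translate strategies back to $\Gamma(\pi)$. Player $1$, being more informed, computes the current belief from his own observations and his own (self-chosen) past play, hence implements the auxiliary optimal strategy and \emph{guarantees} $\lim v_n(\pi)-\varepsilon$. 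For the \emph{defend} side, once player $1$ fixes a strategy $\sigma$, the belief process becomes an $h^2$-adapted Markov chain that player $2$ can reconstruct, so player $2$ plays the auxiliary defending strategy against the induced play and holds the payoff to at most $\lim v_n(\pi)+\varepsilon$.

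The hard part will be twofold. First, making the belief dynamics a bona fide recursive stochastic game with matching values: one must verify that player $2$'s Bayesian update is well defined as a public transition and that the active/absorbing dichotomy of $\Gamma$ passes to the beliefs. Second, and most importantly, the asymmetry in the back-translation is unavoidable: tracking the belief requires knowledge of player $1$'s strategy, which player $1$ always has but player $2$ acquires only after $\sigma$ is fixed. This is precisely why the argument delivers the uniform $maxmin$ (a guarantee for player $1$, a defence for player $2$) rather than the full uniform value, and why the equicontinuity-in-$\pi$ estimate, uniform in $n$, is the quantitative heart that makes Theorem \ref{theo1} applicable.
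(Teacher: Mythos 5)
You have reproduced the paper's overall architecture (auxiliary recursive game on beliefs with countably many reachable states, total boundedness via Gensbittel \textit{et al}., Corollary \ref{theo3} plus the Tauberian Corollary \ref{coro:UCV_lambda}, asymmetric back-translation yielding a maxmin rather than a value), but there is a genuine gap on the \emph{defend} side, and it is exactly the point where the paper needs a second, different auxiliary game. Your defense step says: once $\sigma$ is fixed, the belief process is an $h^2$-adapted chain, so ``player 2 plays the auxiliary defending strategy against the induced play.'' The defense property of that auxiliary strategy, however, only quantifies over plays generated by strategies \emph{of the auxiliary game}, in which player 1's stage action is a map from his current first-order belief to $\Delta(I)$. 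An arbitrary $\sigma\in\Sigma$ depends on the full private history $h^1_t$: two histories with the same belief $p_t$ may receive different actions, so the induced belief transition is $\ell\bigl(x_t,\bar{a}_{h^2_t},j_t\bigr)$, where $\bar{a}_{h^2_t}$ is an average of $\sigma$ over $h^1_t$ that depends on $h^2_t$, i.e.\ on strictly more than the auxiliary history $(x_1,\dots,x_t)$. To represent this as an auxiliary strategy of player 1 you would have to average these actions over the $h^2_t$ compatible with $(x_1,\dots,x_t)$, which requires $\ell$ to be affine in player 1's action. It is not: conditioning on player 1's signal, which is strictly finer than player 2's, is a nonlinear Bayes operation. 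This is precisely the asymmetry of the model: $\ell$ \emph{is} affine in player 2's action (Lemma \ref{lem:ell-b}), because player 1 observes player 2's signals, and that affineness is what makes the \emph{guarantee} side work (together with Lemma \ref{lem:p1-calculate-belief}); no symmetric statement holds for player 1's action, so the induced play may be strictly better for player 1 than anything available inside the belief game, and the auxiliary defense need not hold the payoff down.

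The paper fills this hole by introducing a second auxiliary recursive game $\mathcal{R}$ (Proposition \ref{aux_pass}), whose states are distributions over full finite histories of $\Gamma$. There, every $\sigma\in\Sigma$ literally \emph{is} a pure, history-independent strategy (a sequence of actions $H^1_n\to\Delta(I)$), so no averaging of player 1's behavior is needed, and player 2, who may know $\sigma$ when defending, can track the $\mathcal{R}$-state along the essentially unique non-absorbed play. Total boundedness for $\mathcal{R}$ is then not obtained from any compactness of its huge state space, but from the identity $\widetilde{v}_n(\pi)=v_n(\pi)=\hat{v}_n\bigl(\Phi(\pi)\bigr)$, i.e.\ from the Gensbittel \textit{et al}.\ reduction once more. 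This also corrects a detail in your sketch: the relevant belief space is not a ``compact, finite-dimensional simplex'' but $B_2=\Delta_f\bigl(\Delta(K)\bigr)$ with the Wasserstein metric, which is totally bounded yet infinite-dimensional, the values being $1$-Lipschitz there (Proposition \ref{canlip}); and Corollary \ref{theo3} must be invoked in the form of Remark \ref{theo3_com}, since the auxiliary action sets are infinite. So: same skeleton as the paper, but the single-auxiliary-game shortcut for player 2's defense does not work, and repairing it requires the second game $\mathcal{R}$, not merely a reconstruction argument.
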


\noindent By symmetry, we deduce a similar result by exchanging the roles of player $1$ and player $2$. When the information is symmetric, both results are true and we obtain the existence of the uniform value.

\begin{corollary}
Let $\Gamma$ be a  recursive game with symmetric signals. Then for every $\pi \in \Delta^*(K \times C\times D)$, the game $\Gamma(\pi)$ has a uniform value.
\end{corollary}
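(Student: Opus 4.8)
The plan is to derive the corollary as an immediate consequence of Theorem~\ref{thm:maxmin} together with its symmetric counterpart, both applied to the same initial distribution $\pi$. The key observation is that for a recursive game with symmetric signals the two informational hypotheses hold simultaneously: player $1$ is more informed than player $2$ \emph{and} player $2$ is more informed than player $1$. I would therefore invoke the theorem once from player $1$'s side and once from player $2$'s side; the first application pins down the uniform $maxmin$ and the second the uniform $minmax$, and since both are expressed through the same intrinsic quantity $\lim v_n(\pi)=\lim v_\lambda(\pi)$ they must coincide, which is exactly what Definition~\ref{def:uniform} requires for the existence of the uniform value.

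Concretely, I would first fix $\pi\in\Delta^*(K\times C\times D)$ and record that, by the definition of symmetric signals, $\pi$ is an admissible initial distribution for each of the two informational structures (i.e.\ $\pi$ lies both in the set $\Delta^1$ associated with ``player $1$ more informed'' and in its analogue for ``player $2$ more informed''). Applying Theorem~\ref{thm:maxmin} directly yields that the uniform $maxmin$ exists with $\underline{v}_\infty(\pi)=\lim v_n(\pi)=\lim v_\lambda(\pi)$. Then I would apply the symmetric version of the theorem — obtained by exchanging the roles of the two players, as stated in the remark following Theorem~\ref{thm:maxmin} — to the hypothesis that player $2$ is more informed than player $1$; this gives the existence of the uniform $minmax$ with $\overline{v}_\infty(\pi)=\lim v_n(\pi)=\lim v_\lambda(\pi)$. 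Since the $n$-stage value $v_n(\pi)$ is an intrinsic object of $\Gamma(\pi)$, not depending on which informational structure one exploits, both applications refer to the same convergent sequence $(v_n(\pi))_n$, so the two limits agree. Hence $\underline{v}_\infty(\pi)=\overline{v}_\infty(\pi)$, and by Definition~\ref{def:uniform} the game $\Gamma(\pi)$ has a uniform value equal to $\lim v_n(\pi)=\lim v_\lambda(\pi)$.

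The argument involves essentially no genuine difficulty beyond bookkeeping; the one point deserving care is to verify that the symmetric application really produces a statement about $\overline{v}_\infty$ in the original (unswapped) game. After relabelling so that the old player $2$ becomes the maximizer of the payoff $-g$, the value of the swapped game is $-v_n(\pi)$, and its uniform $maxmin$ translates back into the assertion that player $2$ can guarantee $\lim v_n(\pi)$ while player $1$ can defend it in $\Gamma(\pi)$ — which is exactly the uniform $minmax$ in the sense of Definition~\ref{sign_maxmin} and the minmax definition that follows it. Once this identification of ``$maxmin$ of the swapped game'' with ``$minmax$ of $\Gamma(\pi)$'' is made and the two limits are seen to coincide, the conclusion follows with no additional computation.
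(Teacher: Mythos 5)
Your proposal is correct and follows essentially the same route as the paper, which deduces the corollary in one line by applying Theorem~\ref{thm:maxmin} to player $1$'s side (giving $\underline{v}_\infty(\pi)=\lim v_n(\pi)=\lim v_\lambda(\pi)$) and, by exchanging the roles of the players, to player $2$'s side (giving $\overline{v}_\infty(\pi)=\lim v_n(\pi)=\lim v_\lambda(\pi)$), whence the two coincide and Definition~\ref{def:uniform} applies. Your extra verification that the maxmin of the role-swapped game translates into the minmax of $\Gamma(\pi)$ is a sound piece of bookkeeping that the paper leaves implicit.
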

It is known from Ziliotto \cite{Ziliotto_2013} that stochastic games with symmetric signals may have no uniform value. Therefore recursive games have very particular properties. It is a challenging task to identify the subclass of {\color{vert} repeated games} with $\underline{v}_\infty(\pi)=\lim v_n(\pi)=\lim v_\lambda(\pi)$. 

{\color{rouge}
\begin{remarque} \  Note that we have assumed that the stage payoff on absorbing states does not depend on the actions played. Under this assumption, players' strategies have only an influence on non-absorbing plays. Therefore, without loss of generality, we assume in the following that players observe whenever an absorption occurs and in which state it is. 

If we consider that the payoff in absorbing states still depends on the actions played, then our proof does not work. Indeed the auxiliary game introduced in {\color{violet2} Proposition \ref{prop:reduced} is not recursive anymore}. The result $\underline{v}_\infty(\pi)=\lim v_n(\pi)=\lim v_\lambda(\pi)$ is unknown for this general case.
\end{remarque}} 

%
%
%
{\color{jaune}
\begin{remarque}   It is not known whether recursive games with any structure of signals have a uniform value. As highlighted in Rosenberg and Vieille \cite{Rosenberg_2000}, the equicontinuity of the $\lambda$-discounted value functions is sufficient in order to deduce the existence of the uniform value  for recursive games (with perfect observations). For a recursive game with any structure of signals, one can introduce the game associated with a universal belief space but we do not know a metric on this space such that the $\lambda$-discounted values or the $n$-stage values are equicontinuous/totally bounded.
\end{remarque}
}

\subsection{Proof of Theorem \ref{thm:maxmin}}

{\color{vert}

{\color{vert}We introduce some notations concerning different belief hierarchies}. Denote by $B_1=\Delta(K)$ the set of beliefs of player $1$ on the state variable. Denote by $B_2=\De_f(B_1)=\De_f(\Delta(K))$ the set of beliefs of player $2$ on the {\color{vert}(first-order)} beliefs of player $1$. Finally, we denote by $\De_f(B_2)=\De_f(\De_f(\Delta(K)))$ the set of probability distibutions over the second-order beliefs of player $2$.\\

\noindent \textbf{Overview of the proof} \ 

{\color{jaune} We fix $\Gamma$ a recursive game with signals such that player $1$ is more informed than player $2$}. The first subsection presents general properties for repeated games with one player more informed than the other. Given any $\pi \in\Delta^1(K\times C\times D)$,  we can define   {\color{jaune} the distribution} of the beliefs of player $2$ on the beliefs of player $1$ about the {\color{rouge}{state}}.  This defines a function $\Phi$ from $\Delta^1(K\times C\times D)$ to $\De_f(B_2)$.  Applying results in Gensbittel \textit{et al}. \cite{Gensbittel_2014}, we know that  $v_n(\pi)$ depends on $\pi$ only through $\Phi(\pi)$. This enables us to show that the value function $v_n$, defined on $\Delta^1(K\times C\times D)$, induces a canonical function $\hat{v}_n$ defined on $B_2$ such that $v_n(\pi)=\hat{v}_n\big(\Phi(\pi)\big)$ and the family $\{\hat{v}_n, n\geq 1\}$ is totally bounded.

In the second subsection,  we introduce an auxiliary recursive game $\mathcal{G}$ which is defined on $B_2$ and is played with pure actions. We prove  in Proposition \ref{equal} that the $n$-stage value of $\mathcal{G}_n$ is equal to $\hat{v}_n$. Therefore, $\mathcal{G}$ satisfies the conditions of Corollary \ref{theo3} and it has a uniform value $w_\infty$.  {\color{rouge} It follows that $\Gamma(\pi)$ has an asymptotic value equal to $w_\infty\big(\Phi(\pi)\big)$}.


The third subsection proves that (cf.  Proposition \ref{prop:p1-guarantee-Gamma}) player 1 can uniformly guarantee $w_\infty\big(\Phi(\pi)\big)$ in $\Gamma(\pi)$ by mimicking uniform $\varepsilon$-optimal strategies in $\mathcal{G}\big(\Phi(\pi)\big)$. 

The last subsection proves that  (cf. Proposition \ref{prop: p2-defend-Gamma}) that player $2$ can uniformly defend $w_\infty\big(\Phi(\pi)\big)$ by introducing a second auxiliary recursive game $\mathcal{R}$}.

\vspace{3mm}

\subsubsection{Canonical value function $\hat{v}_n$}

{\color{vert} We follow  in this subsection Gensbittel \textit{et al}. \cite{Gensbittel_2014} to introduce the canonical function $\hat{v}_n$. Note that to obtain results in this subsection,  the additional assumption that player 1 controls the transition  (made later in their paper) is not used in Gensbittel \textit{et al}. \cite{Gensbittel_2014}.} \\


{\color{vert}For convenience,  we extend the definition of $\Gamma(\pi)$ to a larger family of initial probability distributions.} Given any two finite sets $C'$ and $D'$ and $\pi\in \Delta^1(K\times C' \times D')$, $\Gamma(\pi)$ is the game where $(k,c',d')$ is drawn at stage $1$ according to $\pi$, player $1$ observes $c'$, player $2$ observes $d'$ (which is contained in $c'$ $\pi-a.s.$) and then from stage $2$ on, the game is played as previously described with signals in $C$ and $D$.
%

For any random variable $\xi$ defined on a probability space $(\Omega,\A,\PP)$ and $\F$ a sub $\sigma$-algebra of $\A$, let $\CL_{\PP}(\xi\mid \F)$ denote the conditional distribution of $\xi$ given $\F$, which is seen as a $\F$-measurable random variable\footnote{All random variables appearing here take only finitely many values so that the definition of conditional laws does not require any additional care about measurability.} and let $\CL_{\PP}(\xi)$ denote the distribution of $\xi$.

\begin{notation} For every strategy profile $(\si,\tau)\in \Sigma\times \T$, we denote the first-order belief of player $1$ on $K$ at stage $n$ given $h^1_n$ by $p_n\in B_1$, the second-order belief of player $2$, $i.e.$, his belief about the belief of player $1$ on $K$ at stage $n$ given $h^2_n$ by $x_n\in B_2$, and the distribution of $x_n$ by $\eta_n \in \De_f(B_2)$, $i.e.$,
$$p_{n} \triangleq  \CL_{\PP_{\si\tau}^{\pi}} (k_{n}\mid h^1_n), \quad  x_n\triangleq  \CL_{\PP^{\pi}_{\sigma\tau}}(p_n | h_{n}^{2}), \quad \text{and}\quad \eta_n\triangleq  \CL_{\PP^{\pi}_{\sigma\tau}}(x_n).$$ 
\end{notation}

\begin{notation}
\color{vert}{For any ${\pi} \in\De^1(K \times C' \times D')$  where $C'$ and $D'$ are two finite sets, the \emph{image} of $\pi$ is given by the following function in $\De_f(B_2)$}:
\begin{align*}
 \Phi({\pi})  &\triangleq  \CL_{\pi}\left( \CL_{\pi} \left( \CL_{\pi} ( k_1 | c_1 ) |d_1 \right)  \right), \\
                   & =\sum_{d\in D'} \pi(d) \delta_{\left( \sum_{c\in C'} \pi(c|d) \delta_{\pi(.|c,d)} \right)}.
\end{align*}
\end{notation}
{\color{orange2} {\color{vert} The interpretation of $\Phi(\pi)$ is as follows:} with probability $\pi(d)$, player $2$ observes the signal $d$ and {\color{vert}believes} that: player $1$ received the signal $c$ with probability $\pi(c|d)$ {\color{vert} and therefore  player $1$'s belief over $K$ } is $\pi(.|c,d)$.}

The assumptions imply that if $\pi \in \Delta^1(K\times C' \times D')$, then $\pi$ satisfies the following two properties:\\

P1) ${\pi}(c){\pi}(k,c,d)={\pi}(k,c){\pi}(c,d),\ \forall (k,c,d)\in K \times C'\times D'$.\\

P2) There exists a map $f_1=f_1^{\pi}:  C' \to B_2$ such that $x_1= f_1(c_1), \ {\pi} \text{-almost surely}$.\\

{\color{bleu}
Under P1) and P2), Proposition 1 of Gensbittel \textit{et al}. \cite{Gensbittel_2014} applies and we  obtain the following result, which states that {\color{vert}the value of any $n$-stage game depends on any initial distribution $\pi$ only through its  image $\Phi(\pi)$}}.

{\color{vert}\begin{proposition}\  [Gensbittel et al. 2014] \ Let $C'$ and $D'$ be two finite sets. Let $\pi,\pi'\in \De^1(K\times C'\times D')$ and let $n\geq 1$. If $\Phi(\pi)=\Phi(\pi)$, then $v_n(\pi)=v_n(\pi')$. 
\end{proposition}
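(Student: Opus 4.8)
The plan is to show that the $n$-stage value is a function of the initial belief hierarchy alone, and that this hierarchy is exactly what $\Phi$ records. Since $\pi,\pi'$ both lie in $\Delta^1(K\times C'\times D')$, they satisfy properties P1) and P2), so the most economical route is to verify these two hypotheses and invoke Proposition 1 of Gensbittel \textit{et al.} \cite{Gensbittel_2014}. Below I sketch the argument underlying that proposition, which I would carry out by induction on $n$.

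First I would identify the payoff-relevant information. Because player $1$ is more informed, at every stage his private information is summarised by his posterior $p_t=\CL(k_t\mid h^1_t)\in B_1$ on the current state, while the only thing player $2$ can condition on is his posterior $x_t=\CL(p_t\mid h^2_t)\in B_2$ on player $1$'s posterior. I would argue that one may restrict, without loss of value, player $1$ to strategies depending on the history only through $(x_t,p_t)$ and player $2$ to strategies depending only on $x_t$. This is the standard \emph{sufficient statistic} reduction for one-sided information: the assumptions that each signal contains the own action and that player $1$'s signal refines player $2$'s ensure that $(p_t,x_t)$ is consistent with common knowledge and evolves in a closed, Markovian fashion.

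The core step is a Shapley-type recursion. I would establish that
\[
v_n(\pi)=\mathrm{val}_{(s,t)}\;\EE\Big[\tfrac1n\, g(k_1,i_1,j_1)+\tfrac{n-1}{n}\,v_{n-1}(\pi^+)\Big],
\]
where $\pi^+$ is the (random) updated distribution after stage $1$ and the value operator ranges over belief-indexed behavioral choices. The point to verify is that both the stage-payoff term and the law of the successor image $\Phi(\pi^+)$ depend on $\pi$ only through $\Phi(\pi)=\eta_1$: the distribution $\eta_1$ of $x_1$, together with the commuting conditional structure encoded in P1)--P2), pins down the splitting of $\eta_1$ into the conditional laws used at stage $1$, hence the joint law of $(k_1,p_1,x_1)$ up to payoff-equivalence. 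Granting this, if $\Phi(\pi)=\Phi(\pi')$ the two recursions carry identical data, and the inductive hypothesis $v_{n-1}(\pi^+)=\hat v_{n-1}\big(\Phi(\pi^+)\big)$ closes the induction to give $v_n(\pi)=v_n(\pi')$; the base case is immediate since on active states the stage payoff vanishes.

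The main obstacle I anticipate is precisely the claim that the one-stage transition of the belief hierarchy is a function of $\Phi(\pi)$ alone, that is, that $\CL\big(\Phi(\pi^+)\big)$ is determined by $\eta_1$ and the belief-indexed strategies, independently of the auxiliary signal sets $C',D'$. This is where the hypothesis that player $1$ is more informed is essential: it forces player $2$'s second-order belief $x_t$ to be a deterministic function of player $2$'s observable signal, so that player $2$'s information can be tracked purely inside $B_2$ and the update map never needs to consult $\pi$ beyond its image. Making the sufficient-statistic reduction rigorous — verifying that collapsing histories to $(x_t,p_t)$ loses no value — is the technical heart, and it is exactly the content I would import from Gensbittel \textit{et al.} \cite{Gensbittel_2014}.
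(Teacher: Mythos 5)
Your proposal takes essentially the same route as the paper: both simply observe that any $\pi \in \Delta^1(K\times C'\times D')$ satisfies properties P1) and P2) (because player 2's signal is $\pi$-a.s.\ a deterministic function of player 1's signal, so that $k$ and $d$ are conditionally independent given $c$ and $x_1$ is $c_1$-measurable), and then invoke Proposition 1 of Gensbittel \emph{et al.} to conclude that $v_n$ depends on the initial distribution only through its image $\Phi(\pi)$. Your additional sketch of the induction and sufficient-statistic reduction inside the cited result is a plausible outline of that external proof but is not required for the statement as used here, and you correctly acknowledge that its technical heart is exactly what is imported from the reference rather than re-proved.
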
}


Reciprocally, given $\eta \in \De_f(B_2)$, let us construct a \textit{canonical distribution} $\pi$ satisfying $\Phi(\pi)=\eta$.\\

\noindent \textbf{The canonical game $\hat{\Gamma}(\eta)$.} \ Given $\eta \in \De_f(B_2)$. Define two finite sets $D':=supp(\eta)\subseteq  B_2$ and $C':=D'\times \left(\bigcup_{x\in supp(\eta)}supp(x)\right)$, and a probability distribution $\pi(\eta) \in \Delta(K\times C' \times D')$ by
\[ \forall (k,p)\in K \times B_1,\ x,x' \in B_2, \, {\pi}(k,(p,x),x'):=\begin{cases} \eta(x)x(p)p(k) \text{ if } x=x'  \\ \phantom{\eta(x)x(} 0 \phantom{)p(k)}{} \text{ if } x \neq x'. \end{cases}\]
By construction, $\pi(\eta)$ can be seen as an element of $\De^1(K\times C'\times D')$, and satisfies $\Phi(\pi(\eta))=\eta$. The \textit{canonical game} of $\Gamma(\pi)$ is denoted as $\hat{\Gamma}(\eta)$. Its value, denoted by $\hat{v}_n(\eta)$, is equal to $v_n(\pi(\eta))$ the value of $\Gamma_n(\pi(\eta))$. If $\eta=\de_x$ for some $x\in B_2$, we denote $\hat{v}_n(x)$ for $\hat{v}_n(\de_x)$.

{\color{bleu}
Informally, the game $\hat{\Gamma}(\eta)$ proceeds as follows: $\eta$ is common knowledge, player 2 is informed about the realization $x$ of a random variable with law $\eta$ (player 2 learns his beliefs). Then player 1 is informed about $x$ (his opponent's beliefs) and about the realization $p$ of a random variable with law $x$ (his own beliefs). The state variable is finally chosen according to $p$, but no player observes it.
}


By the above construction, one obtains that: $v_n(\pi)=\hat{v}_n\big(\Phi(\pi)\big)$ for any $\pi\in\De^1(K\times C'\times D')$. \\ 

{\color{jaune}The result below follows from Proposition 2 of Gensbittel \textit{et al}. \cite{Gensbittel_2014}. The Wasserstein metric $\mathbf{d}$ on $B_2=\De\big(B_1\big)$ is defined by:
\[
\forall x,y \in B_2,\  \mathbf{d}(x,y)=\sup_{f \in \mathcal{D_1}} \left| \int_{B_1}f(p)x(dp) - \int_{B_1}f(p)y(dp) \right|,
\]}
where $\mathcal{D_1}$ is the set of $1$-Lipschitz function from $(B_1,\|.\|_1)$ to $[-1,1].$

\begin{proposition}\label{canlip}[Gensbittel et al. 2014]
Let $\eta \in \De_f(B_2)$, $n\geq 1$ and  let $x \in  B_2$. Then,
$\hat{v}_{n}(\eta)$ is linear on $\De_f(B_2)$ and, as a mapping on $B_2$, $\hat{v}_{n}(x)$ is $1$-Lipschitz for the Wasserstein metric $\mathbf{d}$.
\end{proposition}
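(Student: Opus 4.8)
The plan is to derive both assertions from a single structural feature of the canonical game $\hat{\Gamma}(\eta)$: player $2$'s second-order belief $x$ is common knowledge after the first signals (player $2$ observes $d_1=x$, and player $1$, being more informed, recovers $x$ from $c_1=(p,x)$), whereas player $1$'s first-order belief $p$ stays private to player $1$. Consequently every strategy $\tau$ of player $2$ is a function of his own signal histories and does \emph{not} depend on the realized $p$, while player $1$ may condition on $p$. First I would record the key representation: fixing $x\in B_2$ and a strategy $\tau$ of player $2$, let $\phi_\tau(p):=\sup_{\sigma}\sum_{k\in K}p(k)\,\gamma_n^{k}(\sigma,\tau)$ be player $1$'s best-reply payoff when his belief is exactly $p$, where $\gamma_n^{k}(\sigma,\tau)$ denotes the $n$-stage payoff started from state $k$. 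Because player $1$ observes $p$ and the sub-problems indexed by $p$ lie on disjoint information sets, the supremum over $\sigma$ commutes with the expectation over $p\sim x$, so that
\[
\hat{v}_n(x)=\min_{\tau\in\T}\int_{B_1}\phi_\tau(p)\,dx(p).
\]
Crucially, the family $\{\phi_\tau\}_\tau$ does not depend on $x$, since $\tau$ ranges over signal-based strategies common to all canonical games.

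For linearity, I would take $\eta\in\De_f(B_2)$. Since player $2$ observes $x\sim\eta$ and player $1$ also learns $x$, both players may use strategies depending on $x$, and the realized value $x$ splits $\hat{\Gamma}(\eta)$ into the informationally independent games $\hat{\Gamma}(\delta_x)$, $x\in supp(\eta)$, weighted by $\eta(x)$. The $n$-stage payoff then decomposes as a $\eta$-average of the payoffs of these sub-games, the minmax separates across $x$, and therefore
\[
\hat{v}_n(\eta)=\sum_{x\in supp(\eta)}\eta(x)\,\hat{v}_n(\delta_x)=\int_{B_2}\hat{v}_n(x)\,\eta(dx),
\]
which is exactly linearity on $\De_f(B_2)$.

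For the Lipschitz bound I would first check that each $\phi_\tau$ belongs to $\mathcal{D}_1$, i.e. is $1$-Lipschitz from $(B_1,\|\cdot\|_1)$ to $[-1,1]$: it is a supremum over $\sigma$ of the affine maps $p\mapsto\sum_k p(k)\,\gamma_n^{k}(\sigma,\tau)$ whose coefficients $\gamma_n^{k}(\sigma,\tau)$ lie in $[-1,1]$; each such map is bounded by $1$ and $1$-Lipschitz for $\|\cdot\|_1$, and a supremum of functions in $\mathcal{D}_1$ stays in $\mathcal{D}_1$. By the very definition of the Wasserstein metric $\mathbf{d}$ as a supremum over $\mathcal{D}_1$, the map $x\mapsto\int_{B_1}\phi_\tau\,dx$ is then $1$-Lipschitz for $\mathbf{d}$. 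Applying the representation above to both $x$ and $y$ with the same family $\{\phi_\tau\}$ and using that an infimum of $1$-Lipschitz functions is $1$-Lipschitz, I get
\[
|\hat{v}_n(x)-\hat{v}_n(y)|\le \sup_{\tau\in\T}\left|\int_{B_1}\phi_\tau\,dx-\int_{B_1}\phi_\tau\,dy\right|\le \mathbf{d}(x,y).
\]

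The hard part, and the only place where the hypothesis that player $1$ is more informed is genuinely used, is justifying the interchange of $\sup_\sigma$ and $\int\,dx(p)$ that yields $\hat{v}_n(x)=\min_\tau\int\phi_\tau\,dx$: one must verify that, against a fixed $\tau$, player $1$ can optimize separately on each information set $\{p=\,\cdot\,\}$, which rests precisely on $\tau$ being measurable with respect to player $2$'s signals alone (hence independent of $p$) and on $x$ being common knowledge. Once this is secured, the rest is the elementary stability of $\mathcal{D}_1$ under suprema together with the Kantorovich--Rubinstein characterization of $\mathbf{d}$. Alternatively, since this is Proposition $2$ of Gensbittel \textit{et al}. \cite{Gensbittel_2014}, one may simply invoke it; the argument sketched here reproduces it in the present notation.
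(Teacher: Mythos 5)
Your argument is correct, but it takes a genuinely different route from the paper for a simple reason: the paper does not prove this proposition at all. The bracketed attribution ``[Gensbittel et al. 2014]'' is the proof --- the surrounding text says only that the result ``follows from Proposition 2 of Gensbittel \emph{et al}.'', so the paper's route is exactly your closing sentence (invoke the reference), and everything before it is content the paper leaves implicit. What you supply is a self-contained reconstruction built on the dual representation $\hat{v}_n(x)=\inf_{\tau}\int_{B_1}\phi_\tau(p)\,dx(p)$ with a family $\{\phi_\tau\}$ of test functions independent of $x$; this rests on (i) the minmax theorem for the finite $n$-stage canonical game, and (ii) the interchange of $\sup_\sigma$ with the finite sum over $p\in\mathrm{supp}(x)$, which is legitimate precisely because player 1's stage-one signal $(p,x)$ places the sub-problems indexed by $p$ on disjoint information sets while $\tau$ is measurable with respect to player 2's signals alone. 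Given that representation, your two conclusions follow exactly as you say: linearity is the standard decomposition of a game whose component $x$ is common knowledge to both players, and the $1$-Lipschitz bound is the Kantorovich--Rubinstein characterization of $\mathbf{d}$ applied to $\phi_\tau\in\mathcal{D}_1$ (a pointwise supremum of affine maps $p\mapsto\sum_k p(k)\gamma_n^k(\sigma,\tau)$ with coefficients in $[-1,1]$, hence $1$-Lipschitz for $\|\cdot\|_1$ and bounded by $1$), together with stability of $1$-Lipschitzness under the infimum over $\tau$. Two points to tighten in a polished version: write $\inf_\tau$ rather than $\min_\tau$ unless attainment is argued, and state explicitly that existence of the value of the $n$-stage game with signals (the minmax swap) is being invoked. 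As for what each approach buys: the paper's citation keeps Section 5 short and defers the burden to the reference, at the cost of hiding where the informational hypotheses act; your proof makes the step self-contained and isolates the structural fact --- player 2 cannot condition on $p$, while $x$ is known to both --- that simultaneously drives the linearity and the Lipschitz estimate.
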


Since the state space $B_2$ is totally bounded for the Wasserstein metric, we deduce by Arzela-Ascoli theorem that the set of functions $\{\hat{v}_n, n\geq 1\}$ is totally bounded.

%

\subsubsection{Auxiliary recursive game and asymptotic value}

\noindent Let $\G=(X,A,B,G,\ell)$ be the stochastic game {\color{vert}played in pure strategies}, defined by:
\begin{itemize}
\item the state space $X=\De_f(\De(K))$ (endowed with the Wasserstein metric $\mathbf{d}$).
\item the action space $A=\{f:\De(K)\to \Delta(I)\}$ and for all $x\in X$, $A(x)=\{\mathrm{supp}(x) \to \Delta(I)\}$ for player $1$.
\item the action space $B=\De(J)$ for player $2$. 
\item the payoff function $G:X \rightarrow [-1,1]$, defined for any $x\in X$ by
$G(x):= \sum_{p \in \Delta(X)} g(p)x(p)$. 
\item the transition function  $\ell: X \times A \times B \rightarrow \De_f(X)$
defined as
\textcolor{bleu}{$\ell(x,a,b):=\Phi(Q(x,a,b))$.}
Here, $Q(x,a,b)\in \Delta_f(K \times (\De(K)\times C) \times D)$ 
 is the joint distribution of $(k_2,(p,c_2),d_2)$ in the canonical game $\widehat{\Gamma}(\de_x)$ when the players play 
 $(\si_1,\tau_1)=(a,b)$ at stage $1$. The sets $K, C,D$ and $\mathrm{supp}(x)$ being finite, $Q$ can be seen
 as an element in $\Delta^1(K \times C' \times D')$ with $C'$ a finite subset of $\Delta(K) \times C$ and $D'= D$.
\end{itemize}

For any $x\in X$, we denote by $\mathcal{G}(x)$ the game starting at $x$. We extend the definition to $\mathcal{G}(z)$ for any  $z\in \Delta_f(X)$ such that the initial state is chosen randomly along $z$.\\
%
%


Since players observe when and where absorption occurs, their beliefs (first and second-order) are either supported on $K^0$ \big(therefore respectively in $\De(K^0)$ and in $\De(\De(K^0))$\big) or supported on each single point $k\in K^*$ (to be $\de_k$ and to be $\de_{\de_k}$). 

\begin{proposition}\label{prop:reduced}
Let $X_r=\Delta_f(\Delta(K^0)) \bigcup \{\de_{\de_k}: k\in K^*\}$. The game $\G^r=(X_r,A,B,G,\ell)$ with the state space  $X_r$ is well defined and is recursive with the absorbing states $\{\de_{\de_k}:k\in K^*\} $.
\end{proposition}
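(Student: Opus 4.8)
The plan is to check, directly from the belief decomposition recalled just before the statement, the two requirements hidden in the word ``recursive'': that $X_r$ is invariant under the dynamics (so that $\G^r$ is a well-defined game), and that the payoff/absorption pattern of a recursive game holds. The single structural fact driving everything is that, because players observe whether and where absorption occurs, at each stage player $2$'s second-order belief is of exactly one of two types: either the play is still active, player $1$'s first-order belief is carried by $K^0$, whence the second-order belief lies in $\De_f(\De(K^0))$; or absorption has already happened at some $k\in K^*$, both players know it, so player $1$'s belief is $\de_k$ and player $2$'s is $\de_{\de_k}$. Every reachable belief therefore lies in $X_r=\De_f(\De(K^0))\cup\{\de_{\de_k}: k\in K^*\}$.

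First I would establish invariance: $\ell(x,a,b)\in\De_f(X_r)$ for every $x\in X_r$ and every $(a,b)\in A(x)\times B$. Since $K$, $C$, $D$ and $\mathrm{supp}(x)$ are finite, $Q(x,a,b)$ is a finitely supported element of $\De^1(K\times C'\times D')$, so $\ell(x,a,b)=\Phi(Q(x,a,b))$ is automatically finitely supported in $B_2$; the only point to verify is that its support sits inside $X_r$. By construction $\Phi(Q(x,a,b))$ is the law of the stage-$2$ second-order belief $x_2$ in the canonical game $\widehat{\Gamma}(\de_x)$ when $(a,b)$ is played at stage $1$, so the claim is exactly that $x_2$ is always of one of the two admissible types. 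If $x=\de_{\de_k}$ with $k\in K^*$, absorption has already occurred and $q$ keeps the state at $k$ with frozen beliefs, giving $\ell(\de_{\de_k},a,b)=\de_{\de_{\de_k}}$. If $x\in\De_f(\De(K^0))$, the stage-$1$ state lies in $K^0$ and the transition $q$ sends it to some $k_2$; since the event $\{k_2\in K^*\}$ and the absorbing state are observed, conditioning on the realized signals gives a stage-$2$ belief supported on $\De(K^0)$ (hence in $\De_f(\De(K^0))$) on $\{k_2\in K^0\}$, and equal to $\de_{\de_k}$ on $\{k_2=k\in K^*\}$. In either case $x_2\in X_r$.

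The computation for $x=\de_{\de_k}$ already shows that each $\de_{\de_k}$ ($k\in K^*$) is absorbing for $\ell$, so it remains to treat the payoffs. On an absorbing state, $G(\de_{\de_k})=g(\de_k)=g(k)$ depends only on $k$, i.e. only on the state, as a recursive game requires. On an active state $x\in\De_f(\De(K^0))$, every $p$ in the support of $x$ is carried by $K^0$, on which $g\equiv 0$, so $g(p)=0$ for all such $p$ and hence $G(x)=0$. Together with the invariance this shows that $\G^r$ is a well-defined recursive game with active states $\De_f(\De(K^0))$ and absorbing states $\{\de_{\de_k}: k\in K^*\}$.

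The main obstacle is the invariance step, and inside it the precise bookkeeping of how the conditional laws defining $\Phi(Q(x,a,b))$ evolve through one stage of the canonical game. The real content is that the ``observation of absorption'' assumption forces the posteriors, once $K^*$ is reached, to collapse onto the single points $\de_k$ and $\de_{\de_k}$ rather than spreading out over $K^*$; this collapse is exactly what confines the image of $\Phi$ to $X_r$, and it is where the recursive hypothesis on $\Gamma$, combined with the observation assumption, is used in an essential way.
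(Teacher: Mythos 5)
Your proof is correct and follows exactly the paper's own (very terse) justification: the paper proves this proposition only by the sentence preceding it, namely that since players observe when and where absorption occurs, first- and second-order beliefs are either supported on $K^0$ or collapse to $\de_k$ and $\de_{\de_k}$ for $k\in K^*$, which is precisely the structural fact your argument is built on. Your write-up simply makes explicit the invariance, absorption, and payoff verifications that the paper leaves implicit.
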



%
{\color{rouge} In the following, we identify each $\de_{\de_k}$ with $k$ itself for any $k\in K^*$, and write $X_r=\Delta_f(\Delta(K^0)) \cup K^*$.} {\color{jaune}  By abuse of notations, we write again $X$ for $X_r$ and $\G$ for $\G^r$.}  


{
\color{vert}

\begin{proposition}\label{equal}
 For every $n \geq 1$, the $n$-stage game $\G_n$ has a value $w_n$ in pure strategies. Moreover, for every $x\in X$, $w_n(x)=\hat{v}_n(x)$.
\end{proposition}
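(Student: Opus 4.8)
The plan is to prove the statement by induction on $n$, showing at each step that the Shapley (dynamic-programming) equation for the auxiliary game $\G_n$ coincides, at every state $x\in X$, with the Shapley equation of the original game $\Gamma$ evaluated at the canonical distribution $\pi(\delta_x)$. The base case is immediate: since the payoff $G$ is action-independent, the one-stage game $\G_1$ has value $w_1(x)=G(x)$, while $\hat v_1(x)=v_1(\pi(\delta_x))=\EE_{\pi(\delta_x)}[g(k_1)]=G(x)$ by the recursive structure (the stage payoff is $0$ on active beliefs and equals the absorbing payoff on $\{\delta_{\delta_k}:k\in K^*\}$). Absorbing states are handled trivially throughout, since there $\ell(x,a,b)=\delta_x$ and $w_n(x)=G(x)=\hat v_n(x)$.

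For the inductive step, I would assume $w_{n-1}=\hat v_{n-1}$ and that $\G_{n-1}$ is valued in pure strategies, and write the Shapley equation for $\G_n$,
\[
w_n(x)=\mathrm{val}_{a\in A(x),\,b\in B}\left[\tfrac1n G(x)+\tfrac{n-1}{n}\,\EE_{y\sim\ell(x,a,b)}\big[w_{n-1}(y)\big]\right].
\]
The crucial simplification comes from Proposition \ref{canlip}: since $\hat v_{n-1}$ is linear on $\De_f(B_2)$, the expectation collapses,
\[
\EE_{y\sim\ell(x,a,b)}\big[\hat v_{n-1}(y)\big]=\hat v_{n-1}\big(\ell(x,a,b)\big)=\hat v_{n-1}\big(\Phi(Q(x,a,b))\big)=v_{n-1}\big(Q(x,a,b)\big),
\]
where the last equality is the defining relation $v_m(\pi)=\hat v_m(\Phi(\pi))$ (valid since $Q(x,a,b)\in\De^1(K\times C'\times D')$). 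The resulting expression $\mathrm{val}_{a,b}[\tfrac1n G(x)+\tfrac{n-1}{n}v_{n-1}(Q(x,a,b))]$ is exactly the Shapley operator of $\Gamma$ applied to $v_{n-1}$ at the canonical initial distribution $\pi(\delta_x)$: a pure action $a\in A(x)$ is player $1$'s stage-$1$ strategy as a function of his first-order belief $p\in\mathrm{supp}(x)$, $b\in B=\De(J)$ is player $2$'s stage-$1$ mixed action, and $Q(x,a,b)$ is by construction the law of the continuation data after stage $1$. Hence the value equals $v_n(\pi(\delta_x))=\hat v_n(x)$, giving $w_n=\hat v_n$; choosing the optimal stage actions stagewise yields a pure Markov optimal strategy in $\G_n$.

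Two points require genuine care. The first is the Bayesian-consistency identity underlying the collapse above, namely that from second-order belief $x$ and under stage-$1$ actions $(a,b)$ the law of the next second-order belief $x_2$ is precisely $\ell(x,a,b)=\Phi(Q(x,a,b))$; this is exactly the property built into the construction of $Q$ and $\Phi$, and is the conceptual link between $\G$ and $\Gamma$. The second, which I expect to be the main obstacle, is justifying that the one-shot game in the Shapley equation has a value and admits optimal actions, so that the value of $\G_n$ is attained in pure strategies of $\G$. The cleanest route is to observe that this one-shot game is exactly the stage game arising in the recursive decomposition of the finite repeated game $\Gamma$ with signals at $\pi(\delta_x)$; its value and its optimal stage strategies exist by the recursive structure established in Gensbittel \textit{et al}. \cite{Gensbittel_2014} together with the standard theory of finite repeated games, so that existence in $\G$ is inherited rather than re-derived. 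The continuity of $\hat v_{n-1}$ ($1$-Lipschitz by Proposition \ref{canlip}) together with the compactness of $A(x)=\prod_{p\in\mathrm{supp}(x)}\De(I)$ and $B=\De(J)$ ensures that the relevant expectations and the minimax operation are well-behaved, which closes the induction.
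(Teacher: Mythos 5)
Your proof is correct and follows essentially the same route as the paper: induction on $n$, with the Shapley equation for $\hat{v}_n$ (quoted from Gensbittel \textit{et al}., via the linearity and Lipschitz properties of Proposition \ref{canlip}) as the key external input, and pure optimal strategies in $\G_n$ obtained by playing stagewise optimal actions. The only cosmetic difference is that you collapse $\EE_{\ell(x,a,b)}[\hat v_{n-1}]$ explicitly by linearity and identify the one-shot game with the stage game of $\Gamma_n(\pi(\delta_x))$, whereas the paper quotes the Shapley equation directly in the form involving $\EE_{\ell(x,a,b)}[\hat v_n(x')]$ and then verifies the guarantee by an explicit concatenated pure strategy.
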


\begin{proof}
{\color{violet2} We prove the result by induction on $n\geq 1$. Let $n=1$. Given $x\in X$, the game $\G_1(x)$ has a value $w_1(x)$ and it is equal to $w_1(x)=G(x)=\sum_{p} g(p)x(p)$. It is equal to $\hat{v}_1(x)$ by construction. 
This initializes our induction.
}
Let $n\geq 1$ such that $w_n$, the value of $\G_n$, exists in pure strategies, and  for every $x\in X$, $w_n(x)=\hat{v}_n(x)$. Gensbittel \textit{et al}. \cite{Gensbittel_2014} showed in the proof of their Proposition $5$ that the family $\{\hat{v}_n, n\geq 1\}$ satisfies the Shapley equation: for every $x\in X$ and for every $n\geq 1$,
\begin{align*}
\hat{v}_{n+1}(x) & = \sup_{a \in A(x)} \inf_{b \in B } \EE_{\ell(x,a,b)} \left[ \frac{1}{n+1} g(x,a,b) +\frac{n}{n+1} \hat{v}_{n}(x') \right]\\
                                 & = \inf_{b \in B }  \sup_{a \in A(x)} \EE_{\ell(x,a,b)} \left[ \frac{1}{n+1} g(x,a,b) +\frac{n}{n+1} \hat{v}_{n}(x') \right],
\end{align*}
where the random variable $x'\in X$ is chosen along the law $\ell(x, a,b)(\cdot)$. By the inductive assumption, we can replace $\hat{v}_n$ by $w_n$ on the right hand side of above equation, to obtain that:
\begin{align*}
\forall x\in X,\ \  \hat{v}_{n+1}(x)= & \sup_{a \in A(x)} \inf_{b \in B } \EE_{\ell(x,a,b)} \left[ \frac{1}{n+1} g(x) +\frac{n}{n+1} w_{n}(x') \right] \\
 =& \inf_{b \in B }  \sup_{a \in A(x)} \EE_{\ell(x,a,b)} \left[ \frac{1}{n+1} g(x) +\frac{n}{n+1} w_{n}(x') \right].
\end{align*}
We now use the above equation to show that both players can guarantee $\hat{v}_{n+1}(x)$ in $\G_{n+1}(x)$ in pure strategies. Let $x\in X$ be fixed and $a^*$ be an action of player $1$ such that
\begin{align}\label{bouh2}
\inf_{b \in B } \EE_{\ell(x,a^*,b)} \left[ \frac{1}{n+1} g(x) +\frac{n}{n+1} w_{n}(x') \right] \geq \hat{v}_{n+1}(x).
\end{align}
Again by inductive assumption, let $\sigma^*_n(x')$ be an optimal pure strategy in $\G_n(x'),\forall x'\in X$. We define the strategy $\sigma^*_{n+1}(x)$ to play $a^*$ at the first stage and then $\sigma^*_{n}(x')$ where $x'$ is the current state at stage $2$. $\sigma^*_{n+1}(x)$ is pure and guarantees player 1 the payoff in $\G_{n+1}(x)$ no smaller than the left hand side of Equation (\ref{bouh2}), hence $\hat{v}_{n+1}(x)$. {\color{jaune} A similar construction for player 2 finishes the inductive proof.}  \end{proof} } \\ 
 
Therefore, {\color{vert}by Proposition \ref{canlip} the family of $n$-stage values $\{w_n\}$ is totally bounded for the uniform norm}, and we can apply Corollary $\ref{theo3}$ (with infinite sets of actions) for the game $\G$. 
%

\begin{proposition}\label{aux_uniforme}
For every $z\in \Delta_f(X)$, the game $\G(z)$ has a uniform value denoted by $w^*_{\infty}(z)$. Moreover both players can uniformly guarantee the value with pure strategies that depend on the history of states but not on the past actions.
\end{proposition}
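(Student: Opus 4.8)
The plan is to read off the conclusion from Corollary \ref{theo3} at deterministic initial states and then to lift it to an arbitrary initial distribution $z\in\Delta_f(X)$ by taking expectations, using the fact that the stage threshold produced by Corollary \ref{theo3} is uniform over initial states.

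First I would record that the three hypotheses of Corollary \ref{theo3}, in the form allowing infinite action sets (Remark \ref{theo3_com}), are exactly what the preceding results provide: $\mathcal{G}$ is recursive with absorbing states $K^*$ (Proposition \ref{prop:reduced}); for every $n\geq 1$ the game $\mathcal{G}_n$ has a value $w_n=\hat{v}_n$ attained in pure strategies (Proposition \ref{equal}); and $\{w_n,n\geq 1\}$ is totally bounded for the uniform norm (Proposition \ref{canlip} together with the Arzel\`a--Ascoli argument recorded above). Corollary \ref{theo3} then gives a uniform value $w_\infty$ for $\mathcal{G}(x)$ at every $x\in X$, guaranteed by pure strategies $\sigma^*_\varepsilon$ and $\tau^*_\varepsilon$ that depend only on the history of states. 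The point I would stress here is that, by Remark \ref{rem:uniformcon}, the threshold $N_0$ appearing in these guarantees can be chosen independently of the initial state.

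Next I would extend to $z=\sum_i z_i\delta_{x_i}\in\Delta_f(X)$ by setting $w^*_\infty(z):=\EE_{x_1\sim z}[w_\infty(x_1)]=\sum_i z_i\,w_\infty(x_i)$. Since both players observe the realized initial state, the same pure state-dependent strategy $\sigma^*_\varepsilon$ is admissible in $\mathcal{G}(z)$, and from $\gamma_n(z,\sigma^*_\varepsilon,\tau)=\EE_{x_1\sim z}[\gamma_n(x_1,\sigma^*_\varepsilon,\tau)]$ together with $\gamma_n(x_1,\sigma^*_\varepsilon,\tau)\geq w_\infty(x_1)-\varepsilon$ for all $n\geq N_0$, all $x_1$ and all $\tau$, I would deduce $\gamma_n(z,\sigma^*_\varepsilon,\tau)\geq w^*_\infty(z)-\varepsilon$ for all $n\geq N_0$. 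The symmetric estimate with $\tau^*_\varepsilon$ shows player $2$ uniformly guarantees $w^*_\infty(z)+\varepsilon$, which identifies $w^*_\infty(z)$ as the uniform value of $\mathcal{G}(z)$, guaranteed on both sides by pure strategies depending only on the state history.

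The essentially new work is already consumed in the first step, so the only point that genuinely needs care is the uniformity of $N_0$ over initial states: without it, averaging over $z$ would not preserve a single stage threshold, and the formula $w^*_\infty(z)=\EE_{x_1\sim z}[w_\infty(x_1)]$ could fail to define a true uniform value. This is precisely the strengthening highlighted in Remark \ref{rem:uniformcon}, and it is what makes the expectation argument go through; I therefore expect this to be the main (and essentially the only) obstacle.
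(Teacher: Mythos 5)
Your proposal is correct and matches the paper's own route: the paper proves this proposition precisely by verifying that $\G$ is recursive (Proposition \ref{prop:reduced}), that the $n$-stage values exist in pure strategies and equal $\hat{v}_n$ (Proposition \ref{equal}), and that $\{w_n\}$ is totally bounded (Proposition \ref{canlip} plus Arzel\`a--Ascoli), and then invoking Corollary \ref{theo3} with infinite action sets via Remark \ref{theo3_com}. The only part the paper leaves implicit is the passage from deterministic initial states to $z\in\Delta_f(X)$, which you handle correctly by linearity of the expectation together with the state-independence of $N_0$ guaranteed by the paper's definition of uniform guarantee (Remark \ref{rem:uniformcon}).
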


{\color{vert} Since $v_n(\pi)=\hat{v}_n\big(\Phi(\pi)\big)=w_n\big(\Phi(\pi)\big)$, and the same construction of the canonical value function $\hat{v}_\lambda$ implies $v_\lambda(\pi)=\hat{v}_\lambda\big(\Phi(\pi)\big)=w_\lambda\big(\Phi(\pi)\big)$, we deduce the existence of the asymptotic value in $\Gamma(\pi)$. in the game $\Gamma(\pi)$ for every $\pi \in \Delta^1(K\times C \times D)$.}

\begin{proposition}\label{prop:asymptotic}For every $\pi \in \Delta^1(K\times C\times D)$, we have  \[
\lim_{n\to\infty} v_n(\pi)=\lim_{\lambda\to0}v_\lambda(\pi)=w^*_\infty(\Phi(\pi)).
\]
\end{proposition}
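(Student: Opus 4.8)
The plan is to transport the uniform convergence of the values of the auxiliary recursive game $\G$, which has already been established through Theorem \ref{theo1} and its corollaries, back to the original game $\Gamma(\pi)$ by means of two pointwise identities, $v_n(\pi)=w_n\big(\Phi(\pi)\big)$ and $v_\lambda(\pi)=w_\lambda\big(\Phi(\pi)\big)$. Once these identities are available, the desired convergence in $\Gamma(\pi)$ follows at once by evaluating the (uniform) limit of the values of $\G$ at the single point $\Phi(\pi)\in\De_f(B_2)$.

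First I would handle the $n$-stage side. Combining Proposition \ref{equal} (which gives $w_n=\hat{v}_n$ on $X$) with the canonical construction (which gives $v_n(\pi)=\hat{v}_n\big(\Phi(\pi)\big)$), one obtains $v_n(\pi)=w_n\big(\Phi(\pi)\big)$ for every $n\geq1$. By Proposition \ref{aux_uniforme}, $\G$ admits a uniform value $w^*_\infty$, so Remark \ref{rem:uniformcon} ensures that $(w_n)_{n\geq1}$ converges uniformly on $X$ to $w^*_\infty$. Evaluating at $\Phi(\pi)$ then yields $\lim_{n\to\infty}v_n(\pi)=w^*_\infty\big(\Phi(\pi)\big)$.

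Next I would treat the $\lambda$-discounted side, which requires first the analogue $v_\lambda(\pi)=w_\lambda\big(\Phi(\pi)\big)$. The point is that, as in Gensbittel \textit{et al}. \cite{Gensbittel_2014}, $v_\lambda$ also depends on the initial distribution only through $\Phi$, so it induces a canonical function $\hat{v}_\lambda$ with $v_\lambda(\pi)=\hat{v}_\lambda\big(\Phi(\pi)\big)$, and the discounted Shapley equation solved by $\hat{v}_\lambda$ is exactly the one solved by the $\lambda$-discounted value $w_\lambda$ of $\G$; by uniqueness of the fixed point of the contracting discounted Shapley operator, $\hat{v}_\lambda=w_\lambda$ on $X$. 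Since $(w_n)$ converges uniformly, Corollary \ref{coro:UCV_lambda} applied to $\G$ guarantees that $(w_\lambda)$ converges uniformly to the same limit $w^*_\infty$ as $\lambda\to0$; evaluating at $\Phi(\pi)$ gives $\lim_{\lambda\to0}v_\lambda(\pi)=w^*_\infty\big(\Phi(\pi)\big)$, and comparing with the $n$-stage limit closes the argument.

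The one genuinely new ingredient is the discounted identity $v_\lambda(\pi)=w_\lambda\big(\Phi(\pi)\big)$: it is the discounted counterpart of Proposition \ref{equal}, and the hard part is that it cannot be obtained by the finite induction used there, so I expect to rely instead on the contraction property of the discounted Shapley operator to match $\hat{v}_\lambda$ with $w_\lambda$. Everything else reduces to invoking the already-proven existence of a uniform value for $\G$ together with the uniform Tauberian theorem of Corollary \ref{coro:UCV_lambda}.
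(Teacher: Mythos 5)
Your proposal is correct and follows essentially the same route as the paper: the paper likewise deduces the result from $v_n(\pi)=\hat{v}_n\big(\Phi(\pi)\big)=w_n\big(\Phi(\pi)\big)$, the analogous discounted identity $v_\lambda(\pi)=\hat{v}_\lambda\big(\Phi(\pi)\big)=w_\lambda\big(\Phi(\pi)\big)$ (which it asserts holds ``by the same construction''), and the existence of the uniform value $w^*_\infty$ of $\G$ obtained from Corollary \ref{theo3}. Your write-up merely makes explicit what the paper leaves terse, namely the contraction/fixed-point justification of $\hat{v}_\lambda=w_\lambda$ and the passage from the uniform value of $\G$ to uniform convergence of $(w_n)$ and $(w_\lambda)$.
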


%
%

\subsubsection{Player $1$ uniformly guarantees $w_\infty^*$}

We first show that player 1 is able to compute in the original game $(p_t)_{t\geq 1}$ his first-order beliefs and {\color{vert} $(x_t)_{t\geq 1}$ the second-order beliefs of player 2} without knowing the strategy of player 2.

\begin{lemme} \label{lem:p1-calculate-belief} Let $(\sigma,\tau)$ be a pair of strategies in $\Gamma(\pi)$. For every $t\geq 1$, $p_t=\mathcal{L}_{\PP_{\sigma,\tau}^\pi}(k_t|h_t^1)$ and $x_t=\mathcal{L}_{\PP_{\sigma,\tau}^\pi}(p_t|h_t^2)$ are independent of $\tau$ for all $h_t^1, h_t^2$.
\end{lemme}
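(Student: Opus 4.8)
The plan is to exploit the two structural hypotheses --- that player $1$ is more informed than player $2$, and that each signal encodes the signalled player's own action --- to show that the whole history $h_t^2$ of player $2$ is a deterministic, strategy-free function of player $1$'s history $h_t^1$. First I would record this reconstruction. Since every transition $q$ is supported on $E=\{(k,c,d):\hat{d}(c)=d\}$ (and $\pi(E)=1$), each realized signal satisfies $d_s=\hat{d}(c_s)$, so player $1$ recovers $d_s$ from $c_s$; and since player $2$'s signal contains his own action, $j_s=\hat{\jmath}(d_{s+1})=\hat{\jmath}(\hat{d}(c_{s+1}))$, so player $1$ recovers $j_s$ from $c_{s+1}$. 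Collecting these, there is a fixed map $\Psi:H_t^1\to H_t^2$, independent of strategies, with $h_t^2=\Psi(h_t^1)$ holding $\PP_{\sigma,\tau}^\pi$-almost surely for every $(\sigma,\tau)$.

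Next I would treat $p_t$. Fixing a history $\bar{h}_t^1=(\bar{c}_1,\bar{i}_1,\dots,\bar{c}_t)$ of positive probability, I would write the joint law $\PP_{\sigma,\tau}^\pi(k_t=k,\,h_t^1=\bar{h}_t^1)$ by summing only over the genuinely hidden intermediate states $k_1,\dots,k_{t-1}$: the signals $\bar{d}_s=\hat{d}(\bar{c}_s)$, the actions $\bar{j}_s=\hat{\jmath}(\hat{d}(\bar{c}_{s+1}))$, and hence each companion history $\bar{h}_s^2$ are all forced by $\bar{h}_t^1$. The summand then factors as a product of $\sigma$-terms $\sigma_s(\bar{i}_s\mid\bar{h}_s^1)$, $\tau$-terms $\tau_s(\bar{j}_s\mid\bar{h}_s^2)$, and a purely kinematic term built from $\pi$ and $q$. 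The key point is that the $\sigma$- and $\tau$-factors do not depend on the summation variables $k_1,\dots,k_{t-1}$, so they pull out of the sum; forming $p_t$ by Bayes' rule then cancels them against the normalising constant, expressing $p_t$ through $\pi$ and $q$ alone. In particular $p_t$ is independent of $\tau$ (indeed also of $\sigma$), so $p_t=P_t(h_t^1)$ for a fixed strategy-free map $P_t$.

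Finally I would handle $x_t=\mathcal{L}_{\PP_{\sigma,\tau}^\pi}(p_t\mid h_t^2)$. Using $p_t=P_t(h_t^1)$, for a fixed $\bar{h}_t^2$ and value $p\in B_1$ I would write $x_t(\{p\})$ as the Bayes ratio of $\sum\PP_{\sigma,\tau}^\pi(h_t^1=\bar{h}_t^1)$ over those $\bar{h}_t^1$ with $\Psi(\bar{h}_t^1)=\bar{h}_t^2$ and $P_t(\bar{h}_t^1)=p$, divided by the same sum without the constraint $P_t(\bar{h}_t^1)=p$. In $\PP_{\sigma,\tau}^\pi(h_t^1=\bar{h}_t^1)$ the $\tau$-factor $\prod_s\tau_s(\bar{j}_s\mid\bar{h}_s^2)$ depends on $\bar{h}_t^1$ only through $\Psi(\bar{h}_t^1)=\bar{h}_t^2$, which is held fixed throughout both sums; it is therefore a common constant and cancels, leaving $x_t$ independent of $\tau$ (though still depending on $\sigma$ and $\pi$).

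The main obstacle is bookkeeping rather than conceptual: I must verify carefully that conditioning on $\bar{h}_t^1$ genuinely forces each $\bar{j}_s$ and the entire companion history $\bar{h}_s^2$, so that the $\tau$-factors really become constants that pull out and cancel, and I must restrict attention to histories of positive probability so that the conditional laws and the map $\Psi$ are well defined. The multiplicative factorization on which every cancellation rests is exactly what the Markov structure of $q$ --- the fact that $(k_{s+1},c_{s+1},d_{s+1})$ depends on the past only through $(k_s,i_s,j_s)$ --- provides.
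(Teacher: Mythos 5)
Your proposal is correct and follows essentially the same route as the paper: the reconstruction map $\Psi$ is exactly the paper's observation that $i_{t-1}$, $j_{t-1}$ and $d_t$ are $c_t$-measurable (and $j_{t-1}$ is $d_t$-measurable), and both arguments then factor $\PP^\pi_{\sigma,\tau}(h_t)$ into a kinematic term $\beta(h_t)$ built from $\pi$ and $q$ times $\sigma$- and $\tau$-factors, which are constant across the Bayes sums and cancel --- giving $p_t$ free of both strategies and $x_t$ free of $\tau$ (but not of $\sigma$). The only cosmetic difference is that you group player 1's histories by the value of $P_t(\bar h_t^1)$, whereas the paper writes $x_t$ as a mixture of point masses $\delta_{\mathcal{L}(k_t|c'_1,\dots,c'_t)}$ over signal sequences compatible with $\bar h_t^2$; these are the same computation.
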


\begin{proof} 
{\color{violet2}
Let $(\sigma,\tau)$ be a pair of strategies and $\pi \in \Delta(K\times C\times D)$, we write $\PP:=\PP^\pi_{\sigma,\tau}$  for short. 
Let $h=(k_s,c_s,d_s,i_s,j_s)_{s\geq 1} \in H_\infty$. For any $t\geq 1$, we define \[\beta(h_t) \equal \pi(k_1,c_1,d_1) \prod\limits_{\ell=1}^{t-1} q(k_\ell,i_\ell,j_\ell )(k_{\ell+1},c_{\ell+1},d_{\ell+1})\] with the convention $\beta(k_1,c_1,d_1)=\pi(k_1,c_1,d_1)$. These notations help to write
\[
\PP (h_t)= \beta(h_t)\prod_{\ell=1}^{t-1} \sigma_t(h^1_\ell)[i_\ell] \tau_\ell(h^2_\ell)[j_\ell].
\]
The key point is that under $\PP(\cdot)$, $i_{t-1}$, $j_{t-1}$ and $d_t$ are $c_t$-measurable whereas $j_{t-1}$ is $d_t$-measurable. It follows that after observing $(c_1,...,c_t)$, player $1$'s belief is:
\begin{align*}
p_t(k_t)& = \PP \left(k_{t}|c_1,...,c_{t} \right) = \PP \left(k_{t}| c_1,d_1,i_1,j_1,....,c_t,d_t \right)\\
& =\frac{\sum_{k'_1,..., k'_{t-1}} \PP(k'_1,c_1,d_1,i_1,j_1...,k_t,c_t,d_t)}{\sum_{k'_1,..., k'_{t-1},k'_t} \PP(k'_1,c_1,d_1,i_1,...,k'_t,c_t,d_t)} = \frac{\sum_{k'_1,...,k'_{t-1}} \beta(k'_1,c_1,d_1,i_1,j_1,...,k_t,c_t,d_t)}{ \sum_{k'_1,..., k'_{t-1},k_t} \beta(k'_1,c_1,d_1,i_1,j_1,...,k'_t,c_t,d_t)},\end{align*}
which depends on neither $\sigma$ nor $\tau$. We now consider $x_{t}=\mathcal{L}_{\PP}\big(p_{t}|d_1,...d_t\big)$ for a given observed history $h_{t}^2=(d_1,...,d_t)$ of player $2$, which is decomposed as: 
\[
x_{t}=\mathcal{L}_{\PP}\Big(\mathcal{L}_{\PP}(k_{t+1}|c_1,...,c_t)|d_1,..,d_t\Big)=\sum_{c'_1,...,c'_t} \PP(c'_1,...,c'_t|d_1,...,d_t)\de_{\mathcal{L}_{\PP}(k_{t+1}|c'_1,...,c'_t)}.
\]
By the previous result that $\mathcal{L}_{\PP}(k_{t+1}|c'_1,...,c'_t)$ does not depend on $\tau$, it is sufficient to prove that $\PP(c'_1,...,c'_t|d_1,...,d_t)$ is independent of $\tau$. {\color{jaune} Let us consider a sequence of signals $(c'_1,...,c'_t)$ inducing $(d_1,...,d_t)$ that we complete with  $(i'_1,...,i'_t)$ the sequence of actions it contains. This gives us}
\begin{align*}
\PP(c'_1,...,c'_t|d_1,...,d_t)&=\PP(c'_1,d_1,i'_1,j_1,...,c'_t,d_t|d_1,j_1,...,d_t) =\frac{ \PP(c'_1,d_1,i'_1,j_1,...,c'_t,d_t)}{ \PP(d_1,j_1,...,d_t)}\\
                                   &= \frac{\sum_{k'_1,...,k'_t}  \beta(h'_t) \prod_{\ell=1}^{t-1}\sigma_\ell(h^{'1}_\ell)[i'_\ell]}{\sum_{k'_1,...,k'_t} \sum_{c'_1,...,c'_t} \beta(h'_t) \prod_{\ell=1}^{t-1} \sigma_\ell(h^{'1}_\ell)[i'_\ell]},
\end{align*}
where $h'_\ell=(k'_1, c'_1,d_1,i'_1,j_1,...,k'_\ell, c'_\ell,d_\ell)$ is the history of stage $\ell$ and $h^{'1}_\ell=(c'_1,i'_1,...,c'_\ell)$ is the private history of player 1 of stage $\ell$. The right hand side of the above equation does not depend on the strategy of player $2$ {\color{jaune} and the result is obtained}.}
\end{proof}

\vspace{4mm}

Before building the strategy of player $1$, we prove that the transition rule $\ell(\cdot):X\times A\times B \to \De_f(X)$ of the auxiliary game is linear with respect to $b\in B$ (the action of player $2$).
 
\begin{lemme}\label{lem:ell-b} For any $(x,a)\in X\times A$ and $\overline{b}=\sum_{s\in S}\lambda_s b_s$ a convex combination in $B$, we have $$\ell(x,a, \overline{b}) =\sum_{s\in S} \lambda_s \ell \left(x,a,b_s \right).$$
\end{lemme}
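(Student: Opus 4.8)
The plan is as follows. The map $\Phi$ is \emph{not} linear, since it is built from Bayesian conditional laws (ratios of the underlying probabilities), so the identity cannot come from linearity of $\Phi$ applied termwise. Instead I would first reduce to pure actions by establishing the intermediate identity
\[
\Phi\big(Q(x,a,b)\big)=\sum_{j\in J}b(j)\,\Phi\big(Q(x,a,\delta_j)\big),
\]
after which linearity in $b$, and hence the statement for an arbitrary convex combination $\overline{b}=\sum_{s\in S}\lambda_s b_s$, is immediate.

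First I would check that $Q$ itself is affine in $b$. In the canonical game $\widehat{\Gamma}(\de_x)$ the triple $(p,k_1,i_1)$ is generated independently of $j_1$, the action $j_1$ is drawn according to $b$, and the new triple $(k_2,c_2,d_2)$ is then drawn from $q(k_1,i_1,j_1)$. Conditioning on $j_1=j$ is therefore exactly playing the pure action $\delta_j$, so writing $Q^j:=Q(x,a,\delta_j)$ we get $Q(x,a,b)=\sum_{j}b(j)\,Q^j$ as distributions on $K\times(\Delta(K)\times C)\times D$.

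The crux is the second step. Here I would use the standing assumption that each player's signal contains his own action, so that player $2$'s stage-$1$ action can be recovered from his signal via $j_1=\hat{\jmath}(d_2)$; hence conditioning on $d_2$ pins down $j_1$. Setting $\pi:=Q(x,a,b)$ and writing $\Phi(\pi)=\sum_{d}\pi(d)\,\delta_{\mu_d}$ with $\mu_d=\sum_{c}\pi(c\mid d)\,\delta_{\pi(\cdot\mid c,d)}$, I would group the outer sum according to the value $j=\hat{\jmath}(d)$. The block $\{d:\hat{\jmath}(d)=j\}$ is exactly the event $\{j_1=j\}$, so it carries $\pi$-mass $b(j)$ and conditional law $Q^j$; and because conditioning on such a $d$ already fixes $j_1=j$, the inner conditionals satisfy $\pi(c\mid d)=Q^j(c\mid d)$ and $\pi(\cdot\mid c,d)=Q^j(\cdot\mid c,d)$, so that $\mu_d$ computed from $\pi$ coincides with the one computed from $Q^j$. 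Summing the block gives $\sum_{d:\hat{\jmath}(d)=j}\pi(d)\,\delta_{\mu_d}=b(j)\,\Phi(Q^j)$, and summing over $j$ yields the displayed identity.

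Finally, substituting $\overline{b}=\sum_s\lambda_s b_s$ and using $\overline{b}(j)=\sum_s\lambda_s b_s(j)$ gives $\ell(x,a,\overline{b})=\sum_j\overline{b}(j)\Phi(Q^j)=\sum_s\lambda_s\sum_j b_s(j)\Phi(Q^j)=\sum_s\lambda_s\,\ell(x,a,b_s)$, as required. The only delicate point, and the step I expect to be the main obstacle, is the matching of the Bayesian conditionals across the decomposition: this is precisely where the hypothesis that player $2$ observes his own action is used, since it makes the posterior ``block-diagonal'' in $j_1$. (The more-informed assumption itself enters only to ensure that each $Q(x,a,b)$ lies in $\Delta^1$, so that $\Phi$ is well defined.)
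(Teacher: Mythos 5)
Your proof is correct and follows essentially the same route as the paper: both arguments rest on the affinity of $Q(x,a,\cdot)$ in $b$ together with the key observation that the Bayesian conditionals defining $\Phi$ are unaffected by $b$, because player 2's signal determines his own stage-1 action. The paper establishes this last point by explicit cancellation of the factor $b[\hat{\jmath}(\cdot)]$ in the Bayes ratios, whereas you obtain it through the block decomposition over pure actions $\delta_j$ --- the same mechanism in slightly different packaging.
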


\begin{proof} Let $(x,a)\in X\times A$ and $b\in B$. Recall that $Q:=Q(x,a,b)$ denotes a distribution in $\De_f(K\times(\De(K)\times C)\times D)$, which can be seen as as an element in $\De^1(K\times C'\times D')$ with $C'=supp(x)\times C$ a finite subset of $\De(K)$ and $D'=D$ We have by definition of the image mapping $\Phi(\cdot)$:
$\ell(x,a,b)=\Phi(Q)=\sum_{d'\in D'} Q(d')\de_{\mathcal{L}_{Q}\left(\mathcal{L}_{Q}(k|c')|d'\right)}.$ 

Similarly to the previous lemma, for every $(c',d')=((p,c),d')\in C' \times D'$, $\mathcal{L}_{Q}\Big(\mathcal{L}_{Q}\big(k|(p,c)\big)|d'\Big)$ does not depend on $b$. Indeed, the signal $(c',d')$ contains the action $(i_1,j_1)=\left(\hat{\imath}(c'),\hat{\jmath}(d')\right)$ and $c'$ contains $d'$ $a.s$. It follows that
\[
\PP_{Q}(k|p,c)=\frac{a(p)[\hat{\imath}(c)]b[\hat{\jmath}(c)]q^{K\times C}\big(p,\hat{\imath}(c),\hat{\jmath}(c)\big)(k,c')}{a(p)[\hat{\imath}(c)]b[\hat{\jmath}(c)]q^C\big(p,\hat{\imath}(c),\hat{\jmath}(c)\big)(c')}=\frac{q^{K\times C}\big(p,\hat{\imath}(c),\hat{\jmath}(c)\big)(k,c')}{q^C\big(p,\hat{\imath}(c),\hat{\jmath}(c)\big)(c')}
\]
and
\[
\PP_{Q}(p,c|d')=\frac{x(p) q^{C}\big(p,a(p),\hat{\jmath}(d')\big)(c)}{\sum_{p\in supp(x)} x(p) q^{D}\big(p,a(p),\hat{\jmath}(d')\big)(d')}.
\]%
Since these quantities do not depend on $b$, we will not precise $b$ in the following. The application $Q(x,a,b)$ being linear in $b$, we can easily deduce the announced result:
 \begin{align*}
\Phi\left(Q\left(x,a,\overline{b}\right)\right)& = \sum_{d'\in D'} Q\left(x,a,\overline{b}\right)(d') \de_{\mathcal{L}_{Q(x,a,.)}\left(\mathcal{L}_{Q(x,a,.)}(k|c')|d'\right)} \\
& = \sum_{d'\in D'} \left( \sum_{s\in S} \lambda_s Q(x,a,b_s)(d')\de_{\mathcal{L}_{Q(x,a,.)}\left(\mathcal{L}_{Q(x,a,.)}(k|c')|d'\right)}\right)\\
&=\sum_{s\in S} \lambda_s \Phi(Q(x,a,b_s).
\end{align*}
\end{proof}  

We now use the two previous lemmas to prove that  player $1$ uniformly guarantees $w^*_\infty\left(\Phi(\pi)\right)$ in the game $\Gamma(\pi)$.

\begin{proposition}\label{prop:p1-guarantee-Gamma} Player 1 uniformly guarantees $w^*_\infty\left(\Phi(\pi)\right)$ in $\Gamma(\pi)$.
\end{proposition}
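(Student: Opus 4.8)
The plan is to have player $1$ mimic in $\Gamma(\pi)$ a uniform $\varepsilon$-optimal strategy of the auxiliary game $\G(\Phi(\pi))$. Fix $\varepsilon>0$. By Proposition \ref{aux_uniforme} there is a pure strategy $\hat{\sigma}$ in $\G(\Phi(\pi))$, depending only on the history of states, that uniformly guarantees $w^*_\infty(\Phi(\pi))-\varepsilon$; write $\hat{\sigma}_t(x_1,\dots,x_t)=a_t\in A(x_t)=\{\,\mathrm{supp}(x_t)\to\Delta(I)\,\}$. I would define the strategy $\sigma^*$ of player $1$ in $\Gamma(\pi)$ as follows: at stage $t$, from $h_t^1$ player $1$ first reconstructs $h_t^2$ (possible because he is more informed, so $d_s=\hat{d}(c_s)$ and each $j_s$ is $d_{s+1}$-measurable), then computes the beliefs $p_t=\CL(k_t\mid h_t^1)$ and $x_1,\dots,x_t$ with $x_s=\CL(p_s\mid h_s^2)$ — which by Lemma \ref{lem:p1-calculate-belief} are well-defined functions of the observed signals, independent of player $2$'s strategy — and finally plays the mixed action $a_t(p_t)\in\Delta(I)$ with $a_t=\hat{\sigma}_t(x_1,\dots,x_t)$. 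As $p_t$ and $x_1,\dots,x_t$ are functions of $h_t^1$, this $\sigma^*$ is a genuine behavior strategy.

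Next I would record the payoff identity. Since $g$ vanishes on active states, $\CL(k_t\mid h_t^1)=p_t$ and $\CL(p_t\mid h_t^2)=x_t$, for every $\tau\in\T$ and every $t$,
\[
\EE^\pi_{\sigma^*,\tau}[g(k_t)]=\EE^\pi_{\sigma^*,\tau}\big[\,\EE^\pi_{\sigma^*,\tau}[g(k_t)\mid h_t^2]\,\big]=\EE^\pi_{\sigma^*,\tau}[G(x_t)],
\]
so that $\gamma_n(\pi,\sigma^*,\tau)=\EE^\pi_{\sigma^*,\tau}\big[\tfrac1n\sum_{t=1}^n G(x_t)\big]$, and the same identity holds for the $n$-stage payoff of $\G$ along its state process. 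It is therefore enough to compare the laws of the state process $(x_t)_t$ in the two games.

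The core step is to show that, under $(\sigma^*,\tau)$, the process $(x_t)_t$ has the same law as the state process of $\G(\Phi(\pi))$ under $\hat{\sigma}$ against a suitable player-$2$ strategy $\tilde{\tau}$. I would argue by induction on $t$: the law of $x_1$ is $\Phi(\pi)$ in both games; and conditionally on $h_t^2$ the joint law of $(p_t,k_t)$ is the canonical one ($p_t\sim x_t$, $k_t\mid p_t\sim p_t$), depending on $h_t^2$ only through $x_t$. Hence, when player $1$ plays $a_t$ and player $2$ plays a mixed action $b$, the conditional law of $x_{t+1}$ is exactly $\ell(x_t,a_t,b)=\Phi(Q(x_t,a_t,b))$, by the definition of the transition of $\G$ and the identification of the one-stage situation with $\hat{\Gamma}(\de_{x_t})$. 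Player $2$'s action in $\Gamma$ may depend on $h_t^2$ finer than $x_t$; but setting $\tilde{\tau}_t(x_1,\dots,x_t):=\bar{b}_t$, the $\PP^\pi_{\sigma^*,\tau}$-conditional mean given $(x_1,\dots,x_t)$ of player $2$'s realized mixed action, the linearity of $\ell$ in $b$ (Lemma \ref{lem:ell-b}) gives
\[
\EE^\pi_{\sigma^*,\tau}\big[\ell(x_t,a_t,b(h_t^2))\mid x_1,\dots,x_t\big]=\ell\big(x_t,a_t,\bar{b}_t\big),
\]
which is precisely the transition produced by $\tilde{\tau}$ in $\G$. Matching initial laws and one-step conditional laws closes the induction, giving $\gamma_n(\pi,\sigma^*,\tau)=\gamma^{\G}_n(\Phi(\pi),\hat{\sigma},\tilde{\tau})$.

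To conclude, since $\hat{\sigma}$ uniformly guarantees $w^*_\infty(\Phi(\pi))-\varepsilon$ against every player-$2$ strategy in $\G(\Phi(\pi))$, there is $N_0$ independent of $\tau$ with $\gamma^{\G}_n(\Phi(\pi),\hat{\sigma},\tilde{\tau})\geq w^*_\infty(\Phi(\pi))-\varepsilon$ for all $n\geq N_0$; combined with the identity above this yields $\gamma_n(\pi,\sigma^*,\tau)\geq w^*_\infty(\Phi(\pi))-\varepsilon$ for every $\tau$ and every $n\geq N_0$, i.e. player $1$ uniformly guarantees $w^*_\infty(\Phi(\pi))$. The main obstacle is the inductive identification of the belief dynamics: one must verify that conditioning on the current second-order belief $x_t$ genuinely reduces the one-stage interaction to the canonical game $\hat{\Gamma}(\de_{x_t})$ — so that the transition depends on player $2$'s behavior only through its conditional mean action — and the linearity Lemma \ref{lem:ell-b} is exactly what makes player $2$'s extra information harmless.
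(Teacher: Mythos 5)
Your proof is correct and follows essentially the same route as the paper: you mimic a pure, state-dependent uniform $\varepsilon$-optimal strategy of $\G(\Phi(\pi))$ via the beliefs computable by Lemma \ref{lem:p1-calculate-belief}, map an arbitrary $\tau$ to the auxiliary-game strategy given by the conditional average of player 2's actions on each belief path (the paper's $\hat{\tau}$), and match the laws of $(x_t)$ by induction using the linearity of $\ell$ in $b$ (Lemma \ref{lem:ell-b}) before concluding with the payoff identity $\EE^\pi_{\sigma^*,\tau}[g(k_t)]=\EE[G(x_t)]$. This is precisely the paper's three-step argument (Steps I--III and Lemma \ref{lem:law-belief}), so nothing is missing.
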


\begin{proof} 
Fix any $\varepsilon>0$. We divide the proof into three steps. {\color{jaune} First, we define the optimal strategy $\hat{\sigma}$ in $\Gamma(\pi)$. Then we show how to link the distribution over the states in $\G$ to the distribution of second-order beliefs in $\Gamma$}. Finally, we deduce that the strategy $\hat{\sigma}$ is uniform $\varepsilon$-optimal.\\

\noindent \underline{Step I: Defining the strategy.}\\
 
Consider the auxiliary game $\G(z)$ with $z=\Phi(\pi)\in\De_f(X)$. According to Proposition \ref{aux_uniforme}, player 1 has pure uniform $\varepsilon$-optimal strategies which depend on histories only through the states but not the actions. {\color{violet2} With a slight abuse of notations,} there exists $\hat{\sigma}^*:\bigcup_{t=1}^\infty X^t\to A=\{a:\De(K)\to\De(I)\}$ and $N_0\geq 1$ such that
\[\hat{\gamma}_n(z,\hat{\sigma}^*,\hat{\tau})\geq w^*_\infty(z)-\varepsilon \text{ for all $n\geq N_0$ and for all } \hat{\tau}:\bigcup_{t=1}^\infty X^t\to B=\De(J)\]
where $\hat{\gamma}_n(z,\hat{\sigma}^*,\hat{\tau})$ is the expected $n$-stage average payoff in the auxiliary game $\G(z)$ induced by $(z,\hat{\sigma}^*, \hat{\tau})$.

We define the strategy $\sigma^*\in \Sigma$ in the game $\Gamma(\pi)$ such that for any $h_t^1$,
\[
\sigma^*(h^1_t)=\hat{\sigma}^*(x_1,...,x_t)[p_t]
\text{\ with\ } p_t=\mathcal{L}_{\PP^\pi_{\sigma^*}}(k_t|h_t^1) \text{ and } x_t=\mathcal{L}_{\PP^\pi_{\sigma^*}}(p_t|h_t^2).\] 
By Lemma \ref{lem:p1-calculate-belief}, this is a well defined strategy of player $1$ since he can compute $p_t$ and $x_t$ at every stage $t\geq 1$. We now check that the strategy $\sigma^*$ uniformly guarantees $w_\infty^*(z)-\varepsilon$ in $\Gamma(\pi)$. \\

\noindent \underline{Step II: Linking the probability law of beliefs}\\

{\color{jaune} Let $\tau \in \T$ be a strategy in $\Gamma(\pi)$. We define a strategy $\hat{\tau}$ in $\G\left(\Phi(\pi)\right)$ such that $(\pi,\sigma^*,\tau)$ and $(\Phi(\pi),\hat{\sigma},\hat{\tau})$ generate the same probability law for $(x_1,...,x_t,...)$.} 
With a slight abuse in notation, we denote by $\hat{\tau}$  the strategy in $\G$  such that for all $(x_1,...,x_t)\in X^t$,
\[
\hat{\tau}(x_1,...,x_t)=\sum_{h_t^2\in H_t^2(x_1,...,x_t)} \PP^\pi_{\sigma^*,\tau}(h_t^2|x_1,...,x_t)\tau(h_t^2),
\]
where $H_t^2(x_1,...,x_t)=\{h_t^2\in H_t^2|\mathcal{L}_{\PP^\pi_{\sigma^*,\tau}}(k_l|h_\ell^2)=x_\ell,\ 1\leq  \ell \leq t\}$
denotes the set of player 2's $t$-stage histories in $\Gamma$ that induce the beliefs $(x_1,...,x_t)$. 

\begin{lemme}\label{lem:law-belief} Let $\sigma^*$ and $\hat{\tau}$ be constructed as above given $\hat{\sigma}^*$ and $\tau$, we have:  
\[\forall t\geq 1, \ \mathcal{L}_{\PP^\pi_{\sigma^*,\tau}}(x_1,...,x_t)=\mathcal{L}_{\PP^z_{\hat{\sigma}^*,\hat{\tau}}}(x_1,...,x_t).
\]
\end{lemme}

\noindent \textbf{Proof of Lemma \ref{lem:law-belief}:} \  We prove the lemma by induction on $t\geq 1$. For $t=1$, the law of $x_1$ is independent of the strategy profile. By definition of the image mapping $\Phi(\cdot)$,
\[
\mathcal{L}_{\PP^\pi_{\sigma^*,\tau}}(x_1)=\mathcal{L}_\pi\left(\mathcal{L}_{\pi}\big(\mathcal{L}_\pi(k_1|c_1)|d_1\big)\right)=\Phi(\pi).
\]
As $\Phi(\pi)=z$, the probability law to choose the initial state $x_1\in X$ in $\G(z)$,   $\mathcal{L}_{\PP^z_{\hat{\sigma}^*,\hat{\tau}}}(x_1)=\Phi(\pi)$.

Suppose now that we have proved that $\mathcal{L}_{\PP^\pi_{\sigma^*,\tau}}(x_1,...,x_t)=\mathcal{L}_{\PP^z_{\hat{\sigma}^*,\hat{\tau}}}(x_1,...,x_t)$ for some $t\geq 1$. It is then sufficient to prove that conditional on any realization\footnote{For this part of the proof it is convenient to differentiate the random variable describing the second order belief (or the state in $\G$) that will be denoted by $x_t$ from its realization denoted by $\tilde{x}_t$.} $\tilde{s}_t:=(\tilde{x}_1,...,\tilde{x}_t)\in \left(B_2\right)^t$, $$\mathcal{L}_{\PP^\pi_{\sigma^*,\tau}}(x_{t+1}|x_1=\tilde{x}_1,...,x_t=\tilde{x}_t)=\mathcal{L}_{\PP^z_{\hat{\sigma}^*,\hat{\tau}}}(x_{t+1}|x_1=\tilde{x}_1,...,x_t=\tilde{x}_t).$$ 
{\color{jaune}
\noindent Fix some $\tilde{s}_t=(\tilde{x}_1,...,\tilde{x}_t)\in X^t$. By definition of  $\hat{\tau}$ and the linearity of $\ell$ showed in Lemma \ref{lem:ell-b}, we know that
\begin{equation}\label{eq:law-x_t-1}
\begin{aligned}
\mathcal{L}_{\PP^z_{\hat{\sigma}^*,\hat{\tau}}}(x_{t+1}|x_1=\tilde{x}_1,...,x_t=\tilde{x}_t) &= \ell(\tilde{x}_t,\hat{\sigma}^*(\tilde{s}_t), \hat{\tau}(\tilde{s}_t))\\
& = \sum_{h_t^2\in H_t^2(\tilde{s}_t)}\PP^\pi_{\sigma^*,\tau}(h_t^2|\tilde{s}_t) \ell\big(\tilde{x}_t,\hat{\sigma}^*(\tilde{s}_t), \tau(h_t^2)\big) .
\end{aligned}
\end{equation} 
By definition of the conditional expectation, we have in $\Gamma$,
\[
\mathcal{L}_{\PP^\pi_{\sigma^*,\tau}}(x_{t+1}|x_1=\tilde{x}_1,...,x_t=\tilde{x}_t) =\sum_{h_t^2\in H_t^2(\tilde{s}_t)}\PP_{\sigma^*,\tau}^\pi(h_t^2|\tilde{s}_t)\mathcal{L}_{\PP^\pi_{\sigma^*,\tau}}(x_{t+1}|h_t^2).
\]
Thus, it is sufficient to prove that for every $h_t^2\in H^2_t(\widetilde{s}_t)$, $\ell\big(\tilde{x}_t,\hat{\sigma}^*(\tilde{s}_t), \tau(h_t^2)\big)= \mathcal{L}_{\PP^\pi_{\sigma^*,\tau}}(x_{t+1}|h_t^2)$.
Let $h_t^2\in H_t^2(\tilde{x}^t)$ and $Q[h_t^2]:=Q\left(\tilde{x}_t,\hat{\sigma}^*(\tilde{s}_t), \tau(h_t^2)\right)\in\De_f\left(K\times(\De(K)\times C)\times D\right)$ the joint distribution of $\left(k_{t+1}, (p_t,c_{t+1}), d_{t+1}\right)$ in the canonical game $\hat{\Gamma}(\de_{\tilde{x}_t})$ when $\left(\hat{\sigma}^*(\tilde{s}_t),\tau(h_t^2)\right) \in A\times B$ is played. By definition of the image mapping $\Phi(\cdot)$ and $\sigma^*$, we obtain
\begin{align*}
\mathcal{L}_{\PP^\pi_{\sigma^*,\tau}}(x_{t+1}|h_t^2)&= \mathcal{L}_{Q[h_t^2]}\left(\mathcal{L}_{Q[h_t^2]}(\mathcal{L}_{Q[h_t^2]}(k_{t+1}|c_{t+1})|d_{t+1})\right) =\Phi(Q[h_t^2])=\ell\big(\tilde{x}_t,\hat{\sigma}^*(\tilde{s}_t), \tau(h_t^2)\big).
\end{align*}
$\hfill\Box$}


\noindent \underline{Step III: Conclusion of the proof}\\ 

Finally, let us compare the payoffs in both games. If $k^* \in K^*$, we have $G(k^*)=g(k^*)=\EE^\pi_{\sigma^*,\tau}[g(k_t)|x_t=k^*]$. If $x_t\in \big(\De_f\left(\De(K^0)\right)\big)$, we have $G(x_t)=0=\EE^\pi_{\sigma^*,\tau}[g(k_t)|x_t]$. It follows that for every $x_t\in X$, we have $G(x_t)=\EE^\pi_{\sigma^*,\tau}\left [g(k_t)|x_t\right]$. By taking conditional expectation, Lemma \ref{lem:law-belief} implies that $\EE^z_{\hat{\sigma}^*,\hat{\tau}}[G(x_t)]=\EE^\pi_{\sigma^*,\tau}[g(k_t)]$. Since $\hat{\sigma}^*$ is uniform $\varepsilon$-optimal in the auxiliary game $\G\left(\Phi(\pi)\right)$, we obtain
$$\gamma_n(\pi, \sigma^*, \tau) = \hat{\gamma}_n(\Phi(\pi), \hat{\sigma}^*, \hat{\tau})\geq w^*_\infty(\Phi(\pi))-\varepsilon \text{ for all }n\geq N_0.$$
Therefore, the strategy $\sigma^*$ uniformly guarantees $w^*_\infty(\Phi(\pi))-\varepsilon$ in $\Gamma(\pi)$.
\end{proof}\\

\subsubsection{Player $2$ uniformly defends $w_\infty^*$}

We now prove that player $2$ can defend $w^*_\infty(\Phi(\pi))=\lim v_n(\pi)=\lim v_\lambda(\pi)$. The situation of player $2$ is different since he is allowed to know the strategy of player $1$. In order to prove this result, we introduce another auxiliary recursive game {\color{jaune} $\mathcal{R}$}.\\

For any $n\geq 1$, let $H'_n\subseteq H_n$ be the set of $n$-stage histories such that player 1 can deduce player 2's private signals, and $H_n^0\subseteq H_n$ be the set of $n$-stage histories containing only non-absorbing states. We consider the following game  {\color{jaune} $\mathcal{R}$} where the set of states is almost the set of distribution over {\color{jaune} all finite histories}. It is defined as follows:


\begin{itemize}
\item the state space is $Z=Z_0 \bigcup K^*$ where $Z_0=\bigcup_{n\geq 1} \De(H_n^0\cap H'_n)$,
\item the action space of player $1$ is $A=\bigcup_{n\geq 1} \{f:H^1_n \rightarrow \Delta(I) \}$ and for any $\pi_n \in \Delta(H_n)$, $A(\pi_n)= \{f:H^1_n \rightarrow \Delta(I)\}$,
\item the action space of player $2$ is $B=\bigcup_{n\geq 1} \{f:H^2_n \rightarrow \Delta(J) \}$ and for any $\pi_n \in \Delta(H_n)$, $B(\pi_n)= \{f:H^2_n \rightarrow \Delta(J)\}$,
\item the transition $Q:Z\times A \times B \rightarrow \Delta_f(Z)$ is given by:
\[
\forall  (k^*,a,b) \in K^*\times A\times B,\  Q(k^*,a,b)=\delta_{k^*},
\]
and 
\[ { \color{jaune} \forall  (z,a,b)\in Z_0\times A \times B},\ 
Q(z,a,b)= {\color{jaune}Q^0(z,a,b)} \delta_{\pi^0}+ \sum_{k\in K^*} Q(z,a,b)(k^*)\delta_{k^*},
\]
where $Q(z,a,b)(k^*)$ is the probability of absorption  {\color{jaune} in state $k^*$ at the next stage given by}
\[
 Q(z,a,b)(k^*)= \sum_{h_n,i,j,c,d} z(h_n)a(h^1_n)[i]b(h^2_n)[j]q(k_n,i,j)(k^*,c,d){\color{jaune};}
\]
{\color{jaune}$Q^0(z,a,b)$} is the probability of no absorption given by
\[
{\color{jaune}Q^0(z,a,b)}= \sum_{h_n,i,j,c,d} \sum_{k\in K^0} z(h_n)a(h^1_n)[i]b(h^2_n)[j]q(k_n,i,j)(k,c,d),
\]
and $\pi^0 \in Z_0$ is the {\color{jaune} conditional probability on not having absorbed, $i.e.$,}
\[
{\color{jaune}\forall (h_n,k,i,j,c,d) \in H_n\times K\times I \times J \times C \times D},\ \pi^0(h_n,i,j,c,d)=\frac{z(h_n)a(h^1_n)[i]b(h^2_n)[j]q(k_n,i,j)(k,c,d)}{Q^0(z,a,b)}.
\]

\item the stage payoff function $R:Z\times A\times B\to[-1,+1]$ is given by
\[
\forall (\pi_n,a,b)\in Z_0\times A \times B, \ R(\pi_n,a,b)=0,
\]
and
\[
\forall (k^*,a,b)\in K^*\times A \times B, \ R(k^*,a,b)=g(k^*).
\]
\end{itemize}

By construction, the game $\mathcal{R}$ is recursive. We denote by {\color{jaune} $\wSigma$ (resp. $\wT$) the set of behavior strategy for player 1 (resp. for player 2) } in the game $\mathcal{R}$. 

\begin{proposition}\label{aux_pass}
For every $\pi \in Z_0$ and every $n\geq 1$, the $n$-stage game $\mathcal{R}_n(\pi)$ has a value in history independent pure strategies, which is denoted by $\widetilde{v}_n(\pi)$ and
\[
\widetilde{v}_n(\pi)=v_n(\pi).
\]
Moreover, if player $2$ can uniformly defend some payoff level $v$ in the game $\mathcal{R}(\pi)$ with pure strategies then he can also uniformly defend $v$ in the game $\Gamma(\pi)$.
\end{proposition}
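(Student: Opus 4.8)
The plan is to set up a payoff-preserving dictionary between $\Gamma(\pi)$ and the auxiliary game $\mathcal{R}(\pi)$ and then read off all three assertions from it. The key structural fact, built into the definition of $\mathcal{R}$, is that the transition $Q$ is nothing but the Bayesian update of the conditional law of the history given non-absorption: if at stage $s$ the state of $\mathcal{R}$ is $z_s\in\Delta(H^0_s\cap H'_s)$ and the players use the decision rules $a_s\colon H^1_s\to\Delta(I)$ and $b_s\colon H^2_s\to\Delta(J)$, then the probability of absorbing into $k^*$ coincides with the probability that $k_{s+1}=k^*$ in $\Gamma$, and conditionally on non-absorption the new state $\pi^0$ equals $\mathcal{L}(h_{s+1}\mid\{k_{s+1}\in K^0\})$. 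First I would record this as a correspondence lemma: a behavior strategy $\sigma=(\sigma_s)_s$ of player $1$ in $\Gamma$ is identified with the history-independent pure strategy $\hat\sigma$ of $\mathcal{R}$ given by $\hat\sigma(z)=\sigma_{|z|}$, where $|z|$ is the common length of the histories in the support of $z$ (and symmetrically for player $2$); under this identification the induced laws of the state sequence and of the absorption time coincide in the two games. Since the stage payoff vanishes before absorption and equals $g(k^*)$ afterwards in both games, this gives $\gamma_n(\pi,\sigma,\tau)=\widetilde{\gamma}_n(\pi,\hat\sigma,\hat\tau)$ for every $n$ and every corresponding pair.

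Granting the correspondence, the equality $\widetilde{v}_n(\pi)=v_n(\pi)$ and the attainment of the value in history-independent pure strategies follow together. Taking an optimal behavior strategy of player $1$ in the finite game $\Gamma_n(\pi)$ and translating it produces a history-independent pure strategy in $\mathcal{R}_n(\pi)$ guaranteeing $v_n(\pi)$, because any strategy of player $2$ in $\mathcal{R}$, even history-dependent, restricts along the deterministic non-absorbed trajectory to a sequence of decision rules, hence to a behavior strategy of player $2$ in $\Gamma$; a symmetric argument for player $2$ shows the upper value is at most $v_n(\pi)$. Since the lower value never exceeds the upper value, $\mathcal{R}_n(\pi)$ has a value equal to $v_n(\pi)$, attained in history-independent pure strategies. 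Alternatively I would argue by induction on $n$ through the Shapley equation, the base case being $\widetilde{v}_1=v_1$ (equal to $0$ on $Z_0$ and to $g(k^*)$ on $K^*$) and the inductive step matching the one-stage operators precisely because $Q$ reproduces the belief dynamics of $\Gamma$.

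For the last assertion, assume player $2$ uniformly defends $v$ in $\mathcal{R}(\pi)$ with pure strategies, and fix $\varepsilon>0$ together with a strategy $\sigma\in\Sigma$ of player $1$ in $\Gamma(\pi)$. I would translate $\sigma$ into the history-independent pure strategy $\hat\sigma$ of $\mathcal{R}$ as above, use the defending property to obtain a pure $\hat\tau^*$ and some $n_0$ with $\widetilde{\gamma}_n(\pi,\hat\sigma,\hat\tau^*)\leq v+\varepsilon$ for all $n\geq n_0$, and translate $\hat\tau^*$ back to a behavior strategy $\tau^*$ of player $2$ in $\Gamma$ by reading the decision rules $\tau^*_s=\hat\tau^*(z_1,\dots,z_s)$ off the deterministic non-absorbed trajectory; player $2$ can follow this since the pre-absorption states are common knowledge once $\sigma$ is fixed, and he observes when absorption occurs. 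The correspondence then yields $\gamma_n(\pi,\sigma,\tau^*)=\widetilde{\gamma}_n(\pi,\hat\sigma,\hat\tau^*)\leq v+\varepsilon$ for all $n\geq n_0$, which is exactly defending $v$ in $\Gamma(\pi)$.

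The main obstacle is the correspondence lemma itself, and especially checking that $\mathcal{R}$ faithfully encodes the information of $\Gamma$. One must verify that conditioning on non-absorption yields precisely $\pi^0$, that restricting the state space to $Z_0=\bigcup_n\Delta(H^0_n\cap H'_n)$ together with the hypothesis that player $1$ is more informed makes $z_s$ a legitimate common-knowledge state while keeping each player's effective information equal to his own private history (player $1$ deduces $h^2_s$ from $h^1_s$, and player $2$'s action stays a function of $h^2_s$ alone), and that the translations of strategies respect the information partitions in both directions. Once this bookkeeping is in place, the three statements are immediate.
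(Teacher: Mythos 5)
Your proposal is correct and follows essentially the same route as the paper: identify behavior strategies in $\Gamma$ with history-independent pure strategies in $\mathcal{R}$ (payoffs coincide by the definition of $Q$ and $R$), then exploit the fact that against a fixed history-independent pure strategy the non-absorbed trajectory in $\mathcal{R}$ is deterministic, so a pure (possibly history-dependent) strategy of the opponent in $\mathcal{R}$ reads off as a behavior strategy in $\Gamma$ — this yields both $\widetilde{v}_n(\pi)=v_n(\pi)$ and the transfer of the defending property. The only caution is that your phrase ``any strategy of player 2 in $\mathcal{R}$, even history-dependent'' should be restricted to \emph{pure} strategies (as the paper does), since under randomization the trajectory is no longer deterministic; handling behavior strategies then requires the standard reduction of a one-player finite-horizon problem to pure strategies, a step the paper also leaves implicit.
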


\begin{proof}
First, a strategy $\sigma$ of player $1$ in $\Gamma$ is a sequence of applications $(\sigma_n)_{n\geq 1}$ such that $\sigma_n$ is a mapping from $H_1^n$ to $\Delta(I)$.  By definition, this is a sequence of actions in the game $\mathcal{R}$, $i.e.$, a history independent pure strategy in $\mathcal{R}$. Similarly, a strategy $\tau$ of player $2$ in $\Gamma$ induces a sequence of actions in $\mathcal{R}$. By definition of $Q$ and $R$, it follows that for every $\pi\in Z_0$, $\sigma\in \Sigma$ and $\tau \in \T$,
\begin{align}
\gamma_n(\pi,\sigma,\tau)=\widetilde{\gamma}_n(\pi,\sigma,\tau).
\end{align}

Let $\sigma$ be an optimal strategy of player $1$ in the game $\Gamma_n(\pi)$. Consider now a pure strategy $\wtau\in\wT$. The triple $(\pi,\sigma,\wtau)$ {\color{jaune} generates a probability distribution $\PP$ on $(Z\times A \times B)^{\NN}$ such that there exists at most one play $(\pi_t,a_t,b_t)_{t\geq 1}$ that is non absorbing $\PP-a.s.$, $i.e.$,  $(\pi_t,a_t,b_t)_{t\geq 1}\in (Z_0 \times A\times B)^{\NN}$.} Define the strategy $\tau \in \T$ of player $2$ in $\Gamma(\pi)$ by setting $\tau_t=b_t$ for all $t\geq 1$. We obtain
\[
\widetilde{\gamma}_n(\pi,\sigma,\wtau)=\widetilde{\gamma}_n(\pi,\sigma,\tau) = \gamma_n(\pi,\sigma,\tau) \geq v_n(\pi).
\]
Therefore, player 1 guarantees the payoff $v_n(\pi)$ in $\mathcal{R}(\pi)$ with the history independent pure strategy $\sigma$. Similarly, player $2$ can guarantee $v_n(\pi)$ with a history independent pure strategy and $\widetilde{v}_n(\pi)=v_n(\pi)$.\\

{\color{jaune} \noindent Finally, let us assume that player $2$ can uniformly defend the payoff level $v$ with pure strategies in the game $\mathcal{R}(\pi)$. Let $\varepsilon>0$ and $\sigma\in \Sigma$. Interpreting $\sigma$ as an history-independent strategy in $\mathcal{R}$, there exist $N_0 \geq 1$ and a pure strategy $\wtau \in \wT$ such that
\begin{align}\label{audessus}
\forall n\geq N_0, \ \widetilde{\gamma}_n(\pi,\sigma,\wtau)\leq v+\varepsilon.
\end{align}

As in the previous paragraph, we can associate to the triple $(\pi,\sigma,\wtau)$ a unique play $(\pi_t,a_t,b_t)_{t\geq 1}$ in $(Z_0 \times A\times B)^{\NN}$ and define the strategy $\tau \in \T$ of player $2$ in $\Gamma(\pi)$ by setting $\tau_t=b_t$ for all $t\geq1$. We obtain
\[
\forall n\geq N_0,\ \gamma_n(\pi,\sigma,\tau)=\widetilde{\gamma}_n(\pi,\sigma,\tau)=\widetilde{\gamma}_n(\pi,\sigma,\wtau) \leq v+\varepsilon.
\]
This proves that player $2$ can uniformly defend $v$ in $\Gamma(\pi)$.
}
\end{proof}

\hspace{2mm}

We conclude by showing that the game $\mathcal{R}$ fulfills the conditions of Corollary \ref{theo3}.

\begin{proposition}\label{prop: p2-defend-Gamma} Player 2 uniformly defends $w_\infty^*\left(\Phi(\pi)\right)$ in $\Gamma(\pi)$.
\end{proposition}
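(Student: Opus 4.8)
The plan is to show that the auxiliary recursive game $\mathcal{R}$ satisfies the hypotheses of Corollary \ref{theo3}, and then to transport the resulting uniform value of $\mathcal{R}$ back to $\Gamma$ via the second assertion of Proposition \ref{aux_pass}. By Proposition \ref{aux_pass}, for every $\pi\in Z_0$ and every $n\geq 1$ the $n$-stage game $\mathcal{R}_n(\pi)$ has a value $\widetilde v_n(\pi)$ in history-independent pure strategies, so both players possess pure optimal strategies in each $\mathcal{R}_n$; on each absorbing state $k^*\in K^*$ the value is the constant $g(k^*)$. Hence the only hypothesis of Corollary \ref{theo3} left to check is that the family $\{\widetilde v_n,\,n\geq 1\}$ is totally bounded for the uniform norm on the whole state space $Z=Z_0\cup K^*$.

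For this I would use the factorization $\widetilde v_n(\pi)=v_n(\pi)=\hat v_n\big(\Phi(\pi)\big)$ for $\pi\in Z_0$, where $\Phi:Z_0\to\De_f(B_2)$ is the image mapping. Since $\hat v_n$ is linear on $\De_f(B_2)$ and, restricted to $B_2$, uniformly $1$-Lipschitz for the Wasserstein metric (Proposition \ref{canlip}), the family $\{\hat v_n\}$ is totally bounded on $B_2$ by Arzela-Ascoli; writing any $\eta\in\De_f(B_2)$ as a finite convex combination of Dirac masses gives $|\hat v_n(\eta)-\hat v_m(\eta)|\leq\|\hat v_n-\hat v_m\|_\infty$, which extends the total boundedness from $B_2$ to all of $\De_f(B_2)$. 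Composing with the fixed map $\Phi$ can only decrease sup-distances, $\|\widetilde v_n-\widetilde v_m\|_{\infty,Z_0}\leq\|\hat v_n-\hat v_m\|_\infty$, so $\{\widetilde v_n\}$ is totally bounded on $Z_0$; adjoining the finitely many constant values on $K^*$ leaves it totally bounded on $Z$.

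With all hypotheses verified, Corollary \ref{theo3} (applied with infinite action sets, as permitted by Remark \ref{theo3_com}) shows that $\mathcal{R}$ has a uniform value in pure strategies, which player $2$ can uniformly guarantee with pure strategies depending only on the history of states. By Remark \ref{rem:uniformcon} this uniform value is the uniform limit of $\widetilde v_n$, and pointwise $\lim_n\widetilde v_n(\pi)=\lim_n v_n(\pi)=w^*_\infty\big(\Phi(\pi)\big)$ by Proposition \ref{prop:asymptotic}. Thus player $2$ uniformly guarantees $w^*_\infty\big(\Phi(\pi)\big)$ in $\mathcal{R}(\pi)$ with pure strategies; since uniform guaranteeing is stronger than uniform defending, he in particular uniformly defends $w^*_\infty\big(\Phi(\pi)\big)$ in $\mathcal{R}(\pi)$. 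Invoking the second assertion of Proposition \ref{aux_pass} then gives that player $2$ uniformly defends $w^*_\infty\big(\Phi(\pi)\big)$ in $\Gamma(\pi)$, which is the claim.

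The step I expect to be the main obstacle is the total boundedness of $\{\widetilde v_n\}$ on the vast state space $Z$ of distributions over histories, where no direct equicontinuity argument is available. The crux is to recognize that $\widetilde v_n$ depends on the state only through $\Phi$ and equals $\hat v_n\circ\Phi$, reducing the question to the already-established total boundedness of $\{\hat v_n\}$ on the totally bounded belief space $B_2$; once this factorization is in place the remainder is routine bookkeeping.
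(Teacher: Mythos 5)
Your proposal is correct and follows essentially the same route as the paper's own proof: both establish the factorization $\widetilde v_n(\pi)=v_n(\pi)=\hat v_n\big(\Phi(\pi)\big)$ via Proposition \ref{aux_pass}, deduce total boundedness of $\{\widetilde v_n\}$ from that of $\{\hat v_n\}$ and its linear extension to $\De_f(B_2)$ (Proposition \ref{canlip}), apply Corollary \ref{theo3} to give $\mathcal{R}(\pi)$ a uniform value $w^*_\infty\big(\Phi(\pi)\big)$ in pure strategies, and transport the defense back to $\Gamma(\pi)$ via the second assertion of Proposition \ref{aux_pass}. Your write-up merely makes explicit some steps the paper leaves implicit (the passage from $B_2$ to $\De_f(B_2)$, the identification of the uniform value of $\mathcal{R}$ via Proposition \ref{prop:asymptotic}, and the fact that guaranteeing implies defending), which is a faithful filling-in rather than a different argument.
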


\begin{proof}
We already noticed that the game $\mathcal{R}$ is recursive. Let $\pi\in \De(H^0_n\cap H'_n)\subseteq Z_0$ for some $n\geq 1$. Since player 1 is more informed than player 2 ($\pi$ supported on $H'_n$), $\pi$ can be identified as an element in $\De^1(K\times C'\times D')$ for some finite $C'$ and $D'$. By Proposition \ref{aux_pass}, we obtain that for any $\pi\in Z_0$,
\[
\widetilde{v}_n(\pi)=v_n(\pi)=\hat{v}_n\left(\Phi(\pi)\right).
\]


According to Corollary \ref{canlip}, the family $\{\hat{v}_n, n\geq 1\}$ considered as functions on $B_2$ is totally bounded, and so is the family of their linear extensions to $\De_f(B_2)$.

By Corollary \ref{theo3}, $\mathcal{R}(\pi)$ has a uniform value $w^*_\infty\left(\Phi(\pi)\right)$ in pure strategies for every $\pi\in\De^1(K\times C\times D)$. It follows from Proposition \ref{aux_pass} that player $2$ can uniformly defend $w^*_\infty\left(\Phi(\pi)\right)$ in $\Gamma(\pi)$.
\end{proof} \\ 

~~ \\ 

\bigskip 

\textbf{Acknowledgements} \ The authors thank Sylvain Sorin for his careful reading of earlier versions of this paper, whose comments have significantly improved its presentation. The authors also thank an associated editor and an anonymous referee for their numeruous helpful remarks. The authors gratefully acknowledge the support of the Agence Nationale de la Recherche, under grant ANR JEUDY, ANR-10-BLAN 0112. \\

\bibliographystyle{plain}
\bibliography{biblio_info_commune}

\end{document}